\newtheorem{theorem}{Theorem}
\newtheorem{lemma}{Lemma}
\newtheorem{proposition}{Proposition}
\newtheorem{definition}{Definition}
\newtheorem{remark}{Remark}
\newtheorem{assumption}{Assumption}
\newtheorem{example}{Example}
\newcommand{\ba}{\begin{array}}
\newcommand{\ea}{\end{array}}
\newcommand{\be}{\begin{equation}}
\newcommand{\ee}{\end{equation}}
\newcommand{\RNum}[1]{\lowercase\expandafter{\romannumeral #1\relax}}
\newcommand{\RNumU}[1]{\uppercase\expandafter{\romannumeral #1\relax}}
\begin{document}
%
\title{\huge Group zero-norm regularized robust loss minimization: proximal MM method and statistical error bound
}

%
%
%

\author{Ling Liang$^{1,2}$, Shujun Bi$^{2}$
\IEEEcompsocitemizethanks{ This work is supported by the National Natural Science Foundation of China (No.12371323).\\
1. School of General Education, Guangzhou College of Technology and Business, Guangzhou, China. \\
2. School of Mathematics, South China University of
Technology, Guangzhou, China. \\
 (e-mail: liangling@gzgs.edu.cn; bishj@scut.edu.cn).}}

\markboth{Journal of \LaTeX\ Class Files,~Vol.~ , No.~ ,  ~ }%
{Shell \MakeLowercase{{\em et al.}}: Group zero-norm regularized robust loss minimization:proximal MM method and statistical error bound}
%



\maketitle

\begin{abstract}
This study focuses on solving group zero-norm regularized robust loss minimization problems. We propose a proximal Majorization-Minimization (PMM) algorithm to address a class of equivalent Difference-of-Convex (DC) surrogate optimization problems. First, we present the core principles and iterative framework of the PMM method. Under the Kurdyka-{\L}ojasiewicz (KL) property assumption of the potential function, we establish the global convergence of the algorithm and characterize its local (sub)linear convergence rate. Furthermore, for linear observation models with design matrices satisfying restricted eigenvalue conditions, we derive statistical estimation error bounds between the PMM-generated iterates (including their limit points) and the ground truth solution. These bounds not only rigorously quantify the approximation accuracy of the algorithm but also extend previous results on element-wise sparse composite optimization from reference [57]. To efficiently implement the PMM framework, we develop a proximal dual semismooth Newton method for solving critical subproblems. Extensive numerical experiments on both synthetic data and the UCI benchmark demonstrate the superior computational efficiency of our PMM method compared to the proximal Alternating Direction Method of Multipliers (pADMM).
\end{abstract}
\begin{IEEEkeywords}
group zero-norm,  KL property, PMM,  semismooth Newton
\end{IEEEkeywords}

%
\IEEEpeerreviewmaketitle

\section{Introduction}
\IEEEPARstart{G}{}roup sparse optimization extends traditional sparse modeling by incorporating structural sparsity constraints. Whereas conventional approaches focus on element-wise sparsity through $\ell_1$-norm regularization, modern applications increasingly require group-wise selection mechanisms to handle clustered variable structures-a critical capability absent in classical formulations. This paradigm shift has positioned group sparse optimization as a pivotal methodology in high-dimensional statistics.

\subsection{Motivation}

Consider a feature space partitioned by disjoint index sets $\bigcup_{i=1}^mJ_i$ satisfying $\bigcup_{i=1}^mJ_i=\{1,2,\ldots,p\}$ with $J_i\cap J_j=\emptyset$ for all $i\neq j\in\{1,2,\ldots,m\}$.  The fundamental group sparse recovery problem seeks solutions to underdetermined systems:
\begin{equation}\label{observation model}
	b=\sum^m_{i=1}A_{J_i}{x^*}_{J_i}+\varpi,
\end{equation}
where $x^*\in\mathbb{R}^p$ denotes the ground true coefficients, $A_{J_i}\in \mathbb{R}^{n\times |J_i|}(i=1,2,\ldots,m)$ the design submatrices, and $\varpi\sim(0,\sigma^2I_n)$ the measurement noise. Notably, this generalizes standard sparse regression when $|J_i|=1$ for arbitrary $\{i\}$. Pioneered by Yuan and Lin\cite{Yuan06}, the canonical formulation employs group zero-norm regularization:
\begin{equation}\label{op0}
	\min_{x\in\mathbb{R}^p}\Theta_{\nu,\mu}(x):=f(x)+\nu\|x\|_{\rm gz},
\end{equation}
where $\|x\|_{\rm gz}:=\sum_{i=1}^m{\rm sign}(\|x_{J_i}\|_2)$ counts nonzero groups. The nonconvex, discontinuous nature of $\|\cdot\|_{\rm gz}$ induces NP-hardness, necessitating specialized optimization strategies.

Group sparse optimization has emerged as a cornerstone methodology with multidisciplinary applications. In computational imaging, it serves as a powerful regularizer for inverse problems including image denoising, deblurring, and inpainting. Through strategic partitioning of image attributes (color channels, texture patches) into coherent groups and enforcing inter-group sparsity constraints, this approach achieves superior reconstruction fidelity by preserving structural coherence \cite{Elad2010,Tropp2010,Duarte2011, Yuan12}. The statistical learning domain benefits from its dual capability in feature selection and model compression. By imposing group-wise sparsity on correlated feature clusters, the framework automatically prunes redundant variables while retaining discriminative feature groups—a mechanism proven to enhance generalization performance through intrinsic dimension reduction. Beyond single-task learning, multi-task variants leverage inter-task correlations via shared sparsity patterns, where parameter groups across tasks are jointly regularized to capture common representations while maintaining task-specific adaptations \cite{Yuan06, Meier08, Obozinski10, Huang10,ZhangGY08,Jenatton2011,Gong13}. Compressive sensing theory further reveals its theoretical advantages: As a structured sparsity paradigm, block or group sparsity not only generalizes standard sparse signal models but also permits tighter sampling bounds and improved recovery guarantees compared to element-wise sparsity \cite{Eldar09, Stojnic09,Davenport2011,Duarte2011}. Emerging applications in computational biology demonstrate its utility for genomic feature selection, where gene regulatory networks naturally exhibit group-structured sparsity \cite{Jenatton2011}. These developments have spurred extensions of classical sparse recovery algorithms to handle block-sparse constraints. Notable approaches include convex relaxation via the $\ell_{2,1}$-norm \cite{Yuan06, Meier08,Deng13,YangZou15}, iterative hard thresholding (IHT) methods \cite{Beck19} and nonconvex surrogate formulations that approximate the original NP-hard problem \eqref{op0} through tractable alternatives \cite{Ling13,Guo15,Hu17}; The nonconvex surrogates typically replace the discontinuous group zero-norm with non-Lipschitz $\ell_\alpha$-norm regularizers ($0<\alpha<1$) \cite{Chartrand07,Chen10}, smoothing concave approximations \cite{Bradley98,Rinaldi10} and folded concave penalties including SCAD and MCP \cite{Fan01,Zhang10}.
 
Recent advancements in nonconvex optimization have inspired new algorithmic developments. D. Sun et al. \cite{Tang2019} pioneered a PMM method for square-root-loss regression problems, which was subsequently extended by Zhang et al. \cite{Zhang-Pan-Bi-Sun23} to handle Difference-of-Convex (DC) surrogates of zero-norm regularized optimization. While these works advanced element-wise sparse modeling, they remain limited in addressing group-structured sparsity.

Motivated by these foundations, the authors propose the following group zero-norm regularized robust loss composite optimization problems
\begin{equation}\label{prob1}	\min_{x\in\mathbb{R}^p}\Theta_{\nu,\mu}(x):=\vartheta(Ax-b)+\frac{\mu}{2}\|x\|^2+\nu\|g(x)\|_{\rm gz}+\mathbb{I}_{\mathcal{X}}(x),
\end{equation}
where $\mathcal{X}\subseteq\mathbb{R}^p$ denotes a closed convex set, $f(x)\!:=\vartheta(Ax\!-\!b)+\frac{\mu}{2}\|x\|^2$ with $\vartheta\!:\mathbb{R}^n\to\mathbb{R}$ being a lower-bounded finite-valued convex function, $g(x)\!:=Bx$ encodes structured sparsity patterns through linear mapping, $A\in\mathbb{R}^{n\times p},B\in\mathbb{R}^{N\times p}$ and~$b\in\mathbb{R}^n$ are given data, $\mu>0$ ensures coercivity of $f$ over non-compact set $\mathcal{X}$ (redundant when $\mathcal{X}$ is compact).

The group sparse structure is explicitly characterized by
\begin{equation}\label{thetax}
	G(x)\!:=(\|g_{\!_{J_1}}\!(x)\|,\ldots,\|g_{\!_{J_m}}\!(x)\|)
\end{equation}
with composite regularization $\theta_{\nu}(x)\!:=\nu\|g(x)\|_{\rm gz}+\mathbb{I}_{\mathcal{X}}(x)$. Noted that $\|g(x)\|_{\rm gz}=\|G(x)\|_0$ represents the nonzero elements of $G(x)$.\\
{\bf Key advances beyond reference \cite{Zhang-Pan-Bi-Sun23}:}\\
1). Group-structured sparsity: $\|\cdot\|_{\rm gz}$ operates on predefined groups $\{J_i\}_{i=1}^m$ rather than individual elements\\
2). Structural encoding: Matrix $B$ models inter-group dependencies\\
3). Generalized constraints: $\mathbb{I}_{\mathcal{X}}(\cdot)$ accommodates convex domain restrictions.

\subsection{Prior Arts and Contributions} \label{Section-relatedlectures}

In fact, the model \eqref{prob1} covers various problems in many pieces of literature previously mentioned. In addition, when the smooth mapping $g$ is taken as an identity mapping namely $g(x)=x$ and $\{J_i\}=i$, the model \eqref{prob1} is reduced to the case in \cite{Zhang-Pan-Bi-Sun23}. When the smooth mapping $g$ is taken as an identity mapping namely $g(x)=x$, this model is a general group zero-norm optimization problem \eqref{op0}. When $g(x)=Bx$, and $B$ takes a special matrix, the model can correspond to the group sparse optimization problem of clustering \cite{David21}. Therefore, this paper considers a general form, namely $g$ is a smooth mapping. The authors provide a unified mechanism for the group zero-norm regularized composite optimization problem \eqref{prob1} in a more general manner.

As is well known, group variable selection models with nonconvex penalties are not without drawbacks. Although nonconvex penalties are beneficial for coefficient estimation, they can lead to nonconvex optimization problems. In nonconvex programming, optimization problems typically have multiple local optimal solutions that are not global. Solutions generated by algorithms such as gradients or coordinate descent methods may be trapped in local optima.
\cite{Pan-Liang-Liu2023} provided several local optimality conditions for a class of stationary points of problem \eqref{prob1}, which is the closest to the one from an algorithm. Based on this, the authors provide an improved PMM algorithm and analyze its global convergence and local (sub)linear rate of the iterative sequence generated, and also explores the statistical properties of group Lasso.

As a matter of fact, many scholars have conducted extensive research on the statistical properties of Lasso. For example, \cite{Steinhaeuser2012} investigated consistency results for estimators with general tree-structured norm regularizer, where group Lasso is its special case. \cite{Poignard2018} demonstrated the asymptotic properties of adaptive Lasso estimates. \cite{Rao2015, Rao2013} explored multi-task learning and classification problems based on group Lasso estimation variants. \cite{Li2015} studied multiple linear regression by using group Lasso. \cite{Ahsen2017} proposed a theoretical framework for developing error bounds for group Lasso and group Lasso with overlapping tree structures. Specifically, their results indicated that if the sample size satisfies a certain amount, the true solution is highly likely to be restored. Early works on the statistical theory of SCAD or MCP penalty, least squares estimators focused on the error bounds of global optima \cite{Zhang-Zhang2012} or local optima \cite{Fan2014}, which were obtained through specific initialization schemes and algorithms. Recently, in respect of the least squares objective function with SCAD or MCP penalty, \cite{Loh2015} established statistical characteristics for all stationary points(although their results are not directly applicable to SCAD or MCP penalty group estimators). Unlike previous results, this paper is inspired by reference \cite{Zhang-Pan-Bi-Sun23} to consider group zero-norm regularized robust loss minimization, where robust loss takes piecewise linear loss and squared strongly convex loss into consideration. Under certain restricted eigenvalue conditions of the design matrix, a statistical error bound is established for the iterative sequence and its stationary points generated by the proximal MM algorithm to the true solution of the problem.





\subsection{Organization and Notation}
The remainder of this paper is organized as follows. Section \ref{section2} reviews certain preliminaries for utilization. Some theoretical conditions and results are furnished to ensure the global convergence and (sub)linear convergent rate of the algorithm proposed in Section \ref{section3}. Additionally, the PMM method is employed to address the variant characterization of group zero-norm optimizations, with its dual problem solved by semismooth Newton method, and our
main convergence results are derived in Section \ref{section3}. Furthermore, in Section~\ref{section4}, the authors estimate the error bound between the iterates and the ground truth solution in a statistical sense. Section \ref{section5} assesses the performance of PMM-SNCG on both the synthetic and real data sets. Finally, concluding remarks are presented in Section \ref{section6}.

We end this section by summarizing the notation to be employed throughout the paper.
Throughout the article, the authors introduce the following notations. Let $\mathbb{R}$ ($\mathbb{R}_{+}$) be all (nonnegative) real number set. Let $\mathbb{R}^n$ be the real vector space of dimensional $n$ endowed with the Euclidean inner product $\langle\cdot,\cdot\rangle$ and induced norm $\|\cdot\|$. This paper denotes by $e$ a column vector of all 1s and $I$ an identity matrix whose dimension are known from the context. For any $x\in\mathbb{R}^n$ and $q\in[1,+\infty]$, $\|x\|_q$ denotes the $\ell_q$-norm of $x$ and $\|x\|$ represents $\|x\|_2$. $\|x\|_{(s)}$ denotes the sum of the first $s$ maximum elements of a vector $|x|$. ${\rm sign}(x)$ denotes the sign vector of $x$. For given positive integer $k$, denote by $[k]:=\{1,2,\ldots,k\}$. $\mathbb{I}_{\mathcal{X}}$ represents the indicator function with respect to the set $\mathcal{X}$. Let $J_1,J_2,\ldots,J_m$ be a collection of index sets to represent the group structure of explanatory factors, where $J_i\cap J_j\neq\emptyset$ for all $i\neq j\in\{1,2,\ldots,m\}$ and $\bigcup_{i=1}^mJ_i=\{1,2,\ldots,n\}$. For given index set $J\subset [n]$ and for arbitrary $x\in\mathbb{R}^n$, $x_{J}$ is a subvector formed by the elements of the indicator in $J$. Let $A:=[A_{J_1} A_{J_2} \cdots A_{J_m}]\in\mathbb{R}^{n\times p}$ and ${G}(x):=(\|x_{J_1}\|,\|x_{J_2}\|,\ldots,\|x_{J_m}\|)^{\top}$ for $x\in \mathbb{R}^p$, where $A_{J_i}$ for $i=1,2,\ldots,m$ is an $n\times |J_i|$ submatrix composed of columns with indicators in $J$. For a given matrix $A\in\mathbb{R}^{n\times p}$, $\|A\|$ denotes the spectral norm of $A$.
Given a continuously differentiable mapping $g\!:\mathbb{R}^p\to\mathbb{R}^{N}$, denote by $\nabla g(x)=[g'(x)]^{\top}$, where $g'(x)$ represents the~Jacobian matrix of $g$ at~$x$. Given a closed, proper and convex function $f\!:\mathbb{R}^ n\to\overline{\mathbb{R}}$, $f^*(\cdot):=\sup_{x\in\mathbb{R}^n} \{\langle \cdot, x \rangle-f(x)\}$ represents the conjugate function of $f$. For lower semicontinuous and convex function $f\!:\mathbb{R}^n\to\overline{\mathbb{R}}$, $\mathcal{P}_{\gamma}f$ and $e_{\gamma}f$ represent the proximal mapping and Moreau envelop of $f$ with $\gamma>0$, i.e., $\mathcal{P}_{\gamma}f(x):=\mathop{\arg\min}_{z\in\mathbb{R}^n}\Big\{f(z)+\frac{1}{2\gamma}\|z-x\|^2\Big\}$ and $e_{\gamma}f(x):=\min_{z\in\mathbb{R}^n}\Big\{f(z)+\frac{1}{2\gamma}\|z-x\|^2\Big\}$.

\section{Preliminaries}\label{section2}

\subsection{Generalized subdifferentials and subderivatives}
This subsection reviews the notions of subderivative and proximal, regular and limitting subdifferentials.
\begin{definition}\label{subderivative}\cite[Definition 8.1 $\&$ 13.3]{RW98} For a function $f:\mathbb{R}^p\rightarrow[-\infty,\infty]$, a point $\overline{x}$ with $f(\overline{x})$ finite and any $v\in\mathbb{R}^p$, the subderivative function $df(\overline{x}):\mathbb{R}^p\rightarrow[-\infty,\infty]$ is defined by
	\[
	df(\overline{x})(w):=\liminf\limits_{\tau \downarrow 0 \atop w^{\prime} \rightarrow w} \frac{f\left(\overline{x}+\tau w^{\prime}\right)-f(\overline{x})}{\tau},
	\]
	while the second subderivative of $f$ at $\overline{x}$ for $v$ and $w$ is defined by
	\[
	d^2f(\overline{x}|v)(w):={\displaystyle\liminf_{\tau\downarrow 0\atop w'\to w}}
	\frac{f(\overline{x}+\tau w')-f(\overline{x})-\tau\langle v,w'\rangle}{\frac{1}{2}\tau^2}.
	\]
\end{definition}

\begin{definition}\label{Gsubdiff-def}
	(\cite[Definition 8.45 $\&$ 8.3]{Roc70})
	Consider a function $f\!:\mathbb{R}^p\to[-\infty,+\infty]$ and a point
	$\overline{x}$ with $f(\overline{x})$ finite. The proximal subdifferential of $f$ at $\overline{x}$,
	denoted by $\widetilde{\partial}\!f(\overline{x})$, is defined as
	$
	\widetilde{\partial}\!f(\overline{x}):=\bigg\{v\in\mathbb{R}^p\ \big|\
	\liminf_{x\ne x'\to \overline{x}}
	\frac{f(x')-f(\overline{x})-\langle v,x'-\overline{x}\rangle}{\|x'-\overline{x}\|^2}>-\infty\bigg\};
	$
	the regular subdifferential of $f$ at $\overline{x}$, denoted by $\widehat{\partial}\!f(\overline{x})$, is defined as
	\[
	\widehat{\partial}\!f(\overline{x}):=\bigg\{v\in\mathbb{R}^p\ \big|\
	\liminf_{\overline{x}\ne x'\to \overline{x}}\frac{f(x')-f(\overline{x})-\langle v,x'-\overline{x}\rangle}{\|x'-\overline{x}\|}\ge 0\bigg\};
	\]
	and the (limiting) subdifferential of $f$ at $\overline{x}$, denoted by $\partial\!f(\overline{x})$, is defined as
	$
	\partial\!f(\overline{x}):=\Big\{v\in\mathbb{R}^p\ |\  \exists\,x^k\to \overline{x}\ {\rm with}\ f(x^k)\to f(\overline{x})\ {\rm and}\
	v^k\in\widehat{\partial}\!f(x^k)\ {\rm with}\ v^k\to v\Big\}.
	$
\end{definition}
\begin{remark}\label{remark-Gsubdiff}
	{\bf(a)} At each $\overline{x}$ with $f(\overline{x})$ finite, the sets $\widehat{\partial}f(\overline{x})$ and
	${\partial}\!f(\overline{x})$ are closed, $\widehat{\partial}\!f(\overline{x})$ and $\widetilde{\partial}f(\overline{x})$ are convex, but $\widetilde{\partial}f(\overline{x})$ is generally not closed(see the example in~\cite[Figure 6-12(a)]{RW98}), and
	$\widetilde{\partial}f(\overline{x})\subseteq\widehat{\partial}\!f(\overline{x})\subseteq\partial\!f(x)$.
	These inclusions may be strict if $f$ is nonconvex. When $f$ is convex,
	they reduce to the subdifferential of $f$ at $\overline{x}$ in the sense of \cite{Roc70}.
	
	\noindent
	{\bf(b)} The point $\overline{x}$ at which $0\in\partial\!f(\overline{x})$
	(respectively, $0\in\widetilde{\partial}\!f(\overline{x})$ and
	$0\in\widehat{\partial}\!f(\overline{x})$) is called a limiting
	(respectively, proximal and regular) critical point of $f$. By Definition \ref{Gsubdiff-def}, obviously, a local minimum of $f$
	is a proximal critical point of $f$ and then a regular and limiting critical point.
	
	\noindent
	{\bf(c)} Recall that a function $f\!:\mathbb{R}^p\to[-\infty,+\infty]$
	is said to be semiconvex if there exists a constant $\gamma>0$ such that
	$x\mapsto f(x)+\frac{\gamma}{2}\|x\|^2$ is convex, and the smallest $\gamma$
	of all such $\gamma$ is called the semiconvex modulus of $f$.
	For such a function,
	\(
	\widetilde{\partial}\!f(x)=\widehat{\partial}\!f(x)=\partial\!f(x)
	\)
	at all $x$ with $f(x)$ finite.
\end{remark}

The~KL property of extended real-valued functions and the~KL property of exponents~$1/2$ play a crucial role in the proof of the global convergence and local convergence rate of the iterative sequence generated by algorithms for solving nonconvex and nonsmooth optimization problems. This type of error bound property is explained as follows.

\begin{definition}\label{kl-property}
	(Kurdyka-{\L}ojasiewicz property)~Let $f:\mathcal{X}\rightarrow(-\infty,+\infty]$ be a proper function. The function $f$ is said to have the Kurdyka-{\L}ojasiewicz (KL) property at $\overline{x}\in{\rm dom}\partial f$ if there exist $\eta\in(0,+\infty]$, a continuous concave function $\varphi:[0,\eta)\rightarrow\mathbb{R}_+$ satisfying
	
	(i) $\varphi(0)=0$ and $\varphi$ is a continuously differentiable on $(0,\eta)$;
	
	(ii) for all $s\in(0,\eta)$, $\varphi'(s)>0$, and a neighborhood $\mathcal{U}$ of $\overline{x}$ such that $\varphi'(f(x)-f(\overline{x})){\rm dist}(0,\partial f(x))\geq1$ for all $x\in\mathcal{U}\cap[f(\overline{x})<f(x)<f(\overline{x})+\eta]$.
	
	In particular, when the function $\varphi$ can be selected as $\varphi(s)=c\sqrt{s}$, where $c>0$ is a constant, then the function $f$ is said to have the~KL property of exponential $\frac{1}{2}$ at the point $\overline{x}$. If $f$ has the KL property of exponent $\frac{1}{2}$ at every point in the set~${\rm dom}\partial f$, then $f$ is called a~KL function of exponent $\frac{1}{2}$.
\end{definition}

\begin{remark}\label{KL-remark}
	{\bf(a)} According to~\cite[Lemma 2.1]{Attouch10}, the proper lower semicontinuous functions have the~KL property of exponents~$\frac{1}{2}$ at noncritical points. Therefore, in order to prove that they are KL functions with an exponent of $\frac{1}{2}$, it is only necessary to prove that they possess this property at any critical point. Interested readers can refer to the calculation of the KL exponents\cite{LiPong18,Yu21,WuPanBi21}.
	
	\medskip
	\noindent
	{\bf(b)} According to \cite[Section~4]{Attouch10}, many types of functions are KL functions, such as semi-algebraic functions, globally subanalytic functions, and functions that can be defined in the real o-minimum structure.
\end{remark}

\subsection{Optimization properties of a family of composite functions}

Let $\mathscr{L}$ be a family of functions composed of closed proper convex functions $\phi\!:\mathbb{R}\to(-\infty,\infty]$ that satisfy the following properties:
\begin{equation}\label{phi-assump}
	{\rm int}({\rm dom}\,\phi)\supseteq[0,1],\
	t^*\!:=\mathop{\arg\min}_{0\le t\le 1}\phi(t),\ \phi(t^*)=0
	\ \ {\rm and}\ \ \phi(1)=1.
\end{equation}
For every $\phi\in\!\mathscr{L}$, let $\psi\!:\mathbb{R}\to(-\infty,\infty]$ be a closed proper convex function induced by the function $\phi$ as follows
\begin{equation}\label{psi-fun}
	\psi(t):=\!\left\{\!\begin{array}{cl}
		\phi(t) &{\rm if}\ t\in [0,1];\\
		\infty &{\rm if}\ t\notin [0,1].
	\end{array}\right.
\end{equation}
Let $\mathscr{L}_1$ be a family of strictly convex functions $\phi\in\!\mathscr{L}$ defined on $[0,1]$, and let $\mathscr{L}_2$ be composed of the functions $\phi\in\!\mathscr{L}_1$ satisfying $\phi(0)=0$ and there exist $0<\gamma_1<\gamma_2$ such that $\phi_{-}'(0)\ge\gamma_1$ and $\phi_{+}'(1)\le\gamma_2$. According to the \eqref{phi-assump}, it can be verified that $\mathscr{L}_2\subset\{\phi\in\mathscr{L}_1\ |\ t^*=0\}\subset\mathscr{L}_1$.

The following lemma characterizes the properties of the conjugate function $\psi^*$ of $\psi$ induced by functions in the function family $\mathscr{L}$. Its proof can be easily obtained using the definition of the function family $\mathscr{L}$, \cite[Theorem 26.3]{Roc70}, and \cite[Proposition 12.60]{RW98}, and will not be repeated here.
\begin{lemma}\label{psi-star}
	Let $\psi$ be the function induced by $\phi\in\!\mathscr{L}$ according to \eqref{psi-fun}. Then $\psi^*$ is a finite nondecreasing convex function on~$\mathbb{R}$. If $\phi\in\mathscr{L}_1$, then $\psi^*$ is a continuously differentiable function on $\mathbb{R}$; In addition, if $\phi$ is also strongly convex on $[0,1]$, then $(\psi^*)'$ is Lipschitz continuous on $\mathbb{R}$.
\end{lemma}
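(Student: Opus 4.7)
My plan is to work directly from the variational formula $\psi^*(s)=\sup_{t\in[0,1]}\{st-\phi(t)\}$, noting that the inclusion $[0,1]\subseteq{\rm int}({\rm dom}\,\phi)$ forces $\phi$ to be finite and continuous on $[0,1]$. Compactness of $[0,1]$ together with continuity of the objective in $t$ then gives attainment and finiteness of the supremum for every $s\in\mathbb{R}$, so $\psi^*$ is real-valued. Convexity is automatic as a pointwise supremum of affine functions in $s$. For monotonicity I would pick $\tilde t\in\arg\max_{t\in[0,1]}\{s_1 t-\phi(t)\}$ and observe that $\psi^*(s_2)\ge s_2\tilde t-\phi(\tilde t)=\psi^*(s_1)+(s_2-s_1)\tilde t\ge\psi^*(s_1)$ whenever $s_1\le s_2$, since $\tilde t\ge 0$. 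This handles the first assertion.

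For the $C^1$ claim under $\phi\in\mathscr{L}_1$, I would argue that strict convexity of $\phi$ on $[0,1]$ makes $\psi$ essentially strictly convex on ${\rm dom}\,\psi=[0,1]$. By \cite[Theorem 26.3]{Roc70}, its conjugate $\psi^*$ is then essentially smooth, and because ${\rm dom}\,\psi$ is compact, ${\rm dom}\,\psi^*=\mathbb{R}$ has empty boundary, so essential smoothness reduces to differentiability on all of~$\mathbb{R}$. Continuity of $(\psi^*)'$ then follows from the standard fact that a proper convex function whose subdifferential is single-valued on an open set is automatically $C^1$ there.

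For the final assertion, strong convexity of $\phi$ on $[0,1]$ with modulus $\alpha>0$ transfers verbatim to $\psi$ on its effective domain, and \cite[Proposition 12.60]{RW98} returns that $(\psi^*)'$ is globally Lipschitz on $\mathbb{R}$ with constant $1/\alpha$. The same proposition also re-delivers the $C^1$ property in the strongly convex case, giving a consistent overall picture.

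The main subtlety, rather than a true obstacle, lies in verifying that the endpoint behaviour of $\psi$ at $t=0$ and $t=1$ does not spoil differentiability of $\psi^*$ on the unbounded tails. This is where the inclusion $[0,1]\subseteq{\rm int}({\rm dom}\,\phi)$ earns its keep: it guarantees finiteness of the one-sided derivatives $\phi'_{+}(0)$ and $\phi'_{-}(1)$, so the unique maximizer $t(s)$ equals $0$ for $s\le\phi'_{+}(0)$, equals $1$ for $s\ge\phi'_{-}(1)$, and varies continuously (by strict convexity of $\phi$) on the intermediate interval. Hence $(\psi^*)'(s)=t(s)$ is continuous everywhere on $\mathbb{R}$, which is precisely the $C^1$ conclusion in the second part of the lemma.
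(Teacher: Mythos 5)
Your argument is correct and follows exactly the route the paper intends: the paper omits the proof, stating only that it follows from the definition of $\mathscr{L}$, \cite[Theorem 26.3]{Roc70}, and \cite[Proposition 12.60]{RW98}, and your proposal supplies precisely those details (direct verification of finiteness, convexity and monotonicity from the sup formula, essential smoothness of the conjugate of an essentially strictly convex function for the $C^1$ claim, and the strong-convexity/Lipschitz-gradient duality for the last claim). The endpoint analysis via $\phi'_{+}(0)$ and $\phi'_{-}(1)$ is a welcome extra check that the paper leaves implicit.
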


\begin{lemma}\label{subdiff-vphirho}
	Let~$\psi$ be the function induced by~$\phi\in\!\mathscr{L}$ according to~\eqref{psi-fun}. For given~$\rho>0$, define
	\[
	\varphi_{\rho}(t):=t-\!\rho^{-1}\psi^*(\rho t)\quad\ \forall t\in\mathbb{R}.
	\]
	Then, for arbitrary~$\overline{t}\in\mathbb{R}$, $\partial\varphi_{\rho}(\overline{t})=1-\partial_{B}\psi^*(\rho\overline{t})$.
\end{lemma}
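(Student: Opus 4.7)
The plan is to reduce the computation to a standard sum rule and then to exploit the concavity of the nonsmooth piece on $\mathbb{R}$ so that the limiting subdifferential collapses to the B-subdifferential. I would first write $\varphi_{\rho}(t) = \ell(t) + u(t)$ with $\ell(t) := t$ smooth and $u(t) := -\rho^{-1}\psi^{*}(\rho t)$. Since $\ell$ is continuously differentiable with derivative identically $1$, the sum rule for the limiting subdifferential gives $\partial\varphi_{\rho}(\overline{t}) = 1 + \partial u(\overline{t})$, and the whole task becomes the identification $\partial u(\overline{t}) = -\partial_{B}\psi^{*}(\rho\overline{t})$.

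Next, by Lemma~\ref{psi-star}, $\psi^{*}$ is finite-valued, nondecreasing, and convex on $\mathbb{R}$, hence locally Lipschitz. Consequently $h(t) := \rho^{-1}\psi^{*}(\rho t)$ is convex and locally Lipschitz, so $u = -h$ is concave and locally Lipschitz. I would then argue that for a concave locally Lipschitz function on $\mathbb{R}$ the regular subdifferential $\widehat{\partial}u(t)$ is empty whenever $u$ fails to be differentiable at $t$, and is the singleton $\{u'(t)\}$ otherwise. This is a direct computation from Definition~\ref{Gsubdiff-def}: concavity of $u$ gives one-sided derivatives with $u'_{-}(t) \ge u'_{+}(t)$, and testing the $\liminf$ condition for $v \in \widehat{\partial}u(t)$ separately along $s \downarrow t$ and $s \uparrow t$ yields the two inequalities $v \le u'_{+}(t)$ and $v \ge u'_{-}(t)$, which are inconsistent at any non-differentiability point.

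With that in hand, the limiting subdifferential $\partial u(\overline{t})$, which by definition is the set of limits $\lim v^{k}$ of regular subgradients $v^{k} \in \widehat{\partial}u(t^{k})$ along sequences $t^{k} \to \overline{t}$, reduces to the set of limits of ordinary derivatives $u'(t^{k})$ taken over differentiability points $t^{k} \to \overline{t}$; that is, $\partial u(\overline{t}) = \partial_{B}u(\overline{t})$. At any such $t^{k}$, the classical chain rule (valid since $\psi^{*}$ is in particular differentiable at $\rho t^{k}$) gives $u'(t^{k}) = -(\psi^{*})'(\rho t^{k})$. Passing to the limit, and using that $\rho t^{k}$ ranges over differentiability points of $\psi^{*}$ and converges to $\rho\overline{t}$, yields $\partial_{B}u(\overline{t}) = -\partial_{B}\psi^{*}(\rho\overline{t})$. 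Combining with the sum-rule step gives $\partial\varphi_{\rho}(\overline{t}) = 1 - \partial_{B}\psi^{*}(\rho\overline{t})$, as claimed.

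The main obstacle I anticipate is the identification $\partial u(\overline{t}) = \partial_{B}u(\overline{t})$, since in general for locally Lipschitz functions one has only the inclusion $\partial_{B}u \subseteq \partial u$ and the limiting subdifferential may pick up extra elements coming from regular subgradients at nearby non-differentiability points. The concavity of $u$ is precisely what rules out such extra contributions by forcing $\widehat{\partial}u$ to be empty off the differentiability set, so the crucial technical point is the direct $\liminf$ calculation with the one-sided derivatives described above; everything else in the proof is routine application of the sum rule and the scalar chain rule.
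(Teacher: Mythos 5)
Your proof is correct. For comparison: the paper disposes of this lemma in a single line, observing via Lemma~\ref{psi-star} that $\varphi_{\rho}$ is locally Lipschitz and then citing \cite[Corollary 9.21]{RW98}, so the whole identification $\partial\varphi_{\rho}(\overline{t})=1-\partial_{B}\psi^*(\rho\overline{t})$ is outsourced to that reference. You instead supply the argument from first principles: you isolate the concave piece $u=-\rho^{-1}\psi^*(\rho\,\cdot\,)$, use the scalar one-sided-derivative computation to show that $\widehat{\partial}u(t)$ is empty at every kink and the singleton $\{u'(t)\}$ elsewhere, and then read off from Definition~\ref{Gsubdiff-def} that the limiting subdifferential collapses to the B-subdifferential, after which the smooth sum rule and the chain rule finish the job. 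The key inequalities are right: concavity gives $u'_{-}(t)\ge u'_{+}(t)$, while membership $v\in\widehat{\partial}u(t)$ forces $u'_{-}(t)\le v\le u'_{+}(t)$, so regular subgradients exist only at differentiability points. This also makes explicit the substantive content of the lemma, namely that the answer is the (at most two-point) set $1-\partial_{B}\psi^*(\rho\overline{t})$ rather than the full interval $1-\partial\psi^*(\rho\overline{t})$, which matters precisely for $\phi\in\mathscr{L}\setminus\mathscr{L}_1$ where $\psi^*$ has kinks (as in Example~\ref{example2.5.1}). The trade-off is the usual one: the paper's citation is shorter, whereas your argument is self-contained and elementary and shows exactly where concavity enters; it is in effect a one-dimensional proof of the cited corollary.
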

\begin{proof}
	By Lemma \ref{psi-star}, the function $\varphi_{\rho}$ is locally Lipschitz continuous, combining \cite[Corollary 9.21]{RW98}, the conclusion is obtained.
\end{proof}
\begin{lemma}\label{smoothness}
	Let $\psi$ be the function induced by $\phi\in\!\mathscr{L}$ according to \eqref{psi-fun} and $F\!:\mathbb{R}^n\to\mathbb{R}^{l}$ be a continuously differential mapping. Suppose $\phi$ is strictly convex on $[0,1]$ and $\phi(0)=0$ and there exists $\gamma>0$ such that $\phi_{-}'(0)\ge\gamma$. Then, for arbitrarily given $\rho>0$ and $p\in(1,\infty)$, the function $\Gamma_{\rho}(x):=\rho^{-1}\psi^*(\rho\|F(x)\|_p)$ is continuously differentiable on $\mathbb{R}^n$. Moreover, 
	$\nabla\Gamma_{\!\rho}(x)=(\psi^*)'(\rho\|F(x)\|_p)\frac{\nabla F(x)[{\rm sign}(F(x))\circ|F(x)|^{p-1}]}{\|F(x)\|_p^{p-1}}$ if $F(x)\ne 0$ and $\nabla\Gamma_{\!\rho}(x)=0$ if $F(x)=0$.
	
\end{lemma}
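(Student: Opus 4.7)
My plan is to split the domain into the zero set $Z:=\{x\in\mathbb{R}^n:F(x)=0\}$ and its complement, and to exploit the fact that the hypotheses force $\psi^*$ to vanish identically on an entire neighborhood of $0$, so that $\Gamma_\rho$ is flat around $Z$. The first step will be to establish this flatness: by Lemma~\ref{psi-star}, strict convexity of $\phi$ on $[0,1]$ already gives $\psi^*\in C^1(\mathbb{R})$; and from the Fenchel representation $\psi^*(s)=\sup_{t\in[0,1]}\{st-\phi(t)\}$ combined with convexity and the inequality $\phi_+'(0)\ge\phi_-'(0)\ge\gamma>0$, one sees that for every $s\le\gamma$ the map $t\mapsto st-\phi(t)$ has nonpositive right derivative at $t=0$ and is therefore nonincreasing on $[0,1]$. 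Hence the supremum is attained at $t=0$, which yields $\psi^*(s)=-\phi(0)=0$ and $(\psi^*)'(s)=0$ for all $s\le\gamma$.

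With that in hand, the second step handles any $\overline{x}\notin Z$. On a neighborhood of such a point, $F(x)\ne 0$ and for $p\in(1,\infty)$ the map $x\mapsto\|F(x)\|_p$ is $C^1$ with gradient $\nabla F(x)[{\rm sign}(F(x))\circ|F(x)|^{p-1}]/\|F(x)\|_p^{p-1}$; composing with the $C^1$ scalar function $s\mapsto\rho^{-1}\psi^*(\rho s)$ via the chain rule produces exactly the claimed formula for $\nabla\Gamma_\rho(x)$.

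The third step handles $\overline{x}\in Z$. Continuity of $F$ and Step~1 produce a neighborhood $\mathcal{U}$ of $\overline{x}$ on which $\rho\|F(x)\|_p<\gamma$, so $\psi^*(\rho\|F(x)\|_p)=0$ throughout $\mathcal{U}$ and thus $\Gamma_\rho\equiv 0$ on $\mathcal{U}$; in particular $\Gamma_\rho$ is differentiable at $\overline{x}$ with $\nabla\Gamma_\rho(\overline{x})=0$. The same observation settles continuity of the gradient at points of $Z$: for any sequence $x^k\to\overline{x}\in Z$ with $F(x^k)\ne 0$, eventually $\rho\|F(x^k)\|_p<\gamma$, whence the scalar prefactor $(\psi^*)'(\rho\|F(x^k)\|_p)$ in the Step~2 formula vanishes, giving $\nabla\Gamma_\rho(x^k)=0=\nabla\Gamma_\rho(\overline{x})$.

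The principal obstacle is Step~1: it is not enough to have $\psi^*$ merely continuously differentiable; what makes the argument close is that $\psi^*$ is \emph{constant equal to $0$} on a full neighborhood of $0$, and this is precisely what the hypothesis $\phi_-'(0)\ge\gamma>0$ together with $\phi(0)=0$ is tailored to guarantee. Once this flatness is in hand, everything else is a routine chain-rule computation together with a continuity check across the singular set $Z$.
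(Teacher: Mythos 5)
Your proposal is correct and follows essentially the same route as the paper's proof: establish that $\psi^*$ vanishes identically on $(-\infty,\gamma]$ so that $\Gamma_\rho$ is locally constant (equal to $0$) around the zero set of $F$, and apply the chain rule away from it. Your Step~1 usefully spells out the Fenchel-conjugate computation that the paper dismisses as "easily verified," and your Step~3 uses only continuity of $F$ where the paper invokes its first-order expansion, but these are cosmetic refinements rather than a different argument.
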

\begin{proof}
	According to the assumptions of $\phi$, it can be easily verified $\psi^*(\omega)=0$ for all $\omega\in(-\infty,\gamma]$. So $(\psi^*)'(\omega)=0$ holds for all $\omega\in(-\infty,\gamma)$. According to Lemma \ref{psi-star}, the function $\psi^*$ is continuously differentiable on $\mathbb{R}$. Fix any $x\in\mathbb{R}^n$. If $F(x)\ne 0$, since the function $x'\mapsto\|F(x')\|_p$ is continuously differentiable at $x$, so $\Gamma_{\!\rho}$ is continuously differentiable at $x$. Thus, it is only necessary to prove the case where $F(x)=0$.
	Since $F$ is differentiable at $x$, so $F(x+\Delta x)=F'(x)\Delta x+o(\|\Delta x\|)$ holds for any $\Delta x\to 0$, this means that for sufficiently small $\Delta x$, we obtain $\rho\|F(x+\Delta x)\|_p=\rho\|F'(x)\Delta x+o(\|\Delta x\|)\|_p\in[0,\gamma]$, and then $\psi^*(\rho\|F(x+\Delta x)\|_p)=0$. Thus, for arbitrary~$\Delta x\to 0$, it holds that $\Gamma_{\!\rho}(x\!+\!\Delta x)=\rho^{-1}\psi^*(\rho\|F(x\!+\!\Delta x)\|_p)=0=\Gamma_{\!\rho}(x)$, this indicates that $\Gamma_{\!\rho}$ is differentiable at $x$ and its derivative is $0$. Combining $(\psi^*)'(0)=0$ immediately obtains the conclusion.
\end{proof}

In the following, some common examples $\phi\in\!\mathscr{L}$ are given. Although these examples have appeared in reference \cite{Zhang-Pan-Bi-Sun23}, they are included here for the convenience of subsequent analysis and citation.

\begin{example}\label{example2.5.1}
	For every~$t\in\mathbb{R}$, define~$\phi(t):=t$. Obviously, $\phi\in\mathscr{L}$ and $t^*=0$. For arbitrary $\omega\in\mathbb{R}$,
	\[
	\psi^*(\omega)=\left\{\begin{array}{cl}
		0   & {\rm if}\ \omega\leq1,\\
		\omega-1 & {\rm if}\ \omega>1
	\end{array}\right.
	\]
	{\rm and}
	\[
	\varphi_{\rho}(\omega)=\left\{\begin{array}{cl}
		\omega   & {\rm if}\ \omega\leq1/\rho,\\
		1/\rho & {\rm if}\ \omega>1/\rho.
	\end{array}\right.
   \]
	It's easy to verify that $\phi\notin\mathscr{L}_1$, then it is acquired that $\psi^*$ is not continuously differentiable on $\mathbb{R}$, its nondifferentiable point is $\omega=1$.
\end{example}
\begin{example}\label{example2.5.2}
	For every $t\in\mathbb{R}$, define $\phi(t):=\frac{a-1}{a+1}t^2+\frac{2}{a+1}t\ (a>1)$. Obviously, $\phi\in\!\mathscr{L}$ and $t^*=0$. For arbitrary $\omega\in\mathbb{R}$, after simple calculation, it is obtained that
	\begin{equation}\label{part-scad}
		\psi^*(\omega)=\left\{\begin{array}{cl}
			0 & {\rm if}\ \omega\leq \frac{2}{a+1},\\
			\frac{((a+1)\omega-2)^2}{4(a^2-1)} & {\rm if}\ \frac{2}{a+1}<\omega\le\frac{2a}{a+1},\\
			\omega-1 & {\rm if}\ \omega>\frac{2a}{a+1}
		\end{array}\right.
	\end{equation}
	and
	\begin{equation}\label{scad}
		\varphi_{\rho}(\omega)=\left\{\begin{array}{cl}
			\omega & {\rm if}\ \omega\leq \frac{2}{\rho(a+1)},\\
			\omega-\frac{((a+1)\rho\omega-2)^2}{4\rho(a^2-1)} & {\rm if}\ \frac{2}{\rho(a+1)}<\omega\le\frac{2a}{\rho(a+1)},\\
			1/\rho & {\rm if}\ \omega>\frac{2a}{\rho(a+1)}.
		\end{array}\right.
	\end{equation}
	According to the Lemma \ref{psi-star}, $\varphi_{\rho}$ is continuously differentiable on $\mathbb{R}$ and $\varphi_{\rho}'$ is Lipschitz continuous. In addition, it can be easily verified that $\phi\in\mathscr{L}_2$ holds, and then the function $\Gamma_{\rho}$ of corresponding Lemma \ref{smoothness} is continuously differentiable.
\end{example}
\begin{example}\label{example2.5.3}
	For every $t\in\mathbb{R}$, define $\phi(t):=\frac{a^2}{4}t^2+\frac{2a-a^2}{2}t+\frac{(a-2)^2}{4}\ (a>2)$. Obviously, $\phi\in\mathscr{L}$ and $t^*=\frac{a-2}{a}$. After simple calculation, it is follows that
	\begin{equation}\label{part-MCP}
		\psi^*(\omega):=\!\left\{\!\begin{array}{cl}
			-\frac{(a-2)^2}{4} &{\rm if}\ \omega\leq a-\frac{a^2}{2},\\
			\frac{1}{a^2}(\frac{a(a-2)}{2}+\omega)^2-\frac{(a-2)^2}{4} &{\rm if}\ a-\frac{a^2}{2}<\omega\leq a,\\
			\omega-1 &{\rm if}\ \omega>a
		\end{array}\right.
	\end{equation}
	and
	\begin{equation}\label{MCP}
		\varphi_{\rho}(\omega):=\!\left\{\!\begin{array}{cl}
			\omega+\frac{(a-2)^2}{4\rho},{\rm if}\ \omega\le \frac{2a-a^2}{2\rho},\\
			\omega+\frac{(a-2)^2}{4\rho}\!-\!\frac{(\frac{a(a-2)}{2}+\rho\omega)^2}{a^2\rho},{\rm if}\ \frac{2a-a^2}{2\rho}<\omega\le \frac{a}{\rho} a,\\
			1/\rho,{\rm if}\ \omega>a/\rho.
		\end{array}\right.
	\end{equation}
	According to Lemma \ref{psi-star}, $\varphi_{\rho}$ is continuously differentiable on $\mathbb{R}$ and~$\varphi_{\rho}'$ is Lipschitz continuous. Obviously, it yields $\phi\notin\mathscr{L}_2$. It can be verified that the function $\omega\mapsto\varphi_{\rho}(|\omega|)$ is nondifferentiable at $\omega=0$. Therefore, the corresponding function $\Gamma_{\rho}$ of Lemma \ref{smoothness} is nondifferentiable at those points $x$ with $F(x)=0$.
\end{example}

\section{Proximal~MM method for equivalent DC surrogate problems}\label{section3}
Take any function $\phi\in\!\mathscr{L}_1$. By \cite[Theorem 3.3]{Pan-Liang-Liu2023}, if every corresponding multivalued function $\Upsilon_{\!s}$ is calm at the origin for all $x\in \!\Upsilon_ {\! S}(0)$, then for each $\rho\ge\overline{\rho}$, the following optimization problem
\begin{equation}\label{DC-Sprob1} 
	 \min_{x\in\mathcal{X}}\Theta_{\rho,\nu,\mu}(x)\!:=\vartheta(Ax-b)+\frac{\mu}{2}\|x\|^2+\!\rho\nu \sum_{i=1}^m\varphi_{\rho}(\|g_{\!_{J_i}}\!(x)\|)
\end{equation}
is the equivalent surrogate problem of group zero-norm regularized problem \eqref{prob1}, where $\varphi_{\rho}$ is defined in Lemma \ref{subdiff-vphirho}, and for each $i\in[m]$, since $x\mapsto\varphi_{\rho}(\|g_{\!_{J_i}}\!(x)\|)$ is a DC function, then \eqref{DC-Sprob1} with corresponding $\rho\ge\overline{\rho}$ is an equivalent DC surrogate problem of \eqref{prob1}. In this section, the authors propose a proximal MM method for solving DC problem \eqref{DC-Sprob1} to obtain the satisfying stationary point or local optimal solution of the group zero-norm regularized problem. For convenience, for arbitrary $x\in\mathbb{R}^p$, the authors define
\begin{equation}\label{wrho}
	w_{\rho}(x):=(w_{1,\rho}(x),\ldots,w_{m,\rho}(x))^{\top},
\end{equation}
 where $w_{i,\rho}(x):=(\psi^*)'(\rho\|g_{J_i}(x)\|),\ \ \forall i\in[m].$

\subsection{Proximal MM method for DC surrogate problem~\eqref{DC-Sprob1}}\label{sec3.1}

Fix arbitrary $x',x\in\mathbb{R}^p$. By the convexity of the function $\psi^*$ and mapping $G$ defined in \eqref{thetax}, it is obtained that
\begin{equation}\label{eq2}
	\sum_{i=1}^m\psi^*(\rho\|g_{\!_{J_i}}\!(x)\|)\geq\sum\limits_{i=1}^m\psi^*(\rho \|g_{\!_{J_i}}\!(x')\|)+\langle w_{\rho}(x'),\rho G(x)-\rho G(x')\rangle.
\end{equation}
By combining $\varphi_{\rho}$ and the expression of $\Theta_{\rho,\nu,\mu}$, it is acquired that
\begin{align*}
	&\Theta_{\rho,\nu,\mu}(x) \le\Xi_{\rho,\nu,\mu}(x,x')\\
	&\qquad \qquad :=\vartheta(Ax-b)+\frac{\mu}{2}\|x\|^2+\rho\nu\sum_{i=1}^m\|g_{J_i}(x)\|\\
	&\qquad \qquad -\rho\nu\langle w_{\rho}(x'),{G}(x)\rangle+\mathbb{I}_\mathcal{X}(x)+R_{\rho,\nu,\mu}(x'),
\end{align*}
where $R_{\rho,\nu,\mu}(x')\!=\rho\nu\langle w_{\rho}(x'),{G}(x')\rangle-\nu\sum_{i=1}^m\psi^*(\rho\|g_{J_i}(x')\|)$. 
Based on $\Xi_{\rho,\nu,\mu}(x',x')=\Theta_{\rho,\nu,\mu}(x')$, it can be seen that $\Xi_{\rho,\nu,\mu}(\cdot,x')$ is a majoriation of $\Theta_{\rho,\nu,\mu}(\cdot)$ at $x'$. This majorization is better than one induced by the convexity of each function $\Gamma_{i,\rho}(z):=\rho^{-1}\psi^*(\rho\|g_{\!_{J_i}}\!(z)\|)$. Indeed, according to \cite[Theorem 10.49]{RW98} and the smoothness of $g$, it is not difficult to obtain
\[
\partial\Gamma_{i,\rho}(x')=
\left\{\begin{array}{cl}
	\Big\{\frac{\nabla g_{J_{i}}(x')g_{J_i}(x')}{\|g_{J_i}(x')\|}(\psi^*)'(\rho\|g_{J_i}(x')\|)\Big\},g_{J_i}(x')\ne 0;\\
	\nabla g_{J_{i}}(x')\big[\bigcup_{\omega\in\partial\psi^*(0)}\partial(\omega\|\cdot\|)(0)\big],g_{J_i}(x')=0.
\end{array}\right.
\]
By using this expression and reviewing $g(x)=Bx$, for each $v^i\in\partial\Gamma_{i,\rho}(x')$, it can be verified that
\begin{equation}\label{ineq41-major}
	\langle v^i,x\rangle\le(\psi^*)'(\rho\|g_{J_i}(x')\|)\|g_{J_i}(x)\|=\langle w_{i,\rho}(x'),G_i(x)\rangle.
\end{equation}
According to the convexity of $\Gamma_{i,\rho}$, one can observe that $\Gamma_{i,\rho}(x)\ge\Gamma_{i,\rho}(x')+\langle v^i,x-x'\rangle$. Combining the expression of $\Theta_{\rho,\nu,\mu}$ yields that
\begin{align*}
	&\Theta_{\rho,\nu,\mu}(x)
	\le\widetilde{\Xi}_{\rho,\nu,\mu}(x,x')\\
	&\qquad \qquad:=\vartheta(Ax-b)+\frac{\mu}{2}\|x\|^2+\rho\nu\sum_{i=1}^m\|g_{J_i}(x)\|\\
	&\qquad \qquad-\rho\nu\sum_{i=1}^m\langle v^{i},x\rangle+\mathbb{I}_\mathcal{X}(x)+\widetilde{R}_{\rho,\nu,\mu}(x')
\end{align*}
where $\widetilde{R}_{\rho,\nu,\mu}(x')\!=\rho\nu\sum_{i=1}^m\langle v^{i},x'\rangle-\nu\sum_{i=1}^m\psi^*(\rho\|g_{J_i}(x')\|)$.
Since $\widetilde{\Xi}_{\rho,\nu,\mu}(x',x')
=\Theta_{\rho,\nu,\mu}(x')$, so~$\widetilde{\Xi}_{\rho,\nu,\mu}(\cdot,x')$ is also the majorization of $\Theta_{\rho,\nu,\mu}(\cdot)$ at $x'$. By \eqref{ineq41-major}, it follows that
\[
\Xi_{\rho,\nu,\mu}(x,x')+R_{\rho,\nu,\mu}(x')\le \widetilde{\Xi}_{\rho,\nu,\mu}(x,x')+\widetilde{R}_{\rho,\nu,\mu}(x').
\]
This indicates that the facts claimed above are valid. In view of this, the following uses the majorization $\Xi_{\rho,\nu,\mu}(\cdot,x^k)$ of $\Theta_{\rho,\nu,\mu}$ at current iterative $x^k$ to design an inexact proximal MM method for solving problem~\eqref{DC-Sprob1}, its basic idea is to seek the inexact minimum $x^{k+1}$ generated by proximal majorization function $x\mapsto\Xi_{\rho,\nu,\mu}(x,x^k)+\frac{1}{2}\|x\!-\!x^k\|^2_{\gamma_{1,k}I+\gamma_{2,k}A^{\top}A}$ of $\Theta_{\rho,\nu,\mu}$ at $x^k$, these inexact minimum points $x^{k+1}$ can be used to generate an iterative sequence $\{x^k\}_{k\in\mathbb{N}}$. The detailed iteration steps are described as follows.

\medskip
\setlength{\fboxrule}{0.8pt}
\noindent
	{
		\begin{algorithm}\label{PMM}({\bf Inexact proximal MM method for~\eqref{DC-Sprob1}})
			\begin{description}
				\item[Initialization:] Choose $\widetilde{\lambda}>0,\widetilde{\gamma}_{1,0}>0,\widetilde{\gamma}_{2,0}>0$,
				seek an approximate optimal solution of the following problem
				\begin{align}\label{x0sub}					&x^{0}\approx\mathop{\arg\min}_{x\in \mathcal{X}}\bigg\{\vartheta(Ax-b)+\widetilde{\lambda}\langle e,G(x)\rangle	+\frac{\widetilde{\gamma}_{1,0}}{2}\|x\|^2\nonumber\\
				&\quad \quad \quad \quad\quad \quad+\frac{\widetilde{\gamma}_{2,0}}{2}\|Ax\|^2\bigg\}.
				\end{align}
				\hspace*{0.5cm} Choose $\phi\in\mathscr{L}_1,\,\mu>0,\,\nu>0,\rho\ge 1,\varrho\in(0,1],\underline{\gamma_1}>0,\underline{\gamma_2}>0,0<\gamma_{1,0}\le\widetilde{\gamma}_{1,0}$, $0<\gamma_{2,0}\le\widetilde{\gamma}_{2,0}$. Set $\lambda:=\rho\nu$ and $Q:=\underline{\gamma_1}I\!+\!\underline{\gamma_2}A^{\top}A$.
				
				\item[{\rm{\bf For}}] $k=1,2,\ldots$ {\rm{\bf do}}
				\begin{itemize}
					\item[{\rm S.1}] Compute $w^k=w_{\rho}(x^{k})$ and denote by $v^k=e-w^k$;
					
					\item[{\rm S.2}] Take error vector $\delta^k\in\mathbb{R}^p$ satisfying $\|\delta^k\|\le\frac{\|Q^{1/2}(x^k-x^{k-1})\|}{\sqrt{2}\|Q^{-1/2}\|}$.
					
					\item[{\rm S.3}] Set $Q_k=\gamma_{1,k}I\!+\!\gamma_{2,k}A^{\top}A$ and compute an approximate optimal solution $x^{k+1}$ of the strongly convex problem:
					\begin{align}\label{subprobk}
						&\min_{x\in \mathcal{X}}
						\Big\{\vartheta(Ax-b)+\frac{\mu}{2}\|x\|^2+\lambda\langle v^k,G(x)\rangle\nonumber\\
						&\quad \quad \quad +\frac{1}{2}\|x\!-\!x^k\|^2_{Q_k}-\langle\delta^k,x-x^k\rangle\Big\}.
					\end{align}
					
					\item[{\rm S.4}] Update $\gamma_{1,k}$ and $\gamma_{2,k}$ by $\gamma_{1,k+1}=\max(\underline{\gamma_1},\varrho\gamma_{1,k})$ and $\gamma_{2,k+1}=\max(\underline{\gamma_2},\varrho\gamma_{2,k})$.
				\end{itemize}
				\item[{\rm{\bf end~for}}]
			\end{description}
		\end{algorithm}
	}


\begin{remark}\label{remark-PPM}
	{\bf(a)}~In the initialization step of the Algorithm \ref{PMM}, $\ell_{2,1}$-norm regularized minimization problem \eqref{x0sub} is solved approximately for providing satisfactory initial points. As shown in the following Proposition~\ref{initial-point}, such an initial point is good in a statistical sense.
	
	\noindent
	{\bf(b)}~The error vector $\delta^k$ in problem \eqref{subprobk} implies that $x^{k+1}$ is actually an inexact optimal solution of the following strongly convex programming
	\begin{equation}\label{Esubprobk}
		\min_{x\in \mathcal{X}}
		\Big\{\vartheta(Ax-b)+\frac{\mu}{2}\|x\|^2+\lambda\langle v^k,G(x)\rangle+\frac{1}{2}\|x\!-\!x^k\|^2_{Q_k}\Big\},
	\end{equation}
	where inexactness is reflected in
	\begin{align}\label{inclusion-xk}
		&\delta^k\in A^{\top}\partial\vartheta(Ax^{k+1}\!-b)+\mu x^{k+1}\!+\lambda\partial\langle v^k,G(\cdot)\rangle(x^{k+1})\nonumber\\
		&\quad+Q_k(x^{k+1}\!-\!x^k)+\mathcal{N}_{\mathcal{X}}(x^{k+1}).
	\end{align}
	Note that the error vector $\delta^k$ only depends on the information of the past two iterate points $x^{k-1}$ and $x^k$. Therefore, such inexact solutions without the need to be given beforehand can be automatically generated by solving the problem \eqref{Esubprobk}. Based on the definition of $w_{\rho}(\cdot)$ in \eqref{wrho}, $w_i^k\in[0,1]$ of step S.1 in Algorithm \ref{PMM} usually has an explicit expression. For example, when the function $\phi$ is taken from Example \ref{example2.5.2}, it is derived that
	\begin{equation}\label{wk-equa1}	w_i^k=\min\bigg\{1,\max\Big(0,\frac{(a+1)\rho\|g_{J_i}(x^k)\|-2}{2(a-1)}\Big)\bigg\}\quad\forall i\in[m];
	\end{equation}
	When $\phi$ is taken from Example \ref{example2.5.3}, it is attained that
	\begin{equation}\label{wk-equa2}		w_i^k=\min\bigg\{1,\max\Big(0,\frac{a-2}{a}+\frac{2\rho}{a^2}\|g_{J_i}(x^k)\|\Big)\bigg\}\quad\forall i\in[m].
	\end{equation}
	In this way, the main work in the for end loop of Algorithm \ref{PMM} is to find the approximate optimal solution of the subproblem \eqref{Esubprobk}.
	
	\noindent
	{\bf(c)}~The proximal term $\frac{1}{2}\|x\!-\!x^k\|^2_ {Q_k}$ in question \eqref{subprobk} has two functions: One ensures that the subproblems \eqref{subprobk} or \eqref{Esubprobk} are easy to solve; The other is to ensure that the iterative sequence generated by the algorithm has global convergence, as detailed in the convergence analysis in section \ref{subsec4.1.1}.
\end{remark}

Before conducting convergence analysis on the Algorithm \ref{PMM}, let's summarize the properties of functions $\Theta_{\rho,\nu,\mu}$ commonly used in the subsequent analysis.
\begin{proposition}\label{pro-Thetarho}
	Pick $\phi\in\mathscr{L}_1$. Fix arbitrary $\nu>0$, $\mu>0$ and $\rho>0$. So, the following conclusions hold true.
	\begin{itemize}
		\item[(i)]~For each $i\in[m]$, $\Gamma_{i,\rho}(\cdot):=\rho^{-1}\psi^*(\rho\|g_{\!_{J_i}}\!(\cdot)\|)$ is finite convex function on $\mathbb{R}^p$ and
		\[
		\partial\Gamma_{i,\rho}(x)=
		\left\{\begin{array}{cl}
			\Big\{\frac{\nabla g_{J_{i}}(x)g_{J_i}(x)}{\|g_{J_i}(x)\|}(\psi^*)'(\rho\|g_{J_i}(x)\|)\Big\},
			{\rm if}\ g_{J_i}(x)\ne 0;\\
			\nabla g_{J_{i}}(x)\partial[(\psi^*)'(0)\|\cdot\|](0),{\rm if}\ g_{J_i}(x)=0
		\end{array}\right.\quad\forall x\in\mathbb{R}^p;
		\]
		If $\phi\in\mathscr{L}_2$, then $\Gamma_{i,\rho}$ is continuously differentiable and convex on $\mathbb{R}^p$, and for any $x\in\mathbb{R}^p$, it is derived that
		\[
		\nabla\Gamma_{i,\rho}(x)=
		\left\{\begin{array}{cl}
			\frac{\nabla g_{J_{i}}(x)g_{J_i}(x)}{\|g_{J_i}(x)\|}(\psi^*)'(\rho\|g_{J_i}(x)\|),{\rm if}\ g_{J_i}(x)\ne 0;\\
			0,{\rm if}\ g_{J_i}(x)=0.
		\end{array}\right.
		\]
		
		\item[(ii)]~The function $\Theta_{\rho,\nu,\mu}$ is a lower-bounded, coercive DC function on $\mathbb{R}^p$.
		
		\item[(iii)]~The limit subdifferental of function $\Theta_{\rho,\nu,\mu}$ at any point $x\in\mathbb{R}^p$ satisfies
		\begin{align*}
			\partial\Theta_{\rho,\nu,\mu}(x)
			&\subset A^{\top}\partial\vartheta(Ax\!-\!b)+\mu x+\rho\nu\sum_{i=1}^m\nabla g_{J_i}(x)S_i(x)\\
			&\quad-\rho\nu\sum_{i=1}^m(\psi^*)'(\rho\|g_{J_i}(x)\|)\nabla g_{J_{i}}(x)\Lambda_i(x)\big],
		\end{align*}
		where $S_i\!:\mathbb{R}^p\rightrightarrows\mathbb{R}^{|J_i|}$ and $\Lambda_i\!:\mathbb{R}^p\rightrightarrows\mathbb{R}^{|J_i|}$ for each $i\in[m]$ are set-valued mappings in the following form
		\begin{align*}
			S_i(x)&=\left\{\begin{array}{cl}
				\!\{\frac{g_{J_i}(x)}{\|g_{J_i}(x)\|}\} &{\rm if}\ g_{J_i}(x)\ne 0;\\
				\mathbb{B}_{\mathbb{R}^{|J_i|}} &{\rm if}\ g_{J_i}(x)= 0,
			\end{array}\right.\\
			\Lambda_i(x)&=\left\{\begin{array}{cl}
				\{\frac{g_{J_i}(x)}{\|g_{J_i}(x)\|}\} &{\rm if}\ g_{J_i}(x)\ne 0;\\
				\!\{v\in\mathbb{R}^{|J_i|}\ |\ \|v\|=1\} &{\rm if}\ g_{J_i}(x)= 0.
			\end{array}\right.
		\end{align*}
		In particular, if $\phi\in\mathscr{L}_2$, for any $x\in\mathbb{R}^p$, it holds that
		\begin{align*}
		&\partial\Theta_{\rho,\nu,\mu}(x)=A^{\top}\partial\vartheta(Ax\!-\!b)+\mu x+\rho\nu\sum_{i=1}^m\big[-\nabla\Gamma_{i,\rho}(x)\\
		&+\nabla g_{J_i}(x)S_i(x)\big].
	   \end{align*}
	\end{itemize}		
\end{proposition}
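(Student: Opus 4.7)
The plan is to exploit the identity $\rho\nu\,\varphi_{\rho}(t)=\rho\nu\,t-\rho\nu\,\Gamma_{i,\rho}$, which exposes $\Theta_{\rho,\nu,\mu}$ as an explicit DC decomposition, and then to combine part (i) with standard convex and limiting subdifferential calculus to derive parts (ii) and (iii) in turn.

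For (i), since $g_{J_i}(x)=B_{J_i}x$ is linear, $x\mapsto\|g_{J_i}(x)\|$ is convex, and by Lemma~\ref{psi-star} the outer function $\psi^*$ is finite, nondecreasing, convex and continuously differentiable on $\mathbb{R}$ under $\phi\in\mathscr{L}_1$. The convex composite chain rule then yields that $\Gamma_{i,\rho}$ is finite convex with
\[
\partial\Gamma_{i,\rho}(x)=(\psi^*)'(\rho\|g_{J_i}(x)\|)\,\nabla g_{J_i}(x)\,\partial\|\cdot\|(g_{J_i}(x)),
\]
which reduces to the stated case split depending on whether $g_{J_i}(x)$ vanishes (the unit ball at $0$, the normalised vector otherwise). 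The $C^1$-claim under $\phi\in\mathscr{L}_2$ and the accompanying gradient formula are then immediate from Lemma~\ref{smoothness} applied with $F=g_{J_i}$ and $p=2$.

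For (ii), substituting $\rho\nu\,\varphi_{\rho}(\|g_{J_i}(x)\|)=\rho\nu\|g_{J_i}(x)\|-\rho\nu\,\Gamma_{i,\rho}(x)$ into $\Theta_{\rho,\nu,\mu}$ produces the explicit DC form $\Theta_{\rho,\nu,\mu}=h-\rho\nu\sum_i\Gamma_{i,\rho}$, where $h(x):=\vartheta(Ax-b)+(\mu/2)\|x\|^2+\rho\nu\sum_i\|g_{J_i}(x)\|$ is convex and each $\Gamma_{i,\rho}$ is convex by (i). Lower boundedness will follow from the standing assumption that $\vartheta$ is bounded below together with $\varphi_{\rho}(t)\ge 0$ for $t\ge 0$; I will verify the latter via $\varphi_{\rho}(0)=0$ (using $\psi^*(0)=-\min_{[0,1]}\phi=0$) and $\varphi_{\rho}'(t)=1-(\psi^*)'(\rho t)\ge 0$ (since $(\psi^*)'(\omega)\in[0,1]$ as the unique $[0,1]$-valued maximiser in the definition of $\psi^*$). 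Coercivity is immediate from the dominant $(\mu/2)\|x\|^2$ term.

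For (iii), apply the limiting sum rule to $\Theta_{\rho,\nu,\mu}=h-\rho\nu\sum_i\Gamma_{i,\rho}$: since each $\Gamma_{i,\rho}$ is locally Lipschitz, one has $\partial\Theta_{\rho,\nu,\mu}(x)\subseteq\partial h(x)+\partial(-\rho\nu\sum_i\Gamma_{i,\rho})(x)$. The convex sum rule handles $\partial h$, giving $A^{\top}\partial\vartheta(Ax-b)+\mu x+\rho\nu\sum_i\nabla g_{J_i}(x)S_i(x)$. For the concave term I will unfold $\partial(-\Gamma_{i,\rho})(x)$ from the definition of the limiting subdifferential as cluster points of gradients at nearby $C^1$-points: away from $\{g_{J_i}=0\}$, $\Gamma_{i,\rho}$ is $C^1$ with gradient from (i), so $-\nabla\Gamma_{i,\rho}$ is well-defined there; at $x_0$ with $g_{J_i}(x_0)=0$, sending $x\to x_0$ produces cluster points of the form $-(\psi^*)'(0)\nabla g_{J_i}(x_0)u$ with $\|u\|=1$, which is exactly $-(\psi^*)'(0)\nabla g_{J_i}(x_0)\Lambda_i(x_0)$. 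Combining delivers the first inclusion. Under $\phi\in\mathscr{L}_2$, Lemma~\ref{smoothness} makes $\Gamma_{i,\rho}$ globally $C^1$, so the sum rule holds with equality and $\partial(-\Gamma_{i,\rho})(x)=\{-\nabla\Gamma_{i,\rho}(x)\}$, producing the second formula. The main subtlety is precisely this step: the appearance of the unit \emph{sphere} $\Lambda_i$ rather than the unit \emph{ball} $S_i$ in the concave part must be established from first principles, since the naive identity $\partial(-\Gamma_{i,\rho})(x)=-\partial\Gamma_{i,\rho}(x)$ fails at singular points---the limiting subdifferential of a concave function retains only extreme directions of approach, not the full convex hull.
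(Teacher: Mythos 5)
Your proposal is correct and follows essentially the same route as the paper: the same DC decomposition $\Theta_{\rho,\nu,\mu}=h-\rho\nu\sum_{i}\Gamma_{i,\rho}$, the convex composite chain rule (via the monotonicity and smoothness of $\psi^*$ from Lemma~\ref{psi-star} and Lemma~\ref{smoothness}) for part (i), and the identification $\partial(-\Gamma_{i,\rho})(x)=-\partial_{B}\Gamma_{i,\rho}(x)\subset-(\psi^*)'(\rho\|g_{J_i}(x)\|)\nabla g_{J_i}(x)\Lambda_i(x)$ together with the sum rule for part (iii). The only differences are cosmetic: you verify $\varphi_{\rho}\ge 0$ explicitly for the lower bound and derive the sphere-versus-ball distinction from limits of gradients rather than quoting the B-subdifferential identity, which the paper states directly.
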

\begin{proof}
	{\bf(i)} For $z\in\mathbb{R}^l$, $h_\rho(z):=\rho^{-1}\psi^*(\rho\|z\|)$ is defined. By utilizing \cite[Theorem 10.49 \& Propersition 9.24 (b)]{RW98}, one can obtain
	\[
	\partial h_{\rho}(\overline{z})=
	\left\{\begin{array}{cl}
		\frac{\overline{z}}{\|\overline{z}\|}\partial\psi^*(\rho\|\overline{z}\|)
		&{\rm if}\ \overline{z}\ne 0;\\
		\bigcup_{\omega\in\partial\psi^*(0)}\partial(\omega\|\cdot\|)(0) &{\rm if}\ \overline{z}=0.
	\end{array}\right.
	\]
	Since $\phi\in\mathscr{L}_1$, by Lemma \ref{psi-star}, $\psi^*$ is a continuously differentiable convex function. Combining the above equation yields the conclusion.
	
	\noindent
	{\bf(ii)} Note that $\Theta_{\rho,\nu,\mu}(x)=\vartheta(Ax-b)+\frac{\mu}{2}\|x\|^2+\rho\nu\sum_{i=1}^m\big[\|g_{J_i}(x)\|-\Gamma_{i,\rho}(x)\big]$. Combining the conclusion of (i) and the lower-boundedness of $\vartheta$, it can be seen that the result holds.
	
	\noindent
	{\bf(iii)} Note that $\partial(-\Gamma_{i,\rho})(x)=-\partial_B\Gamma_{i,\rho}(x)\subset-(\psi^*)'(\rho\|g_{J_i}(x)\|)\nabla g_{J_{i}}(x)\Lambda_i$. By utilizing the finite-valued convexity of function $x\mapsto\vartheta(Ax-b)+\frac{\mu}{2}\|x\|^2+\rho\nu\sum_{i=1}^m\|g_{J_i}(x)\|$, the first part of the conclusion is obtained. If $\phi\in\mathscr{L}_2$, according to the Lemma \ref{smoothness}, the function $\Gamma_{i,\rho}$ is continuously differentiable for each $i\in[m]$. Thus, the required equation holds.
\end{proof}
\subsection{Convergence analysis of proximal~MM~method}\label{subsec4.1.1}

In order to analyze the convergence of Algorithm \ref{PMM}, for each given $\rho>1$, $\nu>0$ and $\mu>0$, this paper define the following potential function
\[
\Psi_{\rho,\nu,\mu}(x,y):=\Theta_{\rho,\nu,\mu}(x)+\frac{1}{4}\|x\!-\!y\|^2_{Q}\quad\ \forall x,y\in\mathbb{R}^p.
\]
The following lemma explains the properties of iterative sequence $\{x^k\}_{k\in\mathbb{N}}$ generated by potential function $\Psi_{\rho,\nu,\mu}$ in the corresponding Algorithm~\ref{PMM}.

\begin{lemma}\label{Psi-property}
	Let $\{x^k\}_{k\in\mathbb{N}}$ be generated by Algorithm \ref{PMM}, then $\{x^k\}_{k\in\mathbb{N}}\subset \mathcal{X}$, and for each $k\in\mathbb{N}$, it holds that
	\begin{itemize}
		\item[(i)] $\Psi_{\rho,\nu,\mu}(x^k,x^{k-1})\geq\Psi_{\rho,\nu,\mu}(x^{k+1},x^k)+\frac{1}{4}\|x^{k+1}-x^k\|^2_{Q}$;
		
		\item[(ii)] If $\phi\in\mathscr{L}_2$, then there exists $\zeta^k\in\partial\Psi_{\rho,\nu,\mu}(x^k,x^{k-1})$ such that $\|\zeta^k\|\leq c_1\|x^k-x^{k-1}\|+c_2\|x^{k-1}-x^{k-2}\|$, where $c_1>0$ and $c_2>0$ are constants independent of $k$.
	\end{itemize}
\end{lemma}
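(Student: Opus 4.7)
\textbf{Proof proposal for Lemma \ref{Psi-property}.} The plan is to obtain (i) by combining the identity $\Xi_{\rho,\nu,\mu}(x^k,x^k)=\Theta_{\rho,\nu,\mu}(x^k)$ and the majorization inequality $\Theta_{\rho,\nu,\mu}(\cdot)\le\Xi_{\rho,\nu,\mu}(\cdot,x^k)$ with the $(\mu I+Q_k)$-strong convexity of the subproblem \eqref{subprobk}, then absorbing the cross term generated by $\delta^k$ via a Young-type inequality calibrated to the precise bound $\|\delta^k\|\le\|Q^{1/2}(x^k-x^{k-1})\|/(\sqrt{2}\|Q^{-1/2}\|)$. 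For (ii), I would rewrite the inclusion \eqref{inclusion-xk} at index $k-1$ as one element of $\partial\Theta_{\rho,\nu,\mu}(x^k)$ (via Proposition \ref{pro-Thetarho}(iii)) plus a residual and then bound that residual using Lipschitzness of $(\psi^*)'$. The feasibility $\{x^k\}\subset\mathcal{X}$ is immediate from the $\mathcal{X}$-constraint in \eqref{x0sub} and \eqref{subprobk}.

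For (i), set $H_k(x):=\vartheta(Ax-b)+\tfrac{\mu}{2}\|x\|^2+\lambda\langle v^k,G(x)\rangle+\mathbb{I}_{\mathcal{X}}(x)+\tfrac{1}{2}\|x-x^k\|_{Q_k}^2$, which is $(\mu I+Q_k)$-strongly convex. Since $x^{k+1}$ is the exact minimizer of $H_k(\cdot)-\langle\delta^k,\cdot-x^k\rangle$, we have $\delta^k\in\partial H_k(x^{k+1})$, giving
\[
H_k(x^k)-H_k(x^{k+1})\ge\langle\delta^k,x^k-x^{k+1}\rangle+\tfrac{1}{2}\|x^{k+1}-x^k\|_{\mu I+Q_k}^2.
\]
Because $H_k(x^k)=\Xi_{\rho,\nu,\mu}(x^k,x^k)-R_{\rho,\nu,\mu}(x^k)=\Theta_{\rho,\nu,\mu}(x^k)-R_{\rho,\nu,\mu}(x^k)$ and $H_k(x^{k+1})=\Xi_{\rho,\nu,\mu}(x^{k+1},x^k)-R_{\rho,\nu,\mu}(x^k)+\tfrac{1}{2}\|x^{k+1}-x^k\|_{Q_k}^2\ge\Theta_{\rho,\nu,\mu}(x^{k+1})-R_{\rho,\nu,\mu}(x^k)+\tfrac{1}{2}\|x^{k+1}-x^k\|_{Q_k}^2$, the above descent inequality collapses to
\[
\Theta_{\rho,\nu,\mu}(x^k)-\Theta_{\rho,\nu,\mu}(x^{k+1})\ge\tfrac{1}{2}\|x^{k+1}-x^k\|_{Q_k}^2+\langle\delta^k,x^k-x^{k+1}\rangle+\tfrac{1}{2}\|x^{k+1}-x^k\|_{\mu I+Q_k}^2.
\]
Next, $\|x^{k+1}-x^k\|\le\|Q^{-1/2}\|\cdot\|x^{k+1}-x^k\|_Q$ combined with the $\delta^k$-bound gives $|\langle\delta^k,x^k-x^{k+1}\rangle|\le\tfrac{1}{\sqrt{2}}\|x^k-x^{k-1}\|_Q\|x^{k+1}-x^k\|_Q$, and applying Young's inequality with weight $\lambda=\tfrac{1}{2}$ yields the bound $\tfrac{1}{4}\|x^k-x^{k-1}\|_Q^2+\tfrac{1}{2}\|x^{k+1}-x^k\|_Q^2$. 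Using $Q_k\succeq Q$ to absorb the $\tfrac{1}{2}\|x^{k+1}-x^k\|_Q^2$ term into $\tfrac{1}{2}\|x^{k+1}-x^k\|_{Q_k}^2$ and discarding the non-negative $\tfrac{1}{2}\|x^{k+1}-x^k\|_{\mu I+Q_k}^2$ contribution down to $\tfrac{1}{2}\|x^{k+1}-x^k\|_Q^2$ produces
\[
\Theta_{\rho,\nu,\mu}(x^k)+\tfrac{1}{4}\|x^k-x^{k-1}\|_Q^2\ge\Theta_{\rho,\nu,\mu}(x^{k+1})+\tfrac{1}{2}\|x^{k+1}-x^k\|_Q^2,
\]
which is exactly (i) after splitting $\tfrac{1}{2}=\tfrac{1}{4}+\tfrac{1}{4}$.

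For (ii), applying \eqref{inclusion-xk} one step back produces $u\in\partial\vartheta(Ax^k-b)$, $s^i\in S_i(x^k)$, $\eta\in\mathcal{N}_{\mathcal{X}}(x^k)$ with $\delta^{k-1}=A^{\top}u+\mu x^k+\lambda\sum_i v_i^{k-1}\nabla g_{J_i}(x^k)s^i+Q_{k-1}(x^k-x^{k-1})+\eta$. Since $\phi\in\mathscr{L}_2$ makes each $\Gamma_{i,\rho}$ differentiable (Lemma \ref{smoothness}), Proposition \ref{pro-Thetarho}(iii) states that
\[
\tilde{\zeta}:=A^{\top}u+\mu x^k+\rho\nu\sum_i\bigl[\nabla g_{J_i}(x^k)s^i-\nabla\Gamma_{i,\rho}(x^k)\bigr]+\eta\in\partial\bigl(\Theta_{\rho,\nu,\mu}+\mathbb{I}_{\mathcal{X}}\bigr)(x^k).
\]
Subtracting the two expressions and using $\lambda v_i^{k-1}=\rho\nu(1-w_i^{k-1})$ collapses them to $\tilde{\zeta}=\delta^{k-1}-Q_{k-1}(x^k-x^{k-1})+\rho\nu\sum_i[w_i^{k-1}\nabla g_{J_i}(x^k)s^i-\nabla\Gamma_{i,\rho}(x^k)]$. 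Setting $\zeta^k:=\bigl(\tilde{\zeta}+\tfrac{1}{2}Q(x^k-x^{k-1}),\,\tfrac{1}{2}Q(x^{k-1}-x^k)\bigr)$ places $\zeta^k$ in $\partial\Psi_{\rho,\nu,\mu}(x^k,x^{k-1})$ by the chain rule on $\Psi_{\rho,\nu,\mu}$. To estimate $\|\zeta^k\|$: the $\delta^{k-1}$-bound from S.2 supplies a term of the form $\text{const}\cdot\|x^{k-1}-x^{k-2}\|$; the proximal piece $(Q_{k-1}+\tfrac{1}{2}Q)(x^k-x^{k-1})$ and $\zeta^k_y$ supply a term of the form $\text{const}\cdot\|x^k-x^{k-1}\|$; and for each $i$ the per-group residual $\rho\nu\|w_i^{k-1}\nabla g_{J_i}(x^k)s^i-\nabla\Gamma_{i,\rho}(x^k)\|$ equals $\rho\nu|w_i^{k-1}-w_i^k|\cdot\|\nabla g_{J_i}(x^k)\|$ when $g_{J_i}(x^k)\neq 0$ (because $S_i(x^k)$ is then the singleton $g_{J_i}(x^k)/\|g_{J_i}(x^k)\|$) and $\rho\nu w_i^{k-1}\|\nabla g_{J_i}(x^k)\|$ when $g_{J_i}(x^k)=0$ (since $w_i^k=(\psi^*)'(0)=0$ for $\phi\in\mathscr{L}_2$), both of which are $\le\rho\nu|w_i^{k-1}-w_i^k|\cdot\|B_{J_i}\|$. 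Lipschitz continuity of $(\psi^*)'$ on the relevant range (via Lemma \ref{psi-star} when $\phi$ is strongly convex, or locally on the bounded level set guaranteed by (i) and the coercivity in Proposition \ref{pro-Thetarho}(ii)) then gives $|w_i^{k-1}-w_i^k|\le L\rho\|B_{J_i}\|\cdot\|x^{k-1}-x^k\|$, so collecting all contributions yields $\|\zeta^k\|\le c_1\|x^k-x^{k-1}\|+c_2\|x^{k-1}-x^{k-2}\|$.

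The principal obstacle I anticipate is controlling $|w_i^{k-1}-w_i^k|$, since membership in $\mathscr{L}_2$ alone does not force $(\psi^*)'$ to be globally Lipschitz: the argument must either invoke strong convexity of $\phi$ (Lemma \ref{psi-star}) or deduce local Lipschitzness on the bounded sublevel set supplied by the descent in (i) together with coercivity of $\Theta_{\rho,\nu,\mu}$. A minor but genuine subtlety is the treatment of the normal-cone element $\eta$, which requires interpreting the potential's subdifferential as that of $\Theta_{\rho,\nu,\mu}+\mathbb{I}_{\mathcal{X}}$ at $x^k\in\mathcal{X}$ so that $\eta$ may be legitimately absorbed into $\zeta^k$.
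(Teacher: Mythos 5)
Your argument for part (i) is correct and is essentially the paper's: the convexity inequality \eqref{eq2} for $\psi^*$ (equivalently, the majorization $\Theta_{\rho,\nu,\mu}\le\Xi_{\rho,\nu,\mu}(\cdot,x^k)$), the strong convexity of the subproblem objective at its exact minimizer $x^{k+1}$, the estimate $|\langle\delta^k,x^k-x^{k+1}\rangle|\le\tfrac{1}{\sqrt2}\|x^k-x^{k-1}\|_Q\,\|x^{k+1}-x^k\|_Q$, and Young's inequality with weights $(\tfrac14,\tfrac12)$ lead to exactly the intermediate inequality $\Theta_{\rho,\nu,\mu}(x^k)+\tfrac14\|x^k-x^{k-1}\|_Q^2\ge\Theta_{\rho,\nu,\mu}(x^{k+1})+\tfrac12\|x^{k+1}-x^k\|_Q^2$ that the paper derives.

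For part (ii) your construction of $\zeta^k$ from $u^k$ and the two $\pm\tfrac12 Q(x^k-x^{k-1})$ blocks is the paper's, but you part ways at the identification of $u^k$ as a subgradient of $\Theta_{\rho,\nu,\mu}$ at $x^k$, and the difference is substantive. The paper writes the first-order condition of the subproblem producing $x^k$ with the weights $v_i^k$ on the groups in ${\rm gs}(g(x^k))$ (its display \eqref{temp-inclusion41}), so it matches \eqref{temp-equa41} term by term and $\delta^{k-1}+Q_{k-1}(x^{k-1}-x^k)\in\partial\Theta_{\rho,\nu,\mu}(x^k)$ follows at once, with $c_1=\|Q_0\|+\|Q\|$ and $c_2=\|Q^{1/2}\|/(\sqrt2\|Q^{-1/2}\|)$ containing no modulus of $(\psi^*)'$. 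You instead keep the weights $v_i^{k-1}$ that the subproblem actually carries — the literal reading of \eqref{inclusion-xk} — and therefore pick up the residual $\rho\nu\sum_{i\in{\rm gs}(g(x^k))}(w_i^{k-1}-w_i^k)\nabla g_{J_i}(x^k)g_{J_i}(x^k)/\|g_{J_i}(x^k)\|$, which you must show is $O(\|x^k-x^{k-1}\|)$. Your diagnosis of the resulting obstacle is accurate: Lemma \ref{psi-star} yields Lipschitz continuity of $(\psi^*)'$ only when $\phi$ is strongly convex on $[0,1]$, which $\phi\in\mathscr{L}_2$ does not imply, and your fallback to local Lipschitzness on a bounded level set does not rescue the general case either (a strictly convex $\phi\in\mathscr{L}_2$ whose second derivative vanishes at an interior point of $[0,1]$ produces a $(\psi^*)'$ that is merely H\"older there). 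So your route closes only under the additional hypothesis that $(\psi^*)'$ is Lipschitz — true for Example \ref{example2.5.2} and for the paper's numerical setting, but not for all of $\mathscr{L}_2$ — and it then delivers a larger $c_1$ depending on that Lipschitz modulus and on $\max_i\|B_{J_i}\|$. Your handling of the groups with $g_{J_i}(x^k)=0$ (absorbing $v_i^{k-1}s^i$ into the ball $S_i(x^k)$ and using $w_i^k=(\psi^*)'(0)=0$) and of the normal-cone element $\eta$, interpreted through $\Theta_{\rho,\nu,\mu}+\mathbb{I}_{\mathcal{X}}$, is correct and is exactly what the paper does implicitly.
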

\begin{proof}
	According to \eqref{inclusion-xk} of Remark \ref{remark-PPM} (b), it can be inferred that $\{x^k\}_{k\in\mathbb{N}}\subset \mathcal{X}$. Fix any $k\in\mathbb{N}$.
	
	\noindent
	{\bf(i)} Substituting $x=x^{k+1}$ and $x'=x^k$ into the equation \eqref{eq2} yields
	\begin{align*}
	&\lambda\rho^{-1}\sum\limits_{i=1}^m\psi^*(\rho\|g_{J_i}(x^{k+1})\|)-\lambda\langle w^k,G(x^{k+1})\rangle\\
	&\geq\lambda\rho^{-1}\sum\limits_{i=1}^m\psi^*(\rho\|g_{J_i}(x^k)\|)-\lambda\langle w^k,G(x^k)\rangle.
   \end{align*}
	Based on the fact that $x^{k+1}$ is the unique optimal solution of the strongly convex optimization problem \eqref{subprobk} and the feasibility of $x^{k}$, it can be obtained that
	\begin{align*}
		&\vartheta(Ax^k-b)+\frac{1}{2}\mu\|x^k\|^2+\lambda\langle e-w^k,G(x^k)\rangle\\
		&\geq \vartheta(Ax^{k+1}-b)+\frac{\mu\|x^{k+1}\|^2}{2}+\lambda\langle e-w^k,G(x^{k+1})\rangle\\
		&\quad+\|x^{k+1}-x^k\|^2_{Q_k}-\langle\delta^k,x^{k+1}-x^k\rangle.
	\end{align*}
	Adding the above two inequalities and combining the definition of $\Theta_{\rho, \nu, \mu}$ can obtain
	\begin{align*}
		&\Theta_{\rho,\nu,\mu}(x^k)\\	&\ge\Theta_{\rho,\nu,\mu}(x^{k+1})+\|x^{k+1}-x^k\|^2_{Q_k}+\langle\delta^k,x^k-x^{k+1}\rangle\\	&\ge\Theta_{\rho,\nu,\mu}(x^{k+1})+\|x^{k+1}-x^k\|^2_{Q}+\langle\delta^k,x^k-x^{k+1}\rangle\\	&\ge\Theta_{\rho,\nu,\mu}(x^{k+1})+\|x^{k+1}-x^k\|^2_{Q}-\frac{1}{2}\|Q^{1/2}(x^{k+1}-x^k)\|^2\\
		&\quad-\frac{1}{2}\|Q^{-1/2}\delta^k\|^2\\	&\geq\Theta_{\rho,\nu,\mu}(x^{k+1})+\frac{1}{2}\|x^{k+1}-x^k\|^2_{Q}-\frac{1}{4}\|x^k-x^{k-1}\|^2_{Q},
	\end{align*}
	where the second inequality is due to the semi-positive definiteness of $Q_k-Q$, the third inequality utilizes Cauchy-Schwarz inequality and the last inequality utilizes the upper bound of $\|\delta_k\|$ in Algorithm \ref{PMM}. Combining the definition of $\Psi_{\rho,\nu,\mu}$, one can observe the result holds.
	
	\noindent
	{\bf(ii)}~Since $\phi\in\mathscr{L}_2$, according to the definition of $w_{\rho}$ in \eqref{wrho} and $(\psi^*)'(0)=0$, for each $i\notin{\rm gs}(g(x^k))$, we get $w_i^k=0$. Thus, combining the definition of $x^k$ and the optimality conditions of problem \eqref{subprobk} yields
	\begin{align}\label{temp-inclusion41}
		0&\in A^{\top}\partial\vartheta(Ax^k-b)+\mu x^k+\lambda\sum_{i\in{\rm gs}(g(x^k))}v_i^k\nabla g_{J_i}(x^k)\Lambda_i(x^k)\nonumber\\
		&+\mathcal{N}_\mathcal{X}(x^k)-\lambda\sum_{i\notin{\rm gs}(g(x^k))}\nabla g_{J_{i}}(x^k)\mathbb{B}_{\mathbb{R}^{|J_i|}}-\delta^{k-1}\nonumber\\
		&+Q_{k-1}(x^k-x^{k-1}).
	\end{align}
	In addition, according to Proposition \ref{pro-Thetarho} (iii), it can be inferred that
	\begin{align}\label{temp-equa41}
		&\partial\Theta_{\!\rho,\nu,\mu}(x^k)\nonumber\\
		&=A^{\top}\partial\vartheta(Ax^k-b)+\mu x^k+\lambda\sum_{i=1}^m\big[\nabla g_{J_i}(x^k)S_i(x^k)\nonumber\\
		&\quad\quad-\nabla\Gamma_{i,\rho}(x^k)\big]+\mathcal{N}_\mathcal{X}(x^k),\nonumber\\
		&=A^{\top}\partial\vartheta(Ax^k-b)+\mu x^k+\mathcal{N}_\mathcal{X}(x^k)\nonumber\\
		&\quad+\lambda\!\sum_{i\in{\rm gs}(g(x^k))}v_i^k\nabla g_{J_i}(x^k)S_i(x^k)\nonumber\\
		&\quad-\lambda\!\sum_{i\notin{\rm gs}(g(x^k))}\nabla g_{J_{i}}(x^k)\mathbb{B}_{\mathbb{R}^{|J_i|}},
	\end{align}
	where the second inequality is due to Proposition \ref{pro-Thetarho} (i), $v^k=e-w_{\rho}(x^k)$ and the definition of $w_{i,\rho}$. From \eqref{temp-inclusion41} and \eqref{temp-equa41}, it is obtained that
	$u^k:=\delta^{k-1}+Q_{k-1}(x^{k-1}-x^k)\in\partial\Theta_{\rho,\nu,\mu}(x^k)$. Then we have
	\begin{align*}
	&\zeta^k:=\Big(u^k+\frac{1}{2}Q(x^k-x^{k-1});\frac{1}{2}Q(x^{k-1}-x^k)\Big)\\
	&\quad\in\partial\Psi_{\rho,\nu,\mu}(x^k,x^{k-1}).
   \end{align*}
	It is noted that
	\begin{align*}
		\|\zeta^k\|&\leq\|\delta^{k-1}\|+\|Q_{k-1}\|\|x^k-x^{k-1}\|+\|Q\|\|x^k-x^{k-1}\|\\ &\leq\frac{\|Q^{1/2}\|\|x^{k-1}-x^{k-2}\|}{\sqrt{2}\|Q^{-1/2}\|}+(\|Q_0\|+\|Q\|)\|x^k-x^{k-1}\|,
	\end{align*}
	where the last inequality is due to $\|\delta^{k-1}\|\le\frac{\|Q^{1/2}\|\|x^{k-1}-x^{k-2}\|}{\sqrt{2}\|Q^{-1/2}\|}$ and $\|Q_{k-1}\|\le\|Q_0\|$. Pick $c_1=\|Q_0\|+\|Q\|$ and $c_2=\frac{\|Q^{1/2}\|}{\sqrt{2}\|Q^{-1/2}\|}$, the proof is completed.
\end{proof}
Lemma \ref{Psi-property} (i) implies that iterative sequence $\{\Psi_{\rho,\nu,\mu}(x^k,x^{k-1})\}_{k\in\mathbb{N}}$ is nonincreasing. It's noted from Proposition \ref{pro-Thetarho} (ii) that the function $\Psi_{\rho,\nu,\mu}$ is lower-bounded. Thus, the sequence $\{\Psi_{\rho,\nu,\mu}(x^k,x^{k-1})\}_{k\in\mathbb{N}}$ is convergent.
The following lemma mainly prove that every cluster point of the sequence $\{x^k\}_{k\in\mathbb{N}}$ generated by Algorithm \ref{PMM} is the critical point of function $\Theta_{\!\rho,\nu,\mu}$.
\begin{lemma}\label{cluster-point-pmm}
	Denote the set composed of cluster points of sequence $\{x^k\}_{k\in\mathbb{N}}$ by $C(x^0)$. Then, we have the following results:
	\begin{itemize}
		\item[(i)]~$C(x^0)\subset \mathcal{X}$ is a nonempty compact set and $C(x^0)\subseteq{\rm crit}\Theta_{\!\rho,\nu,\mu}$;
		
		\item[(ii)]~$D:=\{(x,x)\ |x\in C(x^0)\}\subset{\rm crit}\Theta_{\!\rho,\nu,\mu}$ and $\lim_{k\rightarrow\infty}{\rm dist}((x^k,x^{k-1}),D)=0$;
		
		\item[(iii)]~The function $\Psi_{\rho,\nu,\mu}$  remains constant on set $D$. 	
	\end{itemize}
\end{lemma}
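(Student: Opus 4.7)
The plan is to verify (i), (ii), (iii) in order, relying on Lemma \ref{Psi-property}.

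For (i), I would first show $\{x^k\}$ is bounded. By Lemma \ref{Psi-property}(i) the sequence $\Psi_{\rho,\nu,\mu}(x^k,x^{k-1})$ is nonincreasing, so $\Theta_{\rho,\nu,\mu}(x^k)\le\Psi_{\rho,\nu,\mu}(x^1,x^0)$ for all $k$; coercivity of $\Theta_{\rho,\nu,\mu}$ (Proposition \ref{pro-Thetarho}(ii)) then yields boundedness, so $C(x^0)$ is nonempty; as the accumulation set of a bounded sequence it is automatically closed and bounded, hence compact. Since $\mathcal{X}$ is closed and $\{x^k\}\subset\mathcal{X}$, also $C(x^0)\subset\mathcal{X}$. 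Next, telescoping the descent inequality of Lemma \ref{Psi-property}(i) together with the lower-boundedness of $\Theta_{\rho,\nu,\mu}$ gives $\sum_{k\ge 1}\|x^{k+1}-x^k\|_Q^2<\infty$, and since $Q\succeq\underline{\gamma_1}I$, this forces $\|x^{k+1}-x^k\|\to 0$.

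Under the standing assumption $\phi\in\mathscr{L}_2$ (needed for Lemma \ref{Psi-property}(ii)), fix $\bar x\in C(x^0)$ and a subsequence $x^{k_j}\to\bar x$; automatically $x^{k_j-1},x^{k_j-2}\to\bar x$. From $\Psi_{\rho,\nu,\mu}(x,y)=\Theta_{\rho,\nu,\mu}(x)+\tfrac14\|x-y\|_Q^2$ and the sum rule one obtains
\[
\partial\Psi_{\rho,\nu,\mu}(x,y)=\big\{\big(u+\tfrac12Q(x-y),\,-\tfrac12Q(x-y)\big)\,:\,u\in\partial\Theta_{\rho,\nu,\mu}(x)\big\},
\]
so Lemma \ref{Psi-property}(ii) supplies $u^{k_j}\in\partial\Theta_{\rho,\nu,\mu}(x^{k_j})$ with $u^{k_j}\to 0$. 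Since every non-indicator summand of $\Theta_{\rho,\nu,\mu}$ is continuous on $\mathbb{R}^p$, we get $\Theta_{\rho,\nu,\mu}(x^{k_j})\to\Theta_{\rho,\nu,\mu}(\bar x)$, and closedness of the limiting subdifferential delivers $0\in\partial\Theta_{\rho,\nu,\mu}(\bar x)$, completing (i).

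For (ii), I read the stated ``${\rm crit}\,\Theta_{\rho,\nu,\mu}$'' as a misprint for ``${\rm crit}\,\Psi_{\rho,\nu,\mu}$''. Restricting the subdifferential formula above to the diagonal gives $\partial\Psi_{\rho,\nu,\mu}(x,x)=\partial\Theta_{\rho,\nu,\mu}(x)\times\{0\}$, so $D\subseteq{\rm crit}\,\Psi_{\rho,\nu,\mu}$ follows at once from (i). For the distance claim, I would invoke the standard fact that for a bounded sequence $\mathrm{dist}(x^k,C(x^0))\to 0$, pick $z^k\in C(x^0)$ realizing this distance, and combine with $\|x^k-x^{k-1}\|\to 0$ via $\|x^{k-1}-z^k\|\le\|x^k-z^k\|+\|x^{k-1}-x^k\|$ to conclude $\mathrm{dist}((x^k,x^{k-1}),D)\to 0$. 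For (iii), the monotone bounded sequence $\Psi_{\rho,\nu,\mu}(x^k,x^{k-1})$ converges to some constant $c$; for any $\bar x\in C(x^0)$ with $x^{k_j}\to\bar x$, continuity of $\Theta_{\rho,\nu,\mu}$ together with $\|x^{k_j}-x^{k_j-1}\|\to 0$ yields $\Psi_{\rho,\nu,\mu}(x^{k_j},x^{k_j-1})\to\Theta_{\rho,\nu,\mu}(\bar x)=\Psi_{\rho,\nu,\mu}(\bar x,\bar x)$, so $\Psi_{\rho,\nu,\mu}\equiv c$ on $D$.

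The main obstacle is the subdifferential passage in (i): the vanishing $u^{k_j}\to 0$ alone is insufficient, and one must also secure the functional convergence $\Theta_{\rho,\nu,\mu}(x^{k_j})\to\Theta_{\rho,\nu,\mu}(\bar x)$ so as to invoke the very definition of the limiting subdifferential. Continuity of each non-indicator summand on $\mathbb{R}^p$ (which ultimately relies on Lemma \ref{psi-star} for $\psi^*$) plus closedness of $\mathcal{X}$ are exactly what is needed; everything else reduces to bookkeeping from the descent inequality and the diagonal structure of $D$.
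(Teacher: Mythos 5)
Your proposal is correct and, for parts (i) and (ii), follows essentially the same route as the paper: boundedness of $\{(x^k,x^{k-1})\}$ from the coercivity of $\Theta_{\rho,\nu,\mu}$ plus the descent inequality of Lemma~\ref{Psi-property}(i), summability of $\|x^{k+1}-x^k\|_Q^2$ forcing successive differences to vanish, the subgradient inclusion $\delta^{k_l-1}+Q_{k_l-1}(x^{k_l-1}-x^{k_l})\in\partial\Theta_{\rho,\nu,\mu}(x^{k_l})$ extracted from the optimality conditions of subproblem \eqref{subprobk}, and outer semicontinuity of the limiting subdifferential. Two small points of divergence are worth recording. First, you are actually more careful than the paper at the closedness step: invoking the outer semicontinuity of $\partial\Theta_{\rho,\nu,\mu}$ requires the function-value convergence $\Theta_{\rho,\nu,\mu}(x^{k_l})\to\Theta_{\rho,\nu,\mu}(\overline{x})$, which the paper leaves implicit and you justify via continuity of the non-indicator summands (finite convexity of $\vartheta$, Lemma~\ref{psi-star} for $\psi^*$) together with $\{x^k\}\subset\mathcal{X}$ and closedness of $\mathcal{X}$. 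Second, for (iii) you take a genuinely shorter route: the paper establishes $\upsilon^*\ge\Theta_{\rho,\nu,\mu}(\overline{x})$ by lower semicontinuity and then proves the reverse inequality by comparing $x^{k_l}$ with the feasible point $\overline{x}$ in subproblem \eqref{subprobk} and passing to the limit; you instead conclude $\Psi_{\rho,\nu,\mu}(x^{k_l},x^{k_l-1})\to\Theta_{\rho,\nu,\mu}(\overline{x})$ in one stroke from the same continuity observation. Under the standing hypotheses ($\vartheta$ finite-valued convex, $\psi^*$ finite convex, $\mathcal{X}$ closed with all iterates feasible) your shortcut is valid; the paper's two-sided argument is the one that would survive if $\vartheta$ were merely lower semicontinuous and extended-real-valued. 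Your reading of the type-mismatched inclusion $D\subset{\rm crit}\,\Theta_{\rho,\nu,\mu}$ in (ii) as ${\rm crit}\,\Psi_{\rho,\nu,\mu}$, and your explicit flagging that $\phi\in\mathscr{L}_2$ is needed wherever Lemma~\ref{Psi-property}(ii) (or the inclusion from its proof) is used, are both consistent with what the paper tacitly assumes.
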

\begin{proof}
	{\bf(i)} According to Proposition \ref{pro-Thetarho} (ii), $\Theta_{\!\rho,\nu,\mu}$ is lower-bounded coercive. By the definition of $\Psi_{\rho,\nu,\mu}$, $\Psi_{\rho,\nu,\mu}$ is also lower-bounded coercive. Combining Lemma \ref{Psi-property} (i) can infer that $\{(x^k,x^{k-1})\}_{k\in\mathbb{N}}$ is bounded, then $C(x^0)$ is a nonempty compact set. Combining $\{x^k\}_{k\in\mathbb{N}}\subset \mathcal{X}$, it can be seen that $C(x^0)\subset \mathcal{X}$. Pick any $\overline{x}\in C(x^0)$, then there exists a subsequence $\{x^{k_l}\}_{l\in\mathbb{N}}$ such that $\lim_{l\rightarrow\infty}x^{k_l}=\overline{x}$. Combining Lemma \ref{Psi-property} (i) and the convergence of $\{\Psi_{\rho,\nu,\mu}(x^k,x^{k-1})\}_{k\in\mathbb{N}}$ yields $\lim_{l\rightarrow\infty}\|x^{k_l}\!-\!x^{k_l-1}\|=0$ and then $\lim_{l\rightarrow\infty}x^{k_l-1}=\overline{x}$. In addition, due to the proof of Lemma \ref{Psi-property}, it follows
	\[
	\delta^{k_{l}-1}+Q_{k_{l}-1}(x^{k_l-1}-x^{k_l})\in\partial\Theta_{\rho,\nu,\mu}(x^{k_l}).
	\]
	Since $\|\delta^{k_{l}-1}\|\le\frac{\|Q^{1/2}\|\|x^{k_l-1}-x^{k_l-2}\|}{\sqrt{2}\|Q^{-1/2}\|}$ and $\lim_{l\rightarrow\infty}\|x^{k_l}-x^{k_l-1}\|=0$, we have $\lim_{l\rightarrow\infty}\delta^{k_{l}-1}=0$. By using the above inclusion and outer semi-continuity of subdifferential mapping
	$\partial\Theta_{\rho,\nu,\mu}$, it is derived that $0\in\partial\Theta_{\rho,\nu,\mu}(\overline{x})$. According to the arbitrariness of $\overline {x}\in C(x^0)$, the required inclusion holds.
	
	\noindent
	{\bf(ii)} From (i) and the definition of the set~$D$, the conclusion holds.
	
	\noindent
	{\bf(iii)} Denote $\upsilon^*=\lim_{k\to\infty}\Psi_{\rho,\nu,\mu}(x^k,x^{k-1})$. Pick any $(\overline{x},\overline{x})\in D$, we first demonstrate below $\upsilon^*=\Psi_{\rho,\nu,\mu}(\overline{x},\overline{x})=\Theta_{\!\rho,\nu,\mu}(\overline{x})$, and then the function $\Psi_{\rho,\nu,\mu}$ keeps unchanged on $D$. It can be inferred from $\overline{x}\in C(x^0)$ that there exists a subsequence $\{x^{k_l}\}_{l\in\mathbb{N}}$ such that $\lim_{l\rightarrow\infty}x^{k_l}=\overline{x}$. It can be obtained obviously that $\lim_{l\rightarrow\infty}\Psi_{\!\rho,\nu,\mu}(x^{k_l},x^{k_l-1})=\upsilon^*$. By the proof of (ii), it yields $\lim_{l\rightarrow\infty}\|x^{k_l}-x^{k_l-1}\|=0$. Due to the lower semicontinuity of $\Theta_{\!\rho,\nu,\mu}$ and the definition of~$\Psi_{\!\rho,\nu,\mu}$, one can obtain $v^*=\liminf_{l\rightarrow\infty}\Psi_{\!\rho,\nu,\mu}(x^{k_l})\geq\Theta_{\!\rho,\nu,\mu}(\overline{x})$. Thus, it suffices to demonstrate $v^*\le\Theta_{\!\rho,\nu,\mu}(\overline{x})$. Since $x^{k_l}$ is the optimal solution to the problem \eqref{subprobk} and~$\overline{x}$ is a feasible solution of~\eqref{subprobk}, it holds that
	\begin{align*}
		&\vartheta(Ax^{k_l}\!-\!b)+\frac{\mu}{2}\|x^{k_l}\|^2+\lambda\langle v^{k_l-1},G(x^{k_l})\rangle+\\
		&\frac{1}{2}\|x^{k_l}\!-\!x^{k_l-1}\|^2_{Q_{k_l}-1}-\langle\delta^{k_l-1},x^{k_l}\!-\!x^{k_l-1}\rangle\\
		&\le\vartheta(A\overline{x}\!-\!b)+\frac{\mu}{2}\|\overline{x}\|^2+\lambda\langle v^{k_l-1},G(\overline{x})\rangle\\
		&\quad+\frac{1}{2}\|\overline{x}-x^{k_l-1}\|^2_{Q_{k_l-1}}-\langle\delta^{k_l-1},\overline{x}-x^{k_l-1}\rangle.
	\end{align*}
	Combining $\lambda=\rho\nu$ and the definition of the function $\Theta_{\!\rho,\nu,\mu}$, the above inequality can be equivalently expressed as
	\begin{align*}
		&\Theta_{\!\rho,\nu,\mu}(x^{k_l})-\lambda\langle w_{\rho}(x^{k_l-1}),G(x^{k_l})\rangle\\
		&+\lambda\rho^{-1}\sum_{i=1}^m\psi^*(\rho\|g_{\!_{J_i}}\!(x^{k_l})\|)+\frac{1}{2}\|x^{k_l}-x^{k_l-1}\|^2_{Q_{k_l}-1}\\
		&\leq\Theta_{\!\rho,\nu,\mu}(\overline{x})-\lambda\langle w_{\rho}(x^{k_l-1}),G(\overline{x})\rangle+\lambda\rho^{-1}\sum_{i=1}^m\psi^*(\rho\|g_{\!_{J_i}}\!(\overline{x})\|)\\
		&\quad\quad+\frac{1}{2}\|\overline{x}-x^{k_l-1}\|^2_{Q_{k_l-1}}+\langle\delta^{k_l-1},x^{k_l}-\overline{x}\rangle.
	\end{align*}
	Taking the limit as $l\rightarrow\infty$ on both sides of this inequality, using the continuity of functions $w_{\rho}(\cdot)$ and $\sum_{i=1}^m\rho^{-1}\psi^*(\rho\|g_{\!_{J_i}}\!(\cdot)\|)$ and combining the boundedness of $\{Q_k\}_{k\in\mathbb{N}}$ and $\{\delta^k\}_{k\in\mathbb{N}}$ yield $\upsilon^*=\limsup_{l\rightarrow\infty}\Psi_{\!\rho,\nu,\mu}(x^{k_l})=\limsup_{l\rightarrow\infty}\Theta_{\!\rho,\nu,\mu}(x^{k_l})\leq\Theta_{\!\rho,\nu,\mu}(\overline{x})$. As a consequence, it is acquired that $\upsilon^*=\Theta_{\!\rho,\nu,\mu}(\overline{x})=\Psi_{\!\rho,\nu,\mu}(\overline{x})$ and the proof is completed.
\end{proof}

When $\Theta_{\!\rho,\nu,\mu}$ is definable in an o-minimal structure over $(\mathbb{R},+,\cdot)$, then it yields from \cite[Section~4]{Attouch10} that $\Psi_{\!\rho,\nu,\mu}$ is a KL function. In addition, if $\Theta_{\!\rho,\nu,\mu}$ has KL property of exponent $\frac{1}{2}$ at $\overline{x}\in{\rm dom}\partial\Theta_{\!\rho,\nu,\mu}$, then we obtain from \cite[Theorem~3.6]{LiPong18} that the function $\Psi_{\rho,\nu,\mu}$ has also KL property of exponent $\frac{1}{2}$ at $(\overline{x},\overline{x})$. In this way, according to Lemma \ref{Psi-property} and Lemma \ref{cluster-point-pmm}, the following convergence conclusions for Algorithm \ref{PMM} can be obtained by using an analysis method similar to \cite[Theorem~1]{Bolte14} and \cite[Theorem~2]{Attouch-Bolte09}.
\begin{theorem}
	Let $\{x^k\}_{k\in\mathbb{N}}$ be the sequence generated by Algorithm \ref{PMM}. Then, we have the following results:
	\begin{itemize}
		
		\item[(i)] If $\Theta_{\!\rho,\nu,\mu}$ is definable in an o-minimal structure over some real field, then it holds that $\sum_{k=1}^{\infty}\|x^{k+1}-x^k\|<\infty$, and then the sequence $\{x^k\}_{k\in\mathbb{N}}$ is convergent and its limit is a critical point of function $\Theta_{\!\rho,\nu,\mu}$.
		
		\item[(ii)] If $\Theta_{\rho,\nu,\mu}$ has KL property of exponent $\frac{1}{2}$ at $\overline{x}=\lim_{k\to\infty}x^k$, then the sequence $\{x^k\}_{k\in\mathbb{N}}$ is locally convergent at R-linear rate, that is to say, there exist $\overline{\nu}>0$ and $q\in(0,1)$ such that $\|x^k-\overline{x}\|\leq\overline{\nu} q^k$ holds.	
	\end{itemize}
\end{theorem}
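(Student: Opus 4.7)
The plan is to invoke the standard Attouch--Bolte--Redont--Soubeyran framework for KL-type convergence applied to the augmented potential $\Psi_{\rho,\nu,\mu}$, since the three structural ingredients required by that framework have essentially been established by the preceding lemmas. Specifically, Lemma \ref{Psi-property}(i) provides the sufficient-decrease condition
\[
\Psi_{\rho,\nu,\mu}(x^{k+1},x^{k}) \le \Psi_{\rho,\nu,\mu}(x^{k},x^{k-1}) - \tfrac{1}{4}\|x^{k+1}-x^{k}\|_{Q}^{2},
\]
Lemma \ref{Psi-property}(ii) (under $\phi\in\mathscr{L}_{2}$, as is implicit here) supplies the relative-error bound $\operatorname{dist}(0,\partial\Psi_{\rho,\nu,\mu}(x^{k},x^{k-1}))\le c_{1}\|x^{k}-x^{k-1}\|+c_{2}\|x^{k-1}-x^{k-2}\|$, and Lemma \ref{cluster-point-pmm} delivers the continuity ingredient, namely that $\operatorname{dist}((x^{k},x^{k-1}),D)\to0$, that $D\subseteq \operatorname{crit}\Psi_{\rho,\nu,\mu}$, and that $\Psi_{\rho,\nu,\mu}$ is constant on $D$.

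For part (i), I would first observe that when $\Theta_{\rho,\nu,\mu}$ is definable in an o-minimal structure, $\Psi_{\rho,\nu,\mu}$ is definable as well (sum of a definable function and the quadratic $\tfrac{1}{4}\|\cdot-\cdot\|_{Q}^{2}$), so by \cite[Section~4]{Attouch10} it is a KL function. Then I would apply the uniformized KL inequality on a neighborhood of the compact limit set $D$ at level $\Psi^{*}=\lim_{k\to\infty}\Psi_{\rho,\nu,\mu}(x^{k},x^{k-1})$. Using the concavity of the desingularizing function $\varphi$ together with the subgradient bound from Lemma \ref{Psi-property}(ii) and the sufficient decrease, a standard telescoping argument yields
\[
\sum_{k=1}^{\infty}\|x^{k+1}-x^{k}\| \;\le\; C\bigl(\varphi(\Psi_{\rho,\nu,\mu}(x^{1},x^{0})-\Psi^{*}) + \|x^{1}-x^{0}\|\bigr) \;<\;\infty,
\]
from which the Cauchy property, convergence of $\{x^{k}\}$, and the characterization of its limit as an element of $\operatorname{crit}\Theta_{\rho,\nu,\mu}$ (via Lemma \ref{cluster-point-pmm}(i)) follow at once.

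For part (ii), I would first invoke \cite[Theorem~3.6]{LiPong18} to lift the KL exponent $\tfrac{1}{2}$ from $\Theta_{\rho,\nu,\mu}$ at $\bar x$ to $\Psi_{\rho,\nu,\mu}$ at $(\bar x,\bar x)$, so that on a neighborhood of $(\bar x,\bar x)$ one has $(\Psi_{\rho,\nu,\mu}(x^{k},x^{k-1})-\Psi^{*})^{1/2}\le \tilde c\,\operatorname{dist}(0,\partial\Psi_{\rho,\nu,\mu}(x^{k},x^{k-1}))$. Combining this with the subgradient bound yields $(\Delta_{k})^{1/2}\le C_{1}\|x^{k}-x^{k-1}\|+C_{2}\|x^{k-1}-x^{k-2}\|$ where $\Delta_{k}:=\Psi_{\rho,\nu,\mu}(x^{k},x^{k-1})-\Psi^{*}$; squaring and combining with the sufficient decrease $\Delta_{k}-\Delta_{k+1}\ge \tfrac{1}{4}\underline{\gamma_{1}}\|x^{k+1}-x^{k}\|^{2}$ produces a recursion of the form $\Delta_{k+1}\le \alpha\Delta_{k}+\beta\Delta_{k-1}$ with characteristic root strictly inside the unit disk. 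This gives Q-linear decay of $\Delta_{k}$ and hence of $\|x^{k+1}-x^{k}\|$; summing a geometric tail then yields the desired R-linear bound $\|x^{k}-\bar x\|\le\bar\nu q^{k}$, following the template of \cite[Theorem~2]{Attouch-Bolte09}.

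The main technical obstacle lies in the recursive step for part (ii): because the subgradient bound involves two consecutive differences $\|x^{k}-x^{k-1}\|$ and $\|x^{k-1}-x^{k-2}\|$ (a consequence of how the inexactness vector $\delta^{k}$ propagates from the previous iterate), the standard one-step Lyapunov recursion must be replaced by a two-step recursion whose stability has to be checked by examining the spectral radius of the associated companion matrix. Once this linear-algebraic verification is completed, the remainder of the argument is purely routine bookkeeping within the Attouch--Bolte framework.
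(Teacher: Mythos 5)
Your proposal follows essentially the same route as the paper: the paper gives no detailed proof of this theorem, stating only that the conclusions follow from Lemma \ref{Psi-property}, Lemma \ref{cluster-point-pmm}, the definability/KL-exponent facts from \cite[Section~4]{Attouch10} and \cite[Theorem~3.6]{LiPong18}, and an analysis "similar to \cite[Theorem~1]{Bolte14} and \cite[Theorem~2]{Attouch-Bolte09}" — which is exactly the framework you instantiate. Your write-up is in fact more explicit than the paper's, correctly identifying the only nonroutine point (the two-step recursion caused by the lagged term $\|x^{k-1}-x^{k-2}\|$ in the subgradient bound, whose stability depends on the constants $c_1,c_2$ relative to the decrease modulus of $Q$) that both you and the paper leave to a spectral-radius verification.
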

\begin{remark}
	
	{\bf(a)} By the expression for $\Theta_{\! \rho, \nu, \mu}$, if the functions $\vartheta$ and $\psi$ are definable functions in an o-minimal structure over the same real field, and the set $\mathcal{X}$ is also definable set in o-minimum structure over the real field, then the function $\Theta_{\!\rho,\nu,\mu}$ must be definable function in the o-minimum structure over the real field. Thus, $\Psi_{\! \rho,\nu,\mu}$ is a KL function.
	
	\noindent
	{\bf(b)} According to \cite[Theorem 3.4]{Pan-Liang-Liu2023}, under certain assumptions, the critical point of the function $\Theta_ {\! \rho,\nu,\mu}$ is not only a weak stationary point of problem \eqref{DC-Sprob1} and it is also the locally optimal solution of problem \eqref{prob1}.
	
\end{remark}

\section{Statistical error bound for proximal MM method}\label{section4}
In this section, the statistical error bounds of the iterative sequence $\{x^k\}_{k\in\mathbb{N}}$ and its cluster point $\overline{x}$ generated by the Algorithm \ref{PMM} to the true solution $x^*$ are established in the case of identity mapping $g$, respectively. We suppose that the vector $b$ is obtained through the observation model \eqref{observation model}, and denote $\overline{r}=|\overline{S}|$, where $\overline{S}:={\rm gs}(x^*)=\{i:x^*_{J_i}\neq0\}$ represents the group support set of $x^*$. Since this paper focuses on the case where the sample size $n$ is less than the feature dimension $p$, namely $n<p$, the sample covariance matrix $\frac{1}{n}A^{\top}A$ is generally not positive definite, but it may be positive definite on a subset of $\mathbb{R}^p$. \cite[Section 3]{Pan-Liang-Liu2023} proves that the cluster point $\overline{x}$ of the iterative sequence $\{x^k\}_{k\in\mathbb{N}}$ generated by the Algorithm \ref{PMM} is a weakly stationary point of problem \eqref{DC-Sprob1}, and it has been proven that under certain assumptions of restricted eigenvalue conditions, this stationary point is a weakly stationary point of problem \eqref{prob1}, and it is also the local optimal solution of problem \eqref{prob1}. The subsequent analysis will be conducted under the Assumption \ref{ass0}, where for any given positive integer $r$ and constant $c>2$, the authors define
\begin{align*}
&\mathcal{C}(\overline{S},r):=\Big\{x\in\mathbb{R}^p\ |\ \exists  S\supset\overline{S}~{\rm and} ~|S|\leq r~{\rm such~that}\\
&\quad\quad\quad~\sum_{i\in S^c}\|x_{J_i}\|\leq\frac{2(c+1)}{c-2}\sum_{i\in S}\|x_{J_i}\|\Big\}.
\end{align*}
\begin{assumption}\label{ass0}
	There exists a constant $\kappa>0$ such that $\frac{1}{2n}\|Ax\|^2\geq\kappa\|x\|^2$ holds for all $x\in\mathcal{C}(\overline{S},1.5\overline{r})$.
\end{assumption}

\subsection{The case where the function~$\vartheta$ is strongly square convex}\label{sec4.2.1}
This subsection will consider the case that $\vartheta$ is a finite-valued convex function and its square is a strongly convex function with a modulus of $\overline{\rho}$. Firstly, the authors review the definition of strongly convex functions and then provide the definition of squared strongly convex functions.

\begin{definition}\label{strongly-convex1}
	Suppose $f:\mathbb{R}^p\to\overline{\mathbb{R}}$, if there exists a constant $\sigma>0$ such that $x\mapsto f(x)-\frac{\sigma}{2}\|x\|^2$ is a convex function, then $f$ is called a strongly convex function with a modulus of $\sigma$, where $\sigma$ is called strongly convex parameter of $f$.
\end{definition}

\begin{definition}\label{strongly-convex2}
	Suppose $f:\mathbb{R}^p\to\overline{\mathbb{R}}$, if the function $f^2$ is a strongly convex function, then $f$ is called a squared strongly convex function.
\end{definition}

\begin{assumption}\label{ass1}
	{\bf(i)} $\vartheta\!:\mathbb{R}^n\to\mathbb{R}$ is finite-valued convex and globally Lipschitz continuous on the set~$\mathcal{Z}\!:=\{Az-b\,|\,z\in \mathcal{X}\}$ with Lipschitz modulus~$L_{\vartheta}$, it simultaneously satisfies
	\begin{align}\label{ineq-ass1}
		&\vartheta(0)\le 0,\ \vartheta(u+v)\le\vartheta(u)+\vartheta(v)\nonumber\\
		&{\rm and}\ \vartheta(-u)=\vartheta(u)\quad\ \forall u,v\in\mathbb{R}^n.
	\end{align}
	
	\noindent
	{\bf(ii)} $\vartheta^2$ is a strongly convex function with modulus~$\overline{\rho}$.
\end{assumption}
\begin{remark}\label{remark-ass1}
	{\bf(a)} According to \eqref{ineq-ass1}, it can be easily verified that $\vartheta$ is nonnegative and $\vartheta(0)=0$.
	
	\noindent	
	{\bf(b)} Since $\vartheta^2$ is a finite-valued and strongly convex function, so $\partial\vartheta^2(z)\ne\emptyset$ holds for any $z\in\mathbb{R}^p$. By \cite[Theorem 10.49]{RW98}, we have $\partial[\vartheta^2(z)]=2\vartheta(z)\partial\vartheta(z)$ for any $z\in\mathbb{R}^p$. Combining $\vartheta(0)=0$ yields that the subdifferential of $\vartheta^2$ at $z=0$ is a single-valued set $\{0\}$, then we know the function $\vartheta^2$ is differentiable at $z=0$.
	
	\noindent
	{\bf(c)} According to the Assumptions \ref{ass1} (ii) and $\vartheta(0)=0$, it holds that $\vartheta^2(z)\ge\frac{1}{2}\overline{\rho}\|z\|^2>0$ for any $z\ne 0$.
\end{remark}
\begin{lemma}\label{lemma-vtheta}
	If $\vartheta\!:\mathbb{R}^n\to\mathbb{R}$ satisfies Assumption \ref{ass1}, then for arbitrary $z\in \mathcal{Z}\backslash\{0\}$ and $\xi\in\partial\vartheta^2(z)$, it holds that
	\[
	\|A^{\top}\xi\|/\vartheta(z)\le 2L_{\vartheta}\|A\|.
	\]
\end{lemma}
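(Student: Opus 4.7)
My plan is to reduce the bound to two standard ingredients: the chain rule for $\partial\vartheta^2$ recorded in Remark \ref{remark-ass1}(b), and the classical fact that subgradients of a convex function are bounded by its Lipschitz modulus.

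First I would fix $z\in\mathcal{Z}\setminus\{0\}$ and $\xi\in\partial\vartheta^2(z)$. By Remark \ref{remark-ass1}(b) we have $\partial\vartheta^2(z)=2\vartheta(z)\partial\vartheta(z)$, so there exists $\eta\in\partial\vartheta(z)$ with $\xi=2\vartheta(z)\eta$. Note that $\vartheta(z)>0$ by Remark \ref{remark-ass1}(c), which justifies dividing by $\vartheta(z)$ later and also ensures $\eta$ is uniquely determined through $\xi=2\vartheta(z)\eta$ without sign ambiguity.

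Next I would invoke the Lipschitz bound on $\vartheta$. For any $\eta\in\partial\vartheta(z)$ with $z\in\mathcal{Z}$, the convex subgradient inequality $\vartheta(z+td)-\vartheta(z)\ge t\langle\eta,d\rangle$ combined with the $L_\vartheta$-Lipschitz property of $\vartheta$ on $\mathcal{Z}$ (extended by density/continuity of the finite convex function $\vartheta$ to a small neighborhood in directions feasible within $\mathcal{Z}$, or equivalently by using that $\vartheta$ is finite convex on $\mathbb{R}^n$ and hence locally Lipschitz, with the modulus inherited from $\mathcal{Z}$) yields $\langle\eta,d\rangle\le L_\vartheta\|d\|$ for all $d$, so $\|\eta\|\le L_\vartheta$. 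Then
\[
\|A^{\top}\xi\|=2\vartheta(z)\|A^{\top}\eta\|\le 2\vartheta(z)\|A\|\|\eta\|\le 2\vartheta(z)\|A\|L_\vartheta,
\]
and dividing by $\vartheta(z)>0$ delivers the claimed inequality.

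The only subtle point I anticipate is the subgradient bound $\|\eta\|\le L_\vartheta$, since the Lipschitz modulus is stated on $\mathcal{Z}$ rather than on an open neighborhood of $z$; however, because $\vartheta$ is finite-valued convex on all of $\mathbb{R}^n$ and the image set $\mathcal{Z}$ is used only to calibrate the modulus relevant to the iterates, this step is essentially a standard consequence of convex analysis and not a genuine obstacle. The remainder of the argument is routine chain-rule bookkeeping.
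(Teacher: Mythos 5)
Your proposal is correct and follows essentially the same route as the paper's proof: apply the chain rule $\partial\vartheta^2(z)=2\vartheta(z)\partial\vartheta(z)$ from Remark \ref{remark-ass1}(b), bound the subgradient norm by the Lipschitz modulus $L_{\vartheta}$ (the paper cites \cite[Theorem 9.13]{RW98} for this), and divide by $\vartheta(z)>0$ from Remark \ref{remark-ass1}(c). The subtlety you flag about the modulus being calibrated on $\mathcal{Z}$ is handled in the paper exactly as you suggest, by appealing to the standard convex-analysis fact.
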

\begin{proof}
	Fix any $z\in \mathcal{Z}\backslash\{0\}$ and $\xi\in\partial\vartheta^2(z)$. According to Remark \ref{remark-ass1} (b), there exists $v\in\partial\vartheta(z)$ such that $\xi=2\vartheta(z)v$, then $A^{\top}\xi=2\vartheta(z)A^{\top}v$. Since $\vartheta$ is finite-valued convex and also globally Lipschitz continuous with modulus $L_{\vartheta}$ on $\mathcal{X}$, so due to \cite[Theorem 9.13]{RW98}, $\|\zeta\|\le L_{\vartheta}$ holds for any $\zeta\in\partial\vartheta(z)$ and then $\|A^{\top}\xi\|\le 2\vartheta(z)\|A^{\top}v\|\le2\vartheta(z)L_{\vartheta}\|A\|$ holds. According to Remark \ref{remark-ass1} (iii), it yields $\vartheta(z)>0$. The proof is completed.
\end{proof}

For convenience, for all $k\in\mathbb{N}$, denote $S^{k}\!:=\{i\in[m]\ |\ x^{k}_{J_i}\ne 0\},\,z^k\!:=Ax^k-b$, $\Delta x^k=x^k-x^*$ and $\xi^k:=Q_{k-1}(x^{k-1}-x^k)+\delta^{k-1}\!-\!\mu x^*$.

In order to establish the statistical error bound from iterative sequence $\{x^k\}_{k\in\mathbb{N}}$ generated by Algorithm \ref{PMM} and its cluster point $\overline{x}$ to the true solution $x^*$, this article gives a lemma, which demonstrates $\sum_{i\in (S^{k-1})^c}\|\Delta x^k_{J_i}\|$ can be bounded by $\sum_{i\in S^{k-1}}\|\Delta x^k_{J_i}\|$.
\begin{lemma}\label{compatity condition}
	Suppose $\vartheta\!:\mathbb{R}^n\to\mathbb{R}$ satisfies the Assumption \ref{ass1} and $S^{k-1}\supset\overline{S}$ and
	$\max_{i\in(S^{k-1})^c}w_i^{k-1}\le\frac{1}{2}$ hold for some $k\in\mathbb{N}$. If $\lambda\ge c(\|{G}(\xi^k)\|_\infty+2L_{\vartheta}\|A\|)$, where $c>2$ is a constant, then it holds that
	\[
	\sum_{i\in (S^{k-1})^c}\|\Delta x^k_{J_i}\|\leq\frac{2(c+1)}{c-2}\sum_{i\in S^{k-1}}\|\Delta x^k_{J_i}\|.
	\]
\end{lemma}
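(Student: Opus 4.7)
The plan is to mimic the classical compatibility/cone argument for Lasso, but with two twists: (i) the ``regularizer'' here is the weighted group seminorm $\langle v^{k-1},G(\cdot)\rangle$ rather than $\|\cdot\|_{2,1}$, and (ii) the subproblem is only solved inexactly, so the error vector $\delta^{k-1}$ and the proximal term $Q_{k-1}$ enter through $\xi^k$. The starting point is the inexact optimality inclusion \eqref{inclusion-xk} written at iterate $k$:
\[
\xi^k+\mu(x^*-x^k)\in A^{\top}\partial\vartheta(Ax^k-b)+\lambda\,\partial\langle v^{k-1},G(\cdot)\rangle(x^k)+\mathcal{N}_{\mathcal{X}}(x^k).
\]
Pairing this with $x^*-x^k\in \mathcal{X}-x^k$, using convexity of $\vartheta\circ A$, convexity of $\langle v^{k-1},G(\cdot)\rangle$ (valid because $v_i^{k-1}=1-w_i^{k-1}\in[0,1]$ by the construction of $w^{k-1}$), and $\langle\mathcal{N}_{\mathcal{X}}(x^k),x^*-x^k\rangle\le 0$, I would produce the key one-sided inequality
\[
\vartheta(Ax^k-b)-\vartheta(Ax^*-b)+\lambda\langle v^{k-1},G(x^k)-G(x^*)\rangle+\mu\|\Delta x^k\|^2\le\langle\xi^k,\Delta x^k\rangle.
\]

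The next step is to dissect each term with the group decomposition $[m]=S^{k-1}\cup(S^{k-1})^c$. Since $\overline{S}\subset S^{k-1}$, we have $x^*_{J_i}=0$ for every $i\in(S^{k-1})^c$, so $\|x^k_{J_i}\|-\|x^*_{J_i}\|=\|\Delta x^k_{J_i}\|$ there, while on $S^{k-1}$ the reverse triangle inequality gives $\|x^k_{J_i}\|-\|x^*_{J_i}\|\ge -\|\Delta x^k_{J_i}\|$. Invoking $v_i^{k-1}\ge 1/2$ on $(S^{k-1})^c$ (the standing hypothesis on $w^{k-1}$) and $v_i^{k-1}\le 1$ on $S^{k-1}$, and writing $\alpha:=\sum_{i\in(S^{k-1})^c}\|\Delta x^k_{J_i}\|$, $\beta:=\sum_{i\in S^{k-1}}\|\Delta x^k_{J_i}\|$, I obtain
\[
\lambda\langle v^{k-1},G(x^k)-G(x^*)\rangle\ge\tfrac{\lambda}{2}\alpha-\lambda\beta.
\]

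The $\vartheta$-term is controlled via the global Lipschitzness of $\vartheta$ on $\mathcal{Z}$ (both $Ax^k-b$ and $Ax^*-b$ lie in $\mathcal{Z}$ since $x^k,x^*\in\mathcal{X}$): $\vartheta(Ax^k-b)-\vartheta(Ax^*-b)\ge -L_{\vartheta}\|A\|\|\Delta x^k\|$, which is where the $L_{\vartheta}\|A\|$ factor in the $\lambda$-threshold originates (Lemma~\ref{lemma-vtheta} being the underlying reason one may calibrate with $2L_{\vartheta}\|A\|$). The right-hand side is handled by the grouped Cauchy--Schwarz bound $\langle\xi^k,\Delta x^k\rangle\le\|G(\xi^k)\|_\infty(\alpha+\beta)$. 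Using $\|\Delta x^k\|\le\alpha+\beta$ and dropping the nonnegative $\mu\|\Delta x^k\|^2$, everything reduces to the scalar inequality
\[
\bigl(\tfrac{\lambda}{2}-\|G(\xi^k)\|_\infty-L_{\vartheta}\|A\|\bigr)\alpha\le\bigl(\lambda+\|G(\xi^k)\|_\infty+L_{\vartheta}\|A\|\bigr)\beta.
\]

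Finally, plugging in the hypothesis $\lambda\ge c(\|G(\xi^k)\|_\infty+2L_{\vartheta}\|A\|)$ (so that $\|G(\xi^k)\|_\infty+L_{\vartheta}\|A\|\le\lambda/c$), the coefficient of $\alpha$ is bounded below by $\lambda(c-2)/(2c)$ and that of $\beta$ above by $\lambda(c+1)/c$, which rearranges to $\alpha\le\frac{2(c+1)}{c-2}\beta$, exactly the claim. The main obstacle I expect is bookkeeping: deriving the clean Bregman-style inequality from the inexact inclusion (keeping the $\xi^k$ term, which bundles $\delta^{k-1}$, the proximal residual, and $\mu x^*$), and pairing the factor $\tfrac{1}{2}$ inherited from $v_i^{k-1}\ge\tfrac{1}{2}$ with the size threshold on $\lambda$ to squeeze out the constants $\tfrac{c-2}{2c}$ and $\tfrac{c+1}{c}$ in the right way.
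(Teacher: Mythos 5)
Your proposal is correct, and it reaches the conclusion by a somewhat lighter route than the paper. The paper starts from the optimal-value comparison between $x^k$ and the feasible point $x^*$ in the strongly convex subproblem \eqref{subprobk}, and then lower-bounds $\vartheta(Ax^k-b)-\vartheta(Ax^*-b)$ through the \emph{squared} strong convexity of $\vartheta$ (Assumption \ref{ass1}(ii)): it picks $\zeta\in\partial\vartheta^2(Ax^*-b)$, divides by $\vartheta(z^k)+\vartheta(\varpi)$, and invokes Lemma \ref{lemma-vtheta} to get $\|G(A^{\top}\zeta)\|_\infty/\vartheta(\varpi)\le 2L_{\vartheta}\|A\|$; this requires $\varpi\neq 0$ and produces the coefficient $2L_{\vartheta}\|A\|$ in front of $\alpha+\beta$. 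You instead begin from the inexact first-order inclusion \eqref{inclusion-xk} (monotonicity of the three operators plus the normal cone pairing), which yields the same Bregman-type inequality, and you control the loss difference directly by the Lipschitz bound $|\vartheta(Ax^k-b)-\vartheta(Ax^*-b)|\le L_{\vartheta}\|A\|\,\|\Delta x^k\|\le L_{\vartheta}\|A\|(\alpha+\beta)$, which needs only Assumption \ref{ass1}(i) and no nondegeneracy of $\varpi$; since $L_{\vartheta}\|A\|\le 2L_{\vartheta}\|A\|$, your resulting scalar inequality is in fact slightly stronger and the threshold $\lambda\ge c(\|G(\xi^k)\|_\infty+2L_{\vartheta}\|A\|)$ still closes the argument with the same constants $\tfrac{c-2}{2c}$ and $\tfrac{c+1}{c}$. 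What the paper's heavier derivation buys is the retained nonnegative term $\tfrac{0.5\overline{\rho}\|A\Delta x^k\|^2}{\vartheta(z^k)+\vartheta(\varpi)}$ on the left of \eqref{ineq-errbound}, which is essential for the subsequent quantitative error bound in Theorem \ref{error bound}; your shortcut proves the cone inclusion of this lemma cleanly but would not by itself feed into that later argument. The remaining bookkeeping in your sketch (using $v_i^{k-1}\in[0,1]$, $v_i^{k-1}\ge\tfrac12$ on $(S^{k-1})^c$, $x^*_{J_i}=0$ there because $(S^{k-1})^c\subset\overline{S}^{\,c}$, and the grouped Cauchy--Schwarz bound on $\langle\xi^k,\Delta x^k\rangle$) matches the paper's and is sound.
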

\begin{proof}
	Since $x^k$ is a unique optimal solution to the strongly convex subproblem \eqref{subprobk} and $x^*\in \mathcal{X}$ is a feasible solution of this problem, so it follows that
	\begin{align*}
		&\vartheta(Ax^*\!-b)+\frac{\mu}{2}\|x^*\|^2+\lambda\langle v^{k-1},{G}(x^*)\rangle\\
		&+\frac{1}{2}\|x^*-x^{k-1}\|^2_{Q_{k-1}}-\langle\delta^{k-1},x^*-x^{k-1}\rangle\\
		&\geq\vartheta(Ax^k-b)+\frac{\mu}{2}\|x^k\|^2+\lambda\langle v^{k-1},{G}(x^k)\rangle\\
		&\quad+\frac{1}{2}\|x^k-x^{k-1}\|^2_{Q_{k-1}}-\langle\delta^{k-1},x^k-x^{k-1}\rangle\\
		&\quad+\frac{1}{2}\langle x^*-x^k,(\mu I+Q_{k-1})(x^*-x^k)\rangle.
	\end{align*}
	Adjusting the above inequalities appropriately and utilizing $\xi^k=Q_{k-1}(x^{k-1}-x^k)+\delta^{k-1}\!-\!\mu x^*$ yield that
	\begin{align}\label{ineq41-ebound}
		&\vartheta(Ax^k-b)-\vartheta(Ax^*-b)\nonumber\\
		&\leq\frac{\mu}{2}\|x^*\|^2-\frac{\mu}{2}\|x^k\|^2+\lambda\langle   v^{k-1},{G}(x^*)-{G}(x^k)\rangle\nonumber\\	&\quad+\frac{1}{2}\|x^*-x^{k-1}\|^2_{Q_{k-1}}-\frac{1}{2}\|x^k-x^{k-1}\|^2_{Q_{k-1}}\nonumber\\
		&\quad-\frac{1}{2}\langle x^*-x^k,(\mu I+Q_{k-1})(x^*-x^k)\rangle+\langle\delta^{k-1},x^k-x^*\rangle\nonumber\\
		&=-\mu\|\Delta x^k\|^2+\lambda\langle v^{k-1},{G}(x^*)-{G}(x^k)\rangle\nonumber\\
		&\quad\quad+\langle\xi^k,x^k-x^*\rangle,
	\end{align}
	where equality is due to $\frac{1}{2}[\|x^*\|^2-\|x^k\|^2]=-\frac{1}{2}\|x^k-x^*\|^2-\langle x^k-x^*,x^*\rangle$. According to the Assumption \ref{ass1} (ii) and Remark \ref{remark-ass1} (b), it holds that $\partial\vartheta(Ax^*\!-b)\ne\emptyset$. Pick any $\zeta\in\partial\vartheta^2(Ax^*-b)$, then it is obtained that
	\[
	\vartheta^2(Ax^k-b)-\vartheta^2(Ax^*-b)\ge\langle A^{\top}\zeta, x^k-x^*\rangle
	+\frac{1}{2}\overline{\rho}\|A(x^k-x^*)\|^2.
	\]
	By combining $z^k\!=Ax^k-b$, $Ax^*-b=\varpi\ne 0$ and $\vartheta(\varpi)>0$, it is obtained that
	\begin{align}\label{ineq42-ebound}
		&\vartheta(Ax^k-b)-\vartheta(Ax^*-b)\nonumber\\	&=\frac{\vartheta^2(Ax^k-b)-\vartheta^2(Ax^*-b)}{\vartheta(Ax^k-b)+\vartheta(Ax^*-b)}\nonumber\\
		&\ge\frac{\langle A^{\top}\zeta,x^k-x^*\rangle+0.5\overline{\rho}\|A(x^k-x^*)\|^2}{\vartheta(z^k)+\vartheta(\varpi)}\nonumber\\
		&\ge \frac{-\langle G(A^{\top}\zeta),G(\Delta x^k)\rangle+0.5\overline{\rho}\|A(x^k-x^*)\|^2}{\vartheta(z^k)+\vartheta(\varpi)}\nonumber\\
		&\ge\frac{-\langle G(A^{\top}\zeta),G(\Delta x^k)\rangle}{\vartheta(\varpi)}
		+\frac{0.5\overline{\rho}\|A(x^k-x^*)\|^2}{\vartheta(z^k)+\vartheta(\varpi)},
	\end{align}
	where inequality is due to the nonnegativity of function $\vartheta$. Combining \eqref{ineq42-ebound} and \eqref{ineq41-ebound} yields
	\begin{align}\label{ineq-errbound}
		&\mu\|\Delta x^k\|^2+\frac{0.5\overline{\rho}\|A\Delta x^k\|^2}{\vartheta(z^k)+\vartheta(\varpi)}\nonumber\\
		&\le\frac{1}{\vartheta(\varpi)}\langle G(A^{\top}\zeta),G(\Delta x^k)\rangle+\lambda\langle v^{k-1},{G}(x^*)-G(x^k)\rangle\nonumber\\
		&\quad+\langle\xi^k,\Delta x^k\rangle\nonumber\\
		&\le\frac{1}{\vartheta(\varpi)}\langle{G}(A^{\top}\zeta),{G}(\Delta x^k)\rangle+\sum_{i=1}^m\langle\xi^k_{J_i},\Delta x^k_{J_i}\rangle\nonumber\\
		&\quad+\lambda\sum_{i=1}^mv_i^{k-1}(\|x^*_{J_i}\|-\|x^k_{J_i}\|)\nonumber\\
		&\le\frac{\langle{G}(A^{\top}\zeta),{G}(\Delta x^k)\rangle}{\vartheta(\varpi)}+\langle{G}(\xi^k),{G}(\Delta x^k)\rangle\nonumber\\
		&\quad+\lambda\sum_{i\in\overline{S}}v_i^{k-1}(\|x^*_{J_i}\|-\|x^k_{J_i}\|)-\lambda\sum_{i\in\overline{S}^c}v_i^{k-1}\|x^k_{J_i}\|\\
		&\leq\frac{\langle{G}(A^{\top}\zeta),{G}(\Delta x^k)\rangle}{\vartheta(\varpi)}+\lambda\sum_{i\in\overline{S}}v_i^{k-1}\|\Delta x^k_{J_i}\|\nonumber\\
		&\quad-\lambda\sum_{i\in (S^{k-1})^c}v_i^{k-1}\|\Delta x^k_{J_i}\|+\langle{G}(\xi^k),{G}(\Delta x^k)\rangle,\nonumber
	\end{align}
	where the third inequality is due to the fact that $x^*_{J_i}=0$ holds for all $i\in\overline{S}^c$, the forth inequality is obtained based on $S^{k-1}\supset\overline{S}$ and $\|x^*_{J_i}\|-\|x^k_{J_i}\|\le\|x^*_{J_i}-x^k_{J_i}\|=\|\Delta x^k_{J_i}\|$. It can be inferred from $\max_{i\in(S^{k-1})^c}w_i^{k-1}\le\frac{1}{2}$ that $\min_{i\in(S^{k-1})^c}v_i^{k-1}\ge\frac{1}{2}$. At this point, utilizing $\min_{i\in(S^{k-1})^c}v_i^{k-1}\ge\frac{1}{2}$ yields
	\begin{align*}
		0&\le\mu\|\Delta x^k\|^2+\frac{0.5\overline{\rho}\|A\Delta x^k\|^2}{\vartheta(z^k)+\vartheta(\varpi)}\\
		&\leq\frac{1}{\vartheta(\varpi)}\bigg[\sum_{i\in S^{k-1}}\|{G}(A^{\top}\zeta)\|_\infty\|\Delta x^k_{J_i}\|\\
		&\quad+\|{G}(A^{\top}\zeta)\|_\infty\sum_{i\in(S^{k-1})^c}\|\Delta x^k_{J_i}\|\bigg]\\
		&\quad+\lambda\sum_{i\in\overline{S}}\|\Delta x^k_{J_i}\|-\frac{\lambda}{2}\!\sum_{i\in (S^{k-1})^c}\|\Delta x^k_{J_i}\|\\
		&\quad+\|{G}(\xi^k)\|_\infty\bigg[\sum_{i\in S^{k-1}}\!\|\Delta x^k_{J_i}\|+\!\sum_{i\in(S^{k-1})^c}\!\|\Delta x^k_{J_i}\|\bigg].
	\end{align*}
	Reviewing $\zeta\in\partial\vartheta^2(Ax^*\!-b)$ and $\varpi\in \mathcal{Z}\backslash\{0\}$ and combining Lemma \ref{lemma-vtheta}, it holds that  $\|{G}(A^{\top}\zeta)\|_\infty/\vartheta(\varpi)\le 2L_{\vartheta}\|A\|$. Thus, one can observe that
	\begin{align*}	&\big[{\lambda}/{2}-2L_{\vartheta}\|A\|-\|{G}(\xi^k)\|_\infty\big]\sum_{i\in(S^{k-1})^c}\|\Delta x^k_{J_i}\|\\
		&\leq\frac{1}{\vartheta(\varpi)}\sum_{i\in S^{k-1}}\|{G}(A^{\top}\zeta)\|_\infty\|\Delta x^k_{J_i}\|+\lambda\sum_{i\in\overline{S}}\|\Delta x^k_{J_i}\|\\
		&\quad+\|{G}(\xi^k)\|_\infty\sum_{i\in S^{k-1}}\|\Delta x^k_{J_i}\|\\
		&\leq \big(\|{G}(\xi^k)\|_\infty+2L_{\vartheta}\|A\|+\lambda\big)\sum_{i\in S^{k-1}}\|\Delta x^k_{J_i}\|,
	\end{align*}
	which combining $\lambda\ge c(\|{G}(\xi^k)\|_\infty+2L_{\vartheta}\|A\|)$ can derive the conclusion.
\end{proof}
\begin{remark}\label{compatity}
	In Lemma \ref{compatity condition}, the specific value of the constant $c>2$ is related to the function $\phi$ defined in \eqref{phi-assump}, as shown in Example \ref{example2.5.2} and Example \ref{example2.5.3}. According to the proof in \cite[Proposition 4.1]{Zhang-Pan-Bi-Sun23}, there exists $t_0\in[0,1)$ such that~$\frac {1}{1-t^*}\in\partial\phi(t_0)$, and according to literature~\cite[Lemma 1]{LiuBiPan18}, such~$t_0$ exists. When $\phi$ is taken from Example \ref{example2.5.2}, $t^*=0$, then it yields from $1=\frac{2(a-1)t_0}{a+1}+\frac{2}{a+1}$ that $t_0=\frac{1}{2}$ is obtained. In addition, we get $c=5$ due to $\frac{2}{1-t_0}=\frac{2(c+1)}{c-2}$. Similarly, if $\phi$ is taken from Example \ref{example2.5.3}, then $t_0=\frac{a-1}{a}$ and $c=\frac{2a+1}{a-1}$, where $a>2$. At this point, it is necessary to select the appropriate $c$ to minimize the statistical error bound.
\end{remark}
The following theorem characterizes the statistical error bound from each iterate point $x^k$ generated by Algorithm \ref{PMM} to the true solution $x^*$ of problem \eqref{prob1}.
\begin{theorem}\label{error bound}
	Suppose $A$ satisfies the Assumption \ref{ass0} on the set $\mathcal{C}(\overline{S},1.5\overline{r})$ and for some $k\in\mathbb{N}$, $S^{k-1}\supset\overline{S}$, $|S^{k-1}|\le 1.5\overline{r}$ and $\max_{i\in(S^{k-1})^c}w_i^{k-1}\le\frac{1}{2}$ hold. Then, when $\lambda\in[c(\|{G}(\xi^k)\|_\infty+2L_{\vartheta}\|A\|),\frac{2\mu \vartheta(\varpi)+p\kappa\overline{\rho}-\frac{3c}{c-2}L_{\vartheta}\|A\|_{2,\infty}(L_{\vartheta}\|A\|+\|{G}(\xi^k)\|_\infty)|S^{k-1}|}{\frac{3c}{c-2}L_{\vartheta}\|A\|_{2,\infty}\|v_{\overline{S}}^{k-1}\|_{\infty}|S^{k-1}|})$, where $c>2$ is a constant, it holds that $\|\Delta x^k\|\leq\frac{d_1}{d_2}$,
where $d_1:=2\vartheta(\varpi)(L_{\vartheta}\|A\|+\|{G}(\xi^k)\|_\infty+\lambda\|v^{k-1}_{\overline{S}}\|_\infty)\sqrt{|S^{k-1}|}$ and $d_2:=2\mu \vartheta(\varpi)+n\kappa\overline{\rho}-\frac{3c}{c-2}L_{\vartheta}\|A\|_{2,\infty}(L_{\vartheta}\|A\|+\|{G}(\xi^k)\|_\infty+\lambda\|v^{k-1}_{\overline{S}}\|_\infty)|S^{k-1}|.$
\end{theorem}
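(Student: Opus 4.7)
The plan is to pick up the chain of inequalities developed inside the proof of Lemma \ref{compatity condition} and close it against the restricted eigenvalue condition of Assumption \ref{ass0}. First, note that the hypotheses $S^{k-1}\supset\overline{S}$, $|S^{k-1}|\le 1.5\overline{r}$, $\max_{i\in(S^{k-1})^c}w_i^{k-1}\le 1/2$, together with the lower bound $\lambda\ge c(\|G(\xi^k)\|_\infty+2L_{\vartheta}\|A\|)$, are exactly the premises of Lemma \ref{compatity condition}. Applying that lemma (with the set $S=S^{k-1}$) places $\Delta x^k$ in the cone $\mathcal{C}(\overline{S},1.5\overline{r})$, so Assumption \ref{ass0} gives the restricted eigenvalue bound $\|A\Delta x^k\|^2\ge 2n\kappa\|\Delta x^k\|^2$, and at the same time yields
\[
\sum_{i=1}^m\|\Delta x^k_{J_i}\|\le \tfrac{3c}{c-2}\sum_{i\in S^{k-1}}\|\Delta x^k_{J_i}\|\le \tfrac{3c}{c-2}\sqrt{|S^{k-1}|}\,\|\Delta x^k\|,
\]
together with $\|A\Delta x^k\|\le \|A\|_{2,\infty}\sum_{i=1}^m\|\Delta x^k_{J_i}\|\le \tfrac{3c}{c-2}\|A\|_{2,\infty}\sqrt{|S^{k-1}|}\,\|\Delta x^k\|$.

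Next I would start from the central estimate \eqref{ineq-errbound} derived in the proof of Lemma \ref{compatity condition},
\[
\mu\|\Delta x^k\|^2+\frac{0.5\,\overline{\rho}\,\|A\Delta x^k\|^2}{\vartheta(z^k)+\vartheta(\varpi)}\le R,
\]
and clear the denominator by multiplying through by $T:=\vartheta(z^k)+\vartheta(\varpi)>0$. The factor $2\vartheta(\varpi)$ appearing in $d_2$ comes from the lower bound $T\ge 2\vartheta(\varpi)-L_\vartheta\|A\Delta x^k\|$, which follows from subadditivity and evenness of $\vartheta$ (Assumption \ref{ass1}(i)) via $\vartheta(\varpi)\le\vartheta(z^k)+\vartheta(A\Delta x^k)$ together with $\vartheta(A\Delta x^k)\le L_\vartheta\|A\Delta x^k\|$. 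Symmetrically $T\le 2\vartheta(\varpi)+L_\vartheta\|A\Delta x^k\|$, which I will apply on the RHS. Combining these with the restricted eigenvalue bound produces
\[
(2\mu\vartheta(\varpi)+n\kappa\overline{\rho})\|\Delta x^k\|^2\le R\bigl(2\vartheta(\varpi)+L_\vartheta\|A\Delta x^k\|\bigr)+\mu L_\vartheta\|A\Delta x^k\|\,\|\Delta x^k\|^2.
\]

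Finally I would bound $R$. Splitting each of the terms $\langle G(A^{\top}\zeta),G(\Delta x^k)\rangle/\vartheta(\varpi)$ and $\langle G(\xi^k),G(\Delta x^k)\rangle$ across $S^{k-1}$ and $(S^{k-1})^c$, the coefficients on the $(S^{k-1})^c$ pieces are absorbed by the negative term $-\lambda\sum_{i\in(S^{k-1})^c}v_i^{k-1}\|\Delta x^k_{J_i}\|$ (using $v_i^{k-1}\ge 1/2$ and the hypothesis $\lambda\ge c(\|G(\xi^k)\|_\infty+2L_\vartheta\|A\|)$), exactly as in the proof of Lemma \ref{compatity condition}. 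Using Lemma \ref{lemma-vtheta} to estimate $\|G(A^{\top}\zeta)\|_\infty/\vartheta(\varpi)\le 2L_\vartheta\|A\|$ and Cauchy--Schwarz on $S^{k-1}$, one obtains
\[
R\le \bigl(L_\vartheta\|A\|+\|G(\xi^k)\|_\infty+\lambda\|v^{k-1}_{\overline S}\|_\infty\bigr)\sqrt{|S^{k-1}|}\,\|\Delta x^k\|.
\]
Substituting this back, using $\|A\Delta x^k\|\le \tfrac{3c}{c-2}\|A\|_{2,\infty}\sqrt{|S^{k-1}|}\|\Delta x^k\|$ to convert the two ``cross'' terms $L_\vartheta\|A\Delta x^k\|\cdot R$ and $\mu L_\vartheta\|A\Delta x^k\|\|\Delta x^k\|^2$ into multiples of $|S^{k-1}|\|\Delta x^k\|^2$, and moving them to the LHS, yields
\[
d_2\,\|\Delta x^k\|^2\le d_1\,\|\Delta x^k\|,
\]
from which dividing by $\|\Delta x^k\|$ (when nonzero) delivers the claim.

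The main obstacle is the bookkeeping in the last step: the sharpness of the two-sided bounds on $T$ in terms of $\|A\Delta x^k\|$ is what brings the $2\vartheta(\varpi)$ into $d_2$ (rather than a plain $\vartheta(\varpi)$), and it is precisely the upper ceiling on $\lambda$ in the hypothesis that guarantees $d_2>0$ after these cross terms are absorbed. Managing both the lower bound on $\lambda$ (needed to apply Lemma \ref{compatity condition} and to kill the $(S^{k-1})^c$ parts of $R$) and the upper bound on $\lambda$ (needed to keep $d_2$ positive) is the subtle point.
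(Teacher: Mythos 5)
Your overall route is the same as the paper's: invoke Lemma \ref{compatity condition} to place $\Delta x^k$ in $\mathcal{C}(\overline{S},1.5\overline{r})$, feed Assumption \ref{ass0} and the cone inequality back into the estimate \eqref{ineq-errbound}, bound $R$ via Lemma \ref{lemma-vtheta} and Cauchy--Schwarz on $S^{k-1}$, clear the denominator $T=\vartheta(z^k)+\vartheta(\varpi)$, and use the ceiling on $\lambda$ to keep $d_2>0$. All of that is correct and matches the paper.

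The one step that fails as written is your treatment of the $\mu T\|\Delta x^k\|^2$ term. You multiply through by $T$ and then lower-bound $T\ge 2\vartheta(\varpi)-L_\vartheta\|A\Delta x^k\|$, which deposits the extra term $\mu L_\vartheta\|A\Delta x^k\|\,\|\Delta x^k\|^2$ on the right-hand side. After substituting $\|A\Delta x^k\|\le\frac{3c}{c-2}\|A\|_{2,\infty}\sqrt{|S^{k-1}|}\,\|\Delta x^k\|$ this term is \emph{cubic} in $\|\Delta x^k\|$, not quadratic, so it cannot be ``moved to the LHS'' and folded into the constant $d_2$ without an a priori bound on $\|\Delta x^k\|$ — you would only obtain
$\bigl(d_2-\mu L_\vartheta\tfrac{3c}{c-2}\|A\|_{2,\infty}\sqrt{|S^{k-1}|}\,\|\Delta x^k\|\bigr)\|\Delta x^k\|^2\le d_1\|\Delta x^k\|$, which is not the claim. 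The paper sidesteps this entirely: it first replaces the denominator by its upper bound $T':=\vartheta(A\Delta x^k)+2\vartheta(\varpi)\ge T$ (which only shrinks the left-hand side), multiplies by $T'$, and then uses $\mu T'\|\Delta x^k\|^2\ge 2\mu\vartheta(\varpi)\|\Delta x^k\|^2$ by simply discarding the nonnegative term $\mu\,\vartheta(A\Delta x^k)\|\Delta x^k\|^2$; no cross term is ever generated, and the only product of $\vartheta(A\Delta x^k)$ with anything is $\vartheta(A\Delta x^k)\cdot R$, which is genuinely quadratic since both factors are $O(\sqrt{|S^{k-1}|}\,\|\Delta x^k\|)$. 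A minor secondary remark: you quote Lemma \ref{lemma-vtheta} as giving $\|G(A^{\top}\zeta)\|_\infty/\vartheta(\varpi)\le 2L_\vartheta\|A\|$ but then write $L_\vartheta\|A\|$ (without the factor $2$) in your bound on $R$; keep track of which constant you are actually carrying into $d_1$ and $d_2$.
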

\begin{proof}
	According to \eqref{ineq-errbound} of Lemma \ref{compatity condition} and $S^{k-1}\supset\overline{S}$, it can be inferred that
	\begin{align}\label{ineq43-ebound}
		&\mu\|\Delta x^k\|^2+\frac{0.5\overline{\rho}\|A\Delta x^k\|^2}{\vartheta(z^k)+\vartheta(\varpi)}\nonumber\\
		&\leq\frac{\langle{G}(A^{\top}\zeta),{G}(\Delta x^k)\rangle}{\vartheta(\varpi)}+\langle{G}(\xi^k),{G}(\Delta x^k)\rangle\nonumber\\
		&\quad+\lambda\sum_{i\in\overline{S}}v_i^{k-1}\|\Delta x^k_{J_i}\|-\lambda\sum_{i\notin S^{k-1}}v_i^{k-1}\|\Delta x^k_{J_i}\|\nonumber\\
		&\leq\frac{\|{G}(A^{\top}\zeta)\|_\infty}{\vartheta(\varpi)}\Big(\sum_{i\in S^{k-1}}\|\Delta x^k_{J_i}\|+\sum_{i\notin S^{k-1}}\|\Delta x^k_{J_i}\|\Big)\nonumber\\
		&\quad+\sum_{i\in S^{k-1}}\|{G}(\xi^k)\|_\infty\|\Delta x^k_{J_i}\|\nonumber\\
		&\quad+\|{G}(\xi^k)\|_\infty\sum_{i\notin S^{k-1}}\|\Delta x^k_{J_i}\|+\lambda\sum_{i\in\overline{S}}v_i^{k-1}\|\Delta x^k_{J_i}\|\nonumber\\
		&\quad-\lambda\sum_{i\notin S^{k-1}}v_i^{k-1}\|\Delta x^k_{J_i}\|\nonumber\\	&\leq\frac{\|{G}(A^{\top}\zeta)\|_\infty}{\vartheta(\varpi)}\sum_{i\in S^{k-1}}\|\Delta x^k_{J_i}\|+\|{G}(\xi^k)\|_\infty\sum_{i\in S^{k-1}}\|\Delta x^k_{J_i}\|\nonumber\\
		&\quad+\lambda\|v^{k-1}_{\overline{S}}\|_\infty\sum_{i\in\overline{S}}\|\Delta x^k_{J_i}\|,
	\end{align}
	where the last inequality is from $\lambda\geq c(\|{G}(\xi^k)\|_\infty+2L_{\vartheta}\|A\|)$. According to \eqref{ineq-ass1}, it follows that
	$$
	\vartheta(z^k)+\vartheta(\varpi)=\vartheta(A\Delta x^k-\varpi)+\vartheta(\varpi)\leq \vartheta(A\Delta x^k)+2\vartheta(\varpi).
	$$
	Therefore,~$\frac{0.5\overline{\rho}\|A\Delta x^k\|^2}{\vartheta(z^k)+\vartheta(\varpi)}\leq\frac{0.5\overline{\rho}\|A\Delta x^k\|^2}{\vartheta(A\Delta x^k)+2\vartheta(\varpi)}:=\Upsilon^k$. Combining \eqref{ineq43-ebound} yields
	\begin{align}\label{dq1}
		&\mu\|\Delta x^k\|^2+\frac{0.5\overline{\rho}\|A\Delta x^k\|^2}{\vartheta(A\Delta x^k)+2\vartheta(\varpi)}\nonumber\\
		&\leq\frac{\|{G}(A^{\top}\zeta)\|_\infty}{\vartheta(\varpi)}\sum_{i\in S^{k-1}}\|\Delta x^k_{J_i}\|+\|{G}(\xi^k)\|_\infty\sum_{i\in S^{k-1}}\|\Delta x^k_{J_i}\|\nonumber\\
		&\quad+\lambda\|v^{k-1}_{\overline{S}}\|_\infty\sum_{i\in\overline{S}}\|\Delta x^k_{J_i}\|.
	\end{align}
	From $\frac{\|A\Delta x^k\|^2}{2n}\geq\kappa\|\Delta x^k\|^2$, it holds that $\frac{n\kappa\overline{\rho}\|\Delta x^k\|^2}{\vartheta(A\Delta x^k)+2\vartheta(\varpi)}\leq{\Upsilon}^k.$ By using the Lipschitz continuity of $\vartheta$ on the set $\mathcal{Z}$ in the Assumption \ref{ass1} and combining $\vartheta(0)=0$ and $\sum_{i\in (S^{k-1})^c}\|\Delta x^k_{J_i}\|\leq\frac{2(c+1)}{c-2}\sum_{i\in S^{k-1}}\|\Delta x^k_{J_i}\|$ of Lemma \ref{compatity condition}, it holds that $\vartheta(A\Delta x^k)=\vartheta(A\Delta x^k)-\vartheta(0)=\vartheta(Ax^k-b-(Ax^*-b))-\vartheta(0)\leq L_{\vartheta}\|A\Delta x^k\|\le L_{\vartheta}\|A\|_{2,\infty}\|\Delta x^k\|\leq L_{\vartheta}\|A\|_{2,\infty}\|{G}(\Delta x^k)\|\leq L_{\vartheta}\|A\|_{2,\infty}\|{G}(\Delta x^k)\|_1\leq L_{\vartheta}\frac{3c}{c-2}\|A\|_{2,\infty}\sqrt{|S^{k-1}|}\|[{G}(\Delta x^k)]_{S^{k-1}}\|.$
	
	Multiplying $\vartheta(A\Delta x^k)+2\vartheta(\varpi)$ on both sides of inequality \eqref{dq1} yields
	\begin{align*}
		&(\mu(\vartheta(A\Delta x^k)+2\vartheta(\varpi))+n\kappa\overline{\rho})\|\Delta x^k\|^2\\
		&\leq \vartheta(A\Delta x^k)\Big(\frac{\|{G}(A^{\top}\zeta)\|_\infty}{\vartheta(\varpi)}\sum_{i\in S^{k-1}}\|\Delta x^k_{J_i}\|\\
		&\quad+\|{G}(\xi^k)\|_\infty\sum_{i\in S^{k-1}}\|\Delta x^k_{J_i}\|+\lambda\|v^{k-1}_{\overline{S}}\|_\infty\sum_{i\in\overline{S}}\|\Delta x^k_{J_i}\|\Big)\\
		&\quad\left. \right.+2\vartheta(\varpi)\Big(\frac{\|{G}(A^{\top}\zeta)\|_\infty}{\vartheta(\varpi)}\sum_{i\in S^{k-1}}\|\Delta x^k_{J_i}\|\\
		&\quad+\|{G}(\xi^k)\|_\infty\sum_{i\in S^{k-1}}\|\Delta x^k_{J_i}\|+\lambda\|v^{k-1}_{\overline{S}}\|_\infty\sum_{i\in\overline{S}}\|\Delta x^k_{J_i}\|\Big)\\
		&\leq\vartheta(A\Delta x^k)\Big(\frac{\|{G}(A^{\top}\zeta)\|_\infty}{\vartheta(\varpi)}+\|{G}(\xi^k)\|_\infty\\
		&\quad+\lambda\|v^{k-1}_{\overline{S}}\|_\infty\Big)\|[{G}(\Delta x^k)]_{S^{k-1}}\|_1\\
		&\quad\left. \right.+2\vartheta(\varpi)\Big(\frac{\|{G}(A^{\top}\zeta)\|_\infty}{\vartheta(\varpi)}+\|{G}(\xi^k)\|_\infty\\
		&\quad+\lambda\sqrt{\sum_{i\in\overline{S}}(v_i^{k-1})^2}\Big)\|[{G}(\Delta x^k)]_{S^{k-1}}\|_1\\
		&=L_{\vartheta}\frac{3c}{c-2}\|A\|_{2,\infty}\Big(\frac{\|{G}(A^{\top}\zeta)\|_\infty}{\vartheta(\varpi)}+\|{G}(\xi^k)\|_\infty\\
		&\quad+\lambda\|v^{k-1}_{\overline{S}}\|_\infty\Big)|S^{k-1}|\|\Delta x^k\|^2\\	&\quad\left.\right.+2\vartheta(\varpi)\Big(\frac{\|{G}(A^{\top}\zeta)\|_\infty}{\vartheta(\varpi)}+\|{G}(\xi^k)\|_\infty\\
		&\quad+\lambda\|v^{k-1}_{\overline{S}}\|_\infty\Big)\sqrt{|S^{k-1}|}\|\Delta x^k\|.
	\end{align*}
	So, by combining $\mu\vartheta(A\Delta x^k)\|\Delta x^k\|^2\geq0$, it is derived that
	\begin{align}\label{dqw}
		&(2\mu \vartheta(\varpi)+n\kappa\overline{\rho})\|\Delta x^k\|^2\nonumber\\
		&\leq \frac{3c}{c-2}L_{\vartheta}\|A\|_{2,\infty}\Big(\frac{\|{G}(A^{\top}\zeta)\|_\infty}{\vartheta(\varpi)}+\|{G}(\xi^k)\|_\infty\nonumber\\
		&\quad\left.\right.+\lambda\|v^{k-1}_{\overline{S}}\|_\infty\Big)|S^{k-1}|\|\Delta x^k\|^2\nonumber\\	&\quad\left.\right.+2\vartheta(\varpi)\Big(\frac{\|{G}(A^{\top}\zeta)\|_\infty}{\vartheta(\varpi)}+\|{G}(\xi^k)\|_\infty+\nonumber\\
		&\quad\left.\right.\lambda\|v^{k-1}_{\overline{S}}\|_\infty\Big)\sqrt{|S^{k-1}|}\|\Delta x^k\|.
	\end{align}
	Utilizing $\lambda<\frac{2\mu \vartheta(\varpi)+n\kappa\overline{\rho}-\frac{3c}{c-2}L_{\vartheta}\|A\|_{2,\infty}(L_{\vartheta}\|A\|+\|{G}(\xi^k)\|_\infty)|S^{k-1}|}{\frac{3c}{c-2}L_{\vartheta}\|A\|_{2,\infty}\|v_{\overline{S}}^{k-1}\|_\infty|S^{k-1}|}$ yields the conclusion.
\end{proof}
\begin{remark}\label{remark-error-bound}
	From the proof process of Theorem \ref{error bound} above, it can be inferred from the inequality \eqref{dqw} that the set on $\lambda$ in this lemma is nonempty when the sample size $n$ satisfies $n>\frac{1}{\kappa\overline{\rho}}(\frac{3c}{c-2}L_{\vartheta}\|A\|_{2,\infty}|S^{k-1}|(L_{\vartheta}\|A\|+\|{G}(\xi^k)\|_{\infty}+\lambda)-2\mu\vartheta(\varpi))$.
	From the conclusion of Theorem \ref{error bound}, when the design matrix $A$ satisfies Assumption \ref{ass0} on the set $\mathcal{C}(\overline{S},1.5\overline{r})$, it can be seen that $\frac{2\mu \vartheta(\varpi)+n\kappa\overline{\rho}-\frac{3c}{c-2}L_{\vartheta}\|A\|_{2,\infty}(L_{\vartheta}\|A\|+\|{G}(\xi^k)\|_\infty)|S^{k-1}|}{\frac{3c}{c-2}L_{\vartheta}\|A\|_{2,\infty}\|v_{\overline{S}}^{k-1}\|_{\infty}|S^{k-1}|}$ increases as $n$ increases, which then leads to an increase in the interval of $\lambda$. Additionally, the distance between the iterates $x^k$ generated by the Algorithm \ref{PMM} and the true solution of the problem will be smaller. When the parameter $\lambda$ is fixed, the error bound of the iterative sequence $\{x^k\}_{k\in\mathbb{N}}$ to the true solution decreases as the group sparsity rate increases.
\end{remark}

In the Algorithm \ref{PMM}, this article first finds an initial point $x^0$, and then calculates $w_i^k\in(\psi^*)'(\rho\|x^k_{J_i}\|)$ for $i=1,\ldots,m$ and $k\in\mathbb{N}$, and obtains $x^{k+1}$ by approximately solving a strongly convex optimization problem with a proximal term. The iterates $x^{k+1}$ cannot be guaranteed to fall into the cone $\mathcal{C}(\overline {S}, 1.5\overline{r})$ defined in this section, which poses difficulties in proving statistical error bounds. To overcome this difficulty, this section presents the iterative sequence $\{x^k\}_{k\in\mathbb{N}}$ generated by Algorithm \ref{PMM}. Assuming the cluster point $\overline{x}$ of $\{x^k\}_{k\in\mathbb{N}}$ is satisfied with that $\|\overline{x}_{J_i}\|$ is small enough in the case of $i\notin\overline{S}$, which in this way, ensures that each iterate $x^k$ can fall into the cone $\mathcal{C}(\overline{S},1.5\overline{r})$ after sufficiently large iterations.  The following Theorem \ref{error bound1} is proposed based on the above idea. Therefore, we will establish the statistical error bound between the cluster point $\overline {x}$ of the iterative sequence $\{x^k\}_{k\in\mathbb{N}}$ generated by the Algorithm \ref{PMM} and the true solution $x^*$ of problem \eqref{prob1}.

\begin{theorem}\label{error bound1}
	Suppose $A$ satisfies the Assumption \ref{ass0} on the set $\mathcal{C}(\overline{S},1.5\overline{r})$ and the noise vector $\varpi$ is nonzero. If $n>\frac{1}{\kappa\overline{\rho}}(\frac{4.5\overline{r}c}{c-2}L_{\vartheta}\|A\|_{2,\infty}(L_{\vartheta}\|A\|+\|{G}(\mu x^*)\|_{\infty}+\lambda)-2\mu\vartheta(\varpi))$ and $\lambda$ satisfies
	\begin{align*}
	&c(\|{G}(\mu x^*)\|_\infty+2L_{\vartheta}\|A\|)\le\lambda<\\
	&\frac{2\mu \vartheta(\varpi)+n\kappa\overline{\rho}-\frac{4.5\overline{r}c}{c-2}L_{\vartheta}\|A\|_{2,\infty}(L_{\vartheta}\|A\|+\|{G}(\mu x^*)\|_\infty)}{\frac{4.5\overline{r}c}{c-2}L_{\vartheta}\|A\|_{2,\infty}},
   \end{align*}
	where $c>2$ is a constant. When the cluster point $\overline{x}$ of the iterative sequence $\{x^k\}_{k\in\mathbb{N}}$ generated by Algorithm \ref{PMM} satisfies $\|\overline{x}_{J_i}\|\leq\frac{2a}{\rho(a+1)}$ for~$i\in\overline{S}^c$, it holds that
	\begin{align*}	
		&\|\overline{x}-x^*\|\leq\\
		&\frac{2\sqrt{1.5\overline{r}}\vartheta(\varpi)(L_{\vartheta}\|A\|+\|{G}(\mu x^*)\|_\infty+\lambda)}{2\mu \vartheta(\varpi)+n\kappa\overline{\rho}-\frac{4.5\overline{r}c}{c-2}L_{\vartheta}\|A\|_{2,\infty}(L_{\vartheta}\|A\|+\|{G}(\mu x^*)\|_\infty+\lambda)}.
	\end{align*}
\end{theorem}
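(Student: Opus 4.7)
The approach is to show that, along a subsequence $\{x^{k_l}\}$ converging to $\overline{x}$, the hypotheses of Theorem \ref{error bound} eventually hold at iteration index $k=k_l$, and then to pass to the limit $l\to\infty$, substituting the worst-case values $|S^{k_l-1}|=1.5\overline{r}$ and $\|v^{k_l-1}_{\overline{S}}\|_\infty=1$ into the error bound of Theorem \ref{error bound}.

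First I would extract a convergent subsequence $x^{k_l}\to\overline{x}$ from the nonempty cluster-point set $C(x^0)$ of Lemma \ref{cluster-point-pmm}(i). The proof of that lemma already gives $\|x^{k_l}-x^{k_l-1}\|\to 0$ and $\|\delta^{k_l-1}\|\to 0$; combined with boundedness of $\{Q_k\}_{k\in\mathbb{N}}$, this yields
\[
\xi^{k_l}=Q_{k_l-1}(x^{k_l-1}-x^{k_l})+\delta^{k_l-1}-\mu x^*\ \longrightarrow\ -\mu x^*,
\]
and by continuity of $G$ one has $\|G(\xi^{k_l})\|_\infty\to\|G(\mu x^*)\|_\infty$. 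Similarly, using the continuity of $w_{\rho}(\cdot)$ and the convergence $x^{k_l-1}\to\overline{x}$, the quantity $\|v^{k_l-1}_{\overline{S}}\|_\infty$ is bounded by $1$ uniformly in $l$.

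Next I would verify the support conditions of Theorem \ref{error bound} for $k=k_l$ and $l$ large, namely: (a) $S^{k_l-1}\supset\overline{S}$, (b) $|S^{k_l-1}|\leq 1.5\overline{r}$, and (c) $\max_{i\in(S^{k_l-1})^c}w_i^{k_l-1}\leq 1/2$. Item (c) is automatic because for $i\notin S^{k_l-1}$ we have $x^{k_l-1}_{J_i}=0$, and since $\phi\in\mathscr{L}_2$ Lemma \ref{smoothness} gives $(\psi^*)'(0)=0$, hence $w_i^{k_l-1}=0$. Items (a) and (b) rely crucially on the hypothesis $\|\overline{x}_{J_i}\|\leq\frac{2a}{\rho(a+1)}$ for $i\in\overline{S}^c$, which is precisely the saturation threshold of $(\psi^*)'$ in the SCAD surrogate of Example \ref{example2.5.2}. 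This bound keeps the reweight $v_{i,\rho}(\overline{x})=1-w_{i,\rho}(\overline{x})$ strictly positive on $\overline{S}^c$, so the $\ell_{2,1}$-like term in the subproblem \eqref{subprobk} continues to penalize spurious active indices; combining this with the first-order condition \eqref{inclusion-xk} and the stationarity of $\overline{x}$ (Lemma \ref{cluster-point-pmm}(i)) forces $|S^{k_l-1}\setminus\overline{S}|\leq\overline{r}/2$ for $l$ large. Containment $\overline{S}\subset S^{k_l-1}$ then follows from a continuity/nondegeneracy argument: the entries of $x^*$ on $\overline{S}$ are nonzero, so by $x^{k_l-1}\to\overline{x}$ and the stationary relationship between $\overline{x}$ and $x^*$, the corresponding blocks $x^{k_l-1}_{J_i}$ stay nonzero eventually.

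Once these conditions are in force, Theorem \ref{error bound} applied at $k=k_l$ gives
\[
\|x^{k_l}-x^*\|\leq\frac{2\vartheta(\varpi)(L_{\vartheta}\|A\|+\|G(\xi^{k_l})\|_\infty+\lambda\|v^{k_l-1}_{\overline{S}}\|_\infty)\sqrt{|S^{k_l-1}|}}{2\mu\vartheta(\varpi)+n\kappa\overline{\rho}-\tfrac{3c}{c-2}L_{\vartheta}\|A\|_{2,\infty}(L_{\vartheta}\|A\|+\|G(\xi^{k_l})\|_\infty+\lambda\|v^{k_l-1}_{\overline{S}}\|_\infty)|S^{k_l-1}|}.
\]
Since the numerator is increasing and the subtracted term in the denominator is also increasing in both $|S^{k_l-1}|$ and $\|v^{k_l-1}_{\overline{S}}\|_\infty$, substituting the worst-case values $1.5\overline{r}$ and $1$ gives a valid upper bound; the hypothesis $\lambda<\frac{2\mu\vartheta(\varpi)+n\kappa\overline{\rho}-\frac{4.5\overline{r}c}{c-2}L_\vartheta\|A\|_{2,\infty}(L_\vartheta\|A\|+\|G(\mu x^*)\|_\infty)}{\frac{4.5\overline{r}c}{c-2}L_\vartheta\|A\|_{2,\infty}}$ keeps the resulting denominator strictly positive. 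Letting $l\to\infty$, the left side converges to $\|\overline{x}-x^*\|$ and $\|G(\xi^{k_l})\|_\infty\to\|G(\mu x^*)\|_\infty$, yielding the stated bound.

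The main obstacle is the cardinality/containment claim in the second-last paragraph. A naive continuity argument $S^{k_l-1}\to\mathrm{gs}(\overline{x})$ is insufficient, since only inclusion $\mathrm{gs}(\overline{x})\subset\liminf S^{k_l-1}$ is immediate from continuity; to obtain a numerical bound like $1.5\overline{r}$ on the iterate support one must genuinely exploit that the penalty on $\overline{S}^c$ remains active because $w_{i,\rho}(\overline{x})<1$ there, and combine this with the subproblem's first-order optimality conditions. I expect this part of Step 2 to require the most technical care.
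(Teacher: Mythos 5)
Your overall strategy is the same as the paper's: show that along the sequence converging to $\overline{x}$ the hypotheses of Theorem \ref{error bound} eventually hold, then pass to the limit using $\xi^{k}\to-\mu x^*$, $\|G(\xi^{k})\|_\infty\to\|G(\mu x^*)\|_\infty$, $|S^{k-1}|\le 1.5\overline{r}$ and $\|v^{k-1}_{\overline{S}}\|_\infty\le 1$. Your observation that condition (c) is automatic — for $i\in(S^{k-1})^c$ one has $x^{k-1}_{J_i}=0$, hence $w_i^{k-1}=(\psi^*)'(0)=0$ when $\phi\in\mathscr{L}_2$ — is correct and in fact sharper than the paper, which instead derives $\max_{i\in\overline{S}^c}w_i^{k-1}\le\frac12$ from the norm threshold $\|\overline{x}_{J_i}\|\le\frac{a}{\rho(a+1)}$ combined with $x^k\to\overline{x}$ (giving $\|x^k_{J_i}\|\le\frac{1}{\rho}$ on $\overline{S}^c$, the exact $\frac12$-level of the SCAD weight \eqref{wk-equa1}).

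The genuine gap is exactly where you flagged it, and your proposed repair does not close it. You assert $|S^{k_l-1}\setminus\overline{S}|\le\overline{r}/2$ from ``the penalty remains active on $\overline{S}^c$'' plus the first-order condition \eqref{inclusion-xk}, but no quantitative mechanism is given that converts strict positivity of $v_{i,\rho}(\overline{x})$ into a bound on the \emph{number} of spurious nonzero blocks of the iterates; strict positivity of the weight is compatible with arbitrarily many tiny nonzero blocks. Worse, your argument for $\overline{S}\subseteq S^{k_l-1}$ is circular: you invoke ``$x^*_{J_i}\ne 0$ on $\overline{S}$'' together with a ``stationary relationship between $\overline{x}$ and $x^*$'', but nothing guarantees $\overline{x}_{J_i}\ne 0$ for $i\in\overline{S}$ — the proximity of $\overline{x}$ to $x^*$ is precisely the conclusion being proved, so it cannot be used to establish the hypothesis. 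For the record, the paper's own treatment of this step is also not airtight (it concludes $S^{k-1}\equiv\overline{S}$ for large $k$ from $\|x^k_{J_i}\|\le\frac{1}{\rho}$ on $\overline{S}^c$, which bounds the norms but does not force exact zeros, and it does not verify $\overline{S}\subseteq S^{k-1}$ either), so the honest assessment is that both arguments leave the support-identification conditions (a)--(b) as assumptions in disguise; but since your write-up replaces the paper's (asserted) identity $S^{k-1}\equiv\overline{S}$ with two new unproven claims, this part of your proposal cannot be accepted as a proof. Everything downstream — the monotone substitution of the worst-case values $|S^{k_l-1}|=1.5\overline{r}$, $\|v^{k_l-1}_{\overline{S}}\|_\infty=1$, positivity of the denominator under the stated range of $\lambda$, and the limit passage — is correct and coincides with the paper.
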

\begin{proof}
	For sufficiently large $k$, the authors first demonstrate $S^{k-1}\equiv\overline{S}$. Since $x_k\rightarrow\overline{x}$ holds as $k\rightarrow\infty$ and we have $\gamma_{i,k}\in[\underline{\gamma}_i,\gamma_{i,0}]$ and $\|\delta^{k-1}\|\leq\frac{\|Q^{1/2}(x^{k-1}-x^{k-2})\|}{\sqrt{2}\|Q^{-1/2}\|}$ for $i=1,2$. According to the definition of $\xi^k$, it follows $\xi^k\rightarrow-\mu x^*$ as $k\rightarrow\infty$. Therefore, as $k\rightarrow\infty$, it yields $\|\xi^k_{J_i}\|\rightarrow\|\mu x^*_{J_i}\|$ for $i=1,2,\ldots,m$. Then ${G}(\xi^k)\rightarrow{G}(\mu x^*)$ holds as $k\rightarrow\infty$. So there exists an integer $\widehat{k}>0$ such that the inequality $\frac{1}{2}\|G(\mu x^*)\|_\infty\leq\|{G}(\xi^k)\|_\infty\leq\frac{3}{2}\|{G}(\mu x^*)\|_\infty$ holds for all $k\geq\widehat{k}$. As $k\rightarrow\infty$, according to $x_k\rightarrow\overline{x}$, for arbitrarily small constant $\epsilon>0$ (we can take $\epsilon=\frac{1}{\rho(a+1)}$), there exists a positive integer $\widetilde{k}$ such that for all $i=1,2,\ldots,m$, it holds that $\|x^k_{J_i}\|-\|\overline{x}_{J_i}\|\leq\|x^k_{J_i}-\overline{x}_{J_i}\|\leq\frac{1}{\rho(a+1)}$ for all $k\geq\widetilde{k}$. According to the assumption on~$\overline{x}$, namely $\|\overline{x}_{J_i}\|\leq\frac{a}{\rho(a+1)}$ holds for~$i\notin\overline{S}$. It is inferred that $\|x^k_{J_i}\|\leq\|\overline{x}_{J_i}\|+\frac{1}{\rho(a+1)}\leq\frac{a}{\rho(a+1)}+\frac{1}{\rho(a+1)}=\frac{1}{\rho}$ holds for $i\notin\overline{S}$. Take $\overline{k}=\max\{\widetilde{k},\widehat{k}\}$, then it is obtained that $S^{k-1}\equiv\overline{S}$ for all $k\geq\overline{k}+1$. As $k\geq\overline{k}+1$, combining $\max_{i\in\overline{S}^c}w^{k-1}_i\leq\frac{1}{2}$ follows $\|v^{k-1}_{\overline{S}}\|_\infty\leq1$. Therefore, by selecting the $\lambda$ that satisfies Theorem \ref{error bound} and combining it with Theorem \ref{error bound}, it can be obtained that
	\begin{align*}	
	&\|\overline{x}-x^*\|\leq\\
	&\frac{2\sqrt{1.5\overline{r}}\vartheta(\varpi)(L_{\vartheta}\|A\|+\|{G}(\mu x^*)\|_\infty+\lambda)}{2\mu \vartheta(\varpi)+n\kappa\overline{\rho}-\frac{4.5\overline{r}c}{c-2}L_{\vartheta}\|A\|_{2,\infty}(L_{\vartheta}\|A\|+\|{G}(\mu x^*)\|_\infty+\lambda)}.
    \end{align*}
	The proof is finished.
\end{proof}

From Theorem \ref{error bound1}, it can be seen that when the design matrix $A$ satisfies the Assumption \ref{ass0} on the set $\mathcal{C}(\overline{S},1.5\overline{r})$ and the sample size $n$ satisfies $n>\frac{1}{\kappa\overline{\rho}}(\frac{4.5\overline{r}c}{c-2}L_{\vartheta}\|A\|_{2,\infty}(L_{\vartheta}\|A\|+\|{G}(\mu x^*)\|_{\infty}+\lambda)-2\mu\vartheta(\varpi))$, the value range of $\lambda$ is nonempty and the interval of $\lambda$ increases as $n$ increases. In addition, the distance from the cluster point $\overline{x}$ of the iterative sequence $\{x^k\}_{k\in\mathbb{N}}$ generated by Algorithm~\ref{PMM} to the true solution $x^*$ of problem \eqref{prob1} will be smaller.

\subsection{The case where the loss function is a piecewise linear function}
Suppose the function $\vartheta(z):=\frac{1}{n}\sum^n_{i=1}\theta(z_i)$, where $z=Ax-b$. This article makes the following assumptions about the $\theta$ function.
\begin{assumption}\label{theta}
	Suppose $\theta:\mathbb{R}\rightarrow\mathbb{R}^+$ is a convex function and satisfies $\theta(0)=0$ and also $\theta$ is a squared strongly convex function with modulus $\overline{\rho}>0$. According to \cite[theorem~23.4]{Roc76}, there exists $\widetilde{\rho}>0$ such that $|\eta|\leq\widetilde{\rho}$ holds for any $t\in\mathbb{R}$ and any $\eta\in\partial\theta(t)$.
\end{assumption}
For convenience, denote $S^{k-1}:=\{i:x^{k-1}_{J_i}\neq0\}$, $v^k=e-w^k$, $z^k=Ax^k-b$, $\varpi=b-Ax^*$, $\Delta x^k=x^k-x^*$ and $\xi^k:=Q_{k-1}(x^{k-1}-x^k)+\delta^{k-1}-\mu x^*$.
Set $\overline{J}:=\{i\in\{1,\ldots,n\}|\varepsilon_i\neq0\}$ and $\mathcal{J}_k:=\{i\notin\overline{J}|z_i^k\neq0\}$. This paper first gives an important lemma as follows in this subsection.
\begin{lemma}\label{compatity condition2}
	For some $k\geq1$, if there exists $S^{k-1}\supseteq\overline{S}$ such that $\max_{i\in(S^{k-1})^c}w_i^{k-1}\le\frac{1}{2}$. Then for $\lambda\geq c(\|{G}(\xi^k)\|_\infty+2n^{-1}\widetilde{\rho}|\!\| A_{\overline{J}\cdot}|\!\|_1)$, where $c>2$ is a constant, it holds that
	$$\sum_{i\in (S^{k-1})^c}\|\Delta x^k_{J_i}\|\leq\frac{2(c+1)}{c-2}\sum_{i\in S^{k-1}}\|\Delta x^k_{J_i}\|.$$
\end{lemma}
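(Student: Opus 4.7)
The plan is to parallel the proof of Lemma~\ref{compatity condition}, replacing the squared-strongly-convex mechanism by a direct convex subgradient argument tailored to the separable piecewise linear loss $\vartheta(z)=n^{-1}\sum_{i=1}^n\theta(z_i)$.

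First I would invoke that $x^k$ is the unique minimizer of the strongly convex subproblem~\eqref{subprobk} and that $x^*\in\mathcal{X}$ is feasible; after the same rearrangement as in the derivation of \eqref{ineq41-ebound} (using $\xi^k=Q_{k-1}(x^{k-1}-x^k)+\delta^{k-1}-\mu x^*$) this yields
\[
\vartheta(Ax^k-b)-\vartheta(Ax^*-b)\le -\mu\|\Delta x^k\|^2+\lambda\langle v^{k-1},G(x^*)-G(x^k)\rangle+\langle\xi^k,\Delta x^k\rangle.
\]

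Second, I need a matching convex lower bound on the left side. I would pick $\zeta\in\partial\vartheta(Ax^*-b)=\partial\vartheta(-\varpi)$ componentwise: for $i\in\overline{J}^c$, $\varpi_i=0$, and since $\theta\ge 0=\theta(0)$ means $0$ is a global minimizer of $\theta$, $0\in\partial\theta(0)$ allows me to choose $\zeta_i=0$; for $i\in\overline{J}$, Assumption~\ref{theta} gives $|\zeta_i|\le\widetilde\rho/n$. Then $A^{\top}\zeta=A_{\overline{J}\cdot}^{\top}\zeta_{\overline{J}}$, and the group-wise estimate
\[
\|(A^{\top}\zeta)_{J_i}\|\le \frac{\widetilde\rho}{n}\sum_{l\in\overline{J}}\|A_{l,J_i}\|\le \frac{\widetilde\rho}{n}|\!\| A_{\overline{J}\cdot}|\!\|_1
\]
delivers $\|G(A^{\top}\zeta)\|_{\infty}\le n^{-1}\widetilde\rho|\!\| A_{\overline{J}\cdot}|\!\|_1$. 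Convexity of $\vartheta$ then gives $\vartheta(z^k)-\vartheta(z^*)\ge\langle A^{\top}\zeta,\Delta x^k\rangle\ge -\langle G(A^{\top}\zeta),G(\Delta x^k)\rangle$.

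Third, I would combine both inequalities and perform the same splitting as in \eqref{ineq-errbound}: using $\overline{S}\subseteq S^{k-1}$ and $x^*_{J_i}=0$ on $\overline{S}^c$ to rewrite the $\lambda$-term, using $\|x^*_{J_i}\|-\|x^k_{J_i}\|\le\|\Delta x^k_{J_i}\|$ on $\overline{S}$, dropping the nonnegative $\mu\|\Delta x^k\|^2$ term, and finally exploiting $\max_{i\in(S^{k-1})^c}w_i^{k-1}\le 1/2$ to deduce $v_i^{k-1}\ge 1/2$ on $(S^{k-1})^c$. The resulting inequality has the form
\[
\big(\tfrac{\lambda}{2}-\|G(\xi^k)\|_{\infty}-\|G(A^{\top}\zeta)\|_{\infty}\big)\!\sum_{i\in(S^{k-1})^c}\!\|\Delta x^k_{J_i}\|\le\big(\lambda+\|G(\xi^k)\|_{\infty}+\|G(A^{\top}\zeta)\|_{\infty}\big)\!\sum_{i\in S^{k-1}}\!\|\Delta x^k_{J_i}\|.
\]
Inserting the hypothesis $\lambda\ge c(\|G(\xi^k)\|_{\infty}+2n^{-1}\widetilde\rho|\!\| A_{\overline{J}\cdot}|\!\|_1)$, which in particular dominates $c(\|G(\xi^k)\|_{\infty}+\|G(A^{\top}\zeta)\|_{\infty})$, and computing the ratio $(1+1/c)/(1/2-1/c)=2(c+1)/(c-2)$ yields the desired bound.

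The main obstacle is the second step: one must justify the choice of the subgradient $\zeta$ supported on $\overline{J}$ (relying crucially on $0\in\partial\theta(0)$, which itself follows from $\theta\ge 0=\theta(0)$) and then bound $\|G(A^{\top}\zeta)\|_{\infty}$ by the correct group-wise matrix norm $|\!\| A_{\overline{J}\cdot}|\!\|_1$. Beyond that, the splitting and the reduction to the $2(c+1)/(c-2)$ constant are entirely algebraic and follow the template of Lemma~\ref{compatity condition}; the absence of the $\overline\rho\|A\Delta x^k\|^2/(\vartheta(z^k)+\vartheta(\varpi))$ curvature term here simply means one discards less, which is harmless for a compatibility-type inequality of this kind.
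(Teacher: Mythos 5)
Your proposal is correct, and the skeleton (optimality of $x^k$ in \eqref{subprobk}, the split of the $\lambda$-term over $\overline{S}$, $S^{k-1}\setminus\overline{S}$ and $(S^{k-1})^c$ using $v_i^{k-1}\ge\frac{1}{2}$ on $(S^{k-1})^c$, and the final ratio $(1+1/c)/(1/2-1/c)=2(c+1)/(c-2)$) matches the paper's. Where you genuinely diverge is the lower bound on $\vartheta(Ax^k-b)-\vartheta(Ax^*-b)$: the paper writes each summand as $\frac{\theta^2(z_i^k)-\theta^2(-\varpi_i)}{\theta(z_i^k)+\theta(-\varpi_i)}$, splits over $\overline{J}$ and $\mathcal{J}_k$, and invokes the strong convexity of $\theta^2$ together with $\partial(\theta^2)(-\varpi_i)=2\theta(-\varpi_i)\partial\theta(-\varpi_i)$ and $\theta(z_i^k)\le\widetilde{\rho}\|z^k\|_\infty$; this produces both the cross term $-2n^{-1}\widetilde{\rho}|\!\|A_{\overline{J}\cdot}|\!\|_1\|G(\Delta x^k)\|_1$ and an extra curvature term $\frac{0.5\overline{\rho}}{n\widetilde{\rho}}\frac{\|A\Delta x^k\|^2}{\|z^k\|_\infty+\|\varpi\|_\infty}$ on the favorable side. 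Your route is a plain first-order convexity inequality $\vartheta(z^k)-\vartheta(-\varpi)\ge\langle A^{\top}\zeta,\Delta x^k\rangle$ with a subgradient $\zeta$ chosen to vanish off $\overline{J}$ (legitimate, since $\theta\ge 0=\theta(0)$ gives $0\in\partial\theta(0)$), which is shorter, needs only Assumption \ref{theta}'s boundedness of $\partial\theta$, and even yields the slightly sharper constant $n^{-1}\widetilde{\rho}|\!\|A_{\overline{J}\cdot}|\!\|_1$ in place of $2n^{-1}\widetilde{\rho}|\!\|A_{\overline{J}\cdot}|\!\|_1$ (still dominated by the hypothesis on $\lambda$, so the $2(c+1)/(c-2)$ bound follows). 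What the paper's heavier computation buys is precisely the discarded curvature term: the chain of inequalities \eqref{ieq-errb} established inside this lemma's proof is reused verbatim in Theorem \ref{error bound2}, where that term is indispensable. So your argument fully proves the lemma as stated, but would not by itself supply the intermediate inequality the paper later leans on.
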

\begin{proof}
	Since the objective function of problem \eqref{subprobk} is a strongly convex function, a unique solution is ensured. According to the definition of~$x^k$ in \eqref{subprobk}, it yields
	\begin{align*}
		&\vartheta(Ax^*-b)+\frac{\mu}{2}\|x^*\|^2+\lambda\langle v^{k-1},{G}(x^*)\rangle+\frac{1}{2}\|x^*-x^{k-1}\|^2_{Q_{k-1}}\\
		&\quad-\langle\delta^{k-1},x^*-x^{k-1}\rangle\\
		&\geq\vartheta(Ax^k-b)+\frac{\mu}{2}\|x^k\|^2+\lambda\langle v^{k-1},{G}(x^k)\rangle+\frac{1}{2}\|x^k-x^{k-1}\|^2_{Q_{k-1}}\\
		&\quad-\langle\delta^{k-1},x^k-x^{k-1}\rangle+\frac{1}{2}\langle x^*-x^k,(\mu I+Q_{k-1})(x^*-x^k)\rangle.
	\end{align*}
	After transposition and due to $\frac{1}{2}[\|x^*\|^2-\|x^k\|^2]=-\frac{1}{2}\|x^k-x^*\|^2-\langle x^k-x^*,x^*\rangle$, one can observe that
	\begin{align*}
		&\vartheta(Ax^k-b)-\vartheta(Ax^*-b)\\
		&\leq\frac{\mu}{2}\|x^*\|^2-\frac{\mu}{2}\|x^k\|^2+\lambda\langle v^{k-1},{G}(x^*)-{G}(x^k)\rangle\\
		&\quad+\frac{1}{2}\|x^*-x^{k-1}\|^2_{Q_{k-1}}-\frac{1}{2}\|x^k-x^{k-1}\|^2_{Q_{k-1}}\\
		&\quad-\frac{1}{2}\langle x^*-x^k,(\mu I+Q_{k-1})(x^*-x^k)\rangle+\langle\delta^{k-1},x^k-x^*\rangle\\
		&=-\mu\|\Delta x^k\|^2+\lambda\langle v^{k-1},{G}(x^*)-{G}(x^k)\rangle+\langle\xi^k,x^k-x^*\rangle.
	\end{align*}
	By the assumptions on $\vartheta$, it is obtained that
	\begin{align*} 
		&\vartheta(Ax^k-b)-\vartheta(Ax^*-b)\\
		&=\frac{1}{n}\sum_{i=1}^n(\theta(z_i^k)-\theta(-\varpi_i))\\
		&=\frac{1}{n}\sum_{i\in\overline{J}}\frac{\theta^2(z_i^k)-\theta^2(-\varpi_i)}{\theta(z_i^k)
			+\theta(-\varpi_i)}+\frac{1}{n}\sum_{i\in\mathcal{J}_k}\frac{\theta^2(z_i^k)-\theta^2(-\varepsilon_i)}{\theta(z_i^k)+\theta(-\varpi_i)}\\
		&\geq\frac{1}{n}\sum_{i\in\overline{J}}\frac{\theta^2(z_i^k)-\theta^2(-\varpi_i)}{\theta(z_i^k)
			+\theta(-\varpi_i)}+\frac{1}{n}\sum_{i\in\mathcal{J}_k}\frac{\theta^2(z_i^k)-\theta^2(-\varpi_i)}{\widetilde{\rho}\|z^k\|_\infty},
	\end{align*}
	where the above inequality is from Assumption \ref{theta}. For $i=1,\ldots,n$, it follows $\theta(0)-\theta(z_i^k)\geq\overline{\eta}_i(0-z_i^k)$, where $\overline{\eta}_i\in\partial\theta(z_i^k)$, and then we get $\theta(z_i^k)\leq\widetilde{\rho}\|z^k\|_\infty$.
	By Assumption \ref{theta}, $\theta^2$ is strongly convex with modulus $\overline{\rho}$, so for each $i\in\{1,\ldots,n\}$, there exists $\widetilde{\eta}_i\in\partial(\theta^2)(-\varpi_i)$ such that
	\begin{align}\label{c1}	\theta^2(z_i^k)-\theta^2(-\varpi_i)\geq\widetilde{\eta}_i(z_i^k+\varpi_i)+0.5\overline{\rho}(z_i^k+\varpi_i)^2.
	\end{align}
	It yields from Assumption \ref{theta} that $\theta$ is a convex function and satisfies $\theta(0)=0$. Due to \cite[Theorem 10.49]{RW98}, it holds that $\partial(\theta^2)(-\varpi_i)=2D^*\theta(-\varpi_i)(\theta(-\varpi_i))$, where $D^*\theta(-\varpi_i):\mathbb{R}\rightrightarrows\mathbb{R}$
	is the coderivative of $\theta$ at $-\varpi_i$, which follows that $D^*\theta(-\varpi_i)(\theta(-\varpi_i))=\partial(\theta(-\varpi_i)\theta)(-\varpi_i)=\theta(-\varpi_i)\partial\theta(-\varpi_i)$ by combining \cite[Proposition 9.24(b)]{RW98}. Therefore, for $i\in\mathcal{J}_k$, it follows from $\varpi_i=0$ that $\theta(-\varpi_i)=0$. Then it is inferred that $\theta^2(z_i^k)-\theta^2(-\varpi_i)\geq0.5\overline{\rho}(z_i^k+\varpi_i)^2$ and
	\begin{align}\label{c2} \frac{1}{n}\sum_{i\in\mathcal{J}_k}\frac{\theta^2(z_i^k)-\theta^2(-\varpi_i)}{\widetilde{\rho}\|z^k\|_\infty}\geq\frac{0.5\overline{\rho}}{n\widetilde{\rho}}\sum_{i\in\mathcal{J}_k}\frac{(z_i^k+\varpi_i)^2}{\|z^k\|_\infty}.
	\end{align}
	For $i\in\overline{J}$, one can infer from \eqref{c1} that
	\begin{align}\label{c3}
		&\frac{1}{n}\sum_{i\in\overline{J}}\frac{\theta^2(z_i^k)-\theta^2(-\varpi_i)}{\theta(z_i^k)+\theta(-\varpi_i)}\nonumber\\
		&\geq\frac{1}{n}\sum_{i\in\overline{J}}\widetilde{\gamma}_i^k(z_i^k+\varpi_i)+\frac{0.5\overline{\rho}}{n\widetilde{\rho}}\sum_{i\in\overline{J}}\frac{(z_i^k+\varpi_i)^2}{\|z^k\|_\infty+\|\varpi\|_\infty},
	\end{align}
	where $\widetilde{\gamma}_i^k:=\frac{\widetilde{\eta}_i}{\theta(z_i^k)+\theta(-\varpi_i)}$. Note that $|\widetilde{\gamma}_i^k|\leq|\frac{\widetilde{\eta}_i}{\theta(-\varpi_i)}|\leq2\widetilde{\rho}$ holds for $i\in\overline{J}$ by Assumption \ref{theta} and $\partial(\theta^2)(-\varpi_i)=2\theta(-\varpi_i)\partial\theta(-\varpi_i)$. Therefore, combining \eqref{c1}, \eqref{c2} and \eqref{c3} yields
	\begin{align*}
		&\vartheta(Ax^k-b)-\vartheta(Ax^*-b)\\
		&\geq\frac{1}{n}\sum_{i\in\overline{J}}\frac{\theta^2(z_i^k)-\theta^2(-\varpi_i)}{\theta(z_i^k)
			+\theta(-\varpi_i)}+\frac{1}{n}\sum_{i\in\mathcal{J}_k}\frac{\theta^2(z_i^k)-\theta^2(-\varpi_i)}{\widetilde{\rho}\|z^k\|_\infty}\\
		&\geq-2n^{-1}\widetilde{\rho}|\!\|A_{\overline{J}\cdot}|\!\|_1\|G(\Delta x^k)\|_1+\frac{0.5\overline{\rho}}{n\widetilde{\rho}}\frac{\|A\Delta x^k\|^2}{\|z^k\|_\infty+\|\varpi\|_\infty}.
	\end{align*}
	So it is obtained from the previous inequality that
	\begin{align}\label{ieq-errb}
		&\mu\|\Delta x^k\|^2+\frac{0.5\overline{\rho}}{n\widetilde{\rho}}\frac{\|A\Delta x^k\|^2}{\|z^k\|_\infty+\|\varpi\|_\infty}\nonumber\\
		&\leq2n^{-1}\widetilde{\rho}|\!\| A_{\overline{J}\cdot}|\!\|_1\|G(\Delta x^k)\|_1+\lambda\langle v^{k-1},{G}(x^*)-G(x^k)\rangle\nonumber\\
		&\quad+\langle\xi^k,\Delta x^k\rangle\nonumber\\
		&\leq2n^{-1}\widetilde{\rho}|\!\| A_{\overline{J}\cdot}|\!\|_1\|G(\Delta x^k)\|_1+\lambda\sum_{i=1}^mv_i^{k-1}(\|x^*_{J_i}\|-\|x^k_{J_i}\|)\nonumber\\
		&\quad+\sum_{i=1}^m\langle\xi^k_{J_i},\Delta x^k_{J_i}\rangle\nonumber\\
		&\leq2n^{-1}\widetilde{\rho}|\!\| A_{\overline{J}\cdot}|\!\|_1\|G(\Delta x^k)\|_1+\lambda\sum_{i\in\overline{S}}v_i^{k-1}(\|x^*_{J_i}\|-\|x^k_{J_i}\|)\nonumber\\
		&\quad-\lambda\sum_{i\in\overline{S}^c}v_i^{k-1}\|x^k_{J_i}\|+\langle{G}(\xi^k),{G}(\Delta x^k)\rangle\nonumber\\
		&\leq2n^{-1}\widetilde{\rho}|\!\| A_{\overline{J}\cdot}|\!\|_1\|G(\Delta x^k)\|_1+\lambda\sum_{i\in\overline{S}}v_i^{k-1}\|\Delta x^k_{J_i}\|\nonumber\\
		&\quad-\lambda\sum_{i\in (S^{k-1})^c}v_i^{k-1}\|\Delta x^k_{J_i}\|+\langle{G}(\xi^k),{G}(\Delta x^k)\rangle\\
		&\leq2n^{-1}\widetilde{\rho}|\!\|A_{\overline{J}\cdot}|\!\|_1(\sum_{i\in S^{k-1}}\|\Delta x^k_{J_i}\|+\sum_{i\in(S^{k-1})^c}\|\Delta x^k_{J_i}\|)+\nonumber\\
		&\quad\lambda\sum_{i\in\overline{S}}\|\Delta x^k_{J_i}\|-\frac{\lambda}{2}\sum_{i\in (S^{k-1})^c}\|\Delta x^k_{J_i}\|\nonumber\\
		&\quad+\|{G}(\xi^k)\|_\infty\sum_{i\in S^{k-1}}\|\Delta x^k_{J_i}\|+\|{G}(\xi^k)\|_\infty\sum_{i\in(S^{k-1})^c}\|\Delta x^k_{J_i}\|\nonumber\\
		&\leq(2n^{-1}\widetilde{\rho}|\!\| A_{\overline{J}\cdot}|\!\|_1+\lambda+\|{G}(\xi^k)\|_\infty)\sum_{i\in S^{k-1}}\|\Delta x^k_{J_i}\|\nonumber\\
		&\quad(2n^{-1}\widetilde{\rho}|\!\| A_{\overline{J}\cdot}|\!\|_1-0.5\lambda+\|{G}(\xi^k)\|_\infty)\sum_{i\in (S^{k-1})^c}\|\Delta x^k_{J_i}\|\nonumber,
	\end{align}
	where the third inequality is from the definition of ${G}(\cdot)$ and $x^*_{J_i}=0$ for all $i\in\overline{S}^c$, the second to last inequality is due to $\min\limits_{i\in(S^{k-1})^c}v_i^{k-1}\geq\frac{1}{2}$, the last inequality is based on $S^{k-1}\supseteq\overline{S}$. By $\mu\|\Delta x^k\|^2+\frac{0.5\overline{\rho}}{n\widetilde{\rho}}\frac{\|A\Delta x^k\|^2}{\|z^k\|_\infty+\|\varpi\|_\infty}\geq0$, it holds that
	\begin{align*}
		&-(2n^{-1}\widetilde{\rho}|\!\| A_{\overline{J}\cdot}|\!\|_1-0.5\lambda+\|{G}(\xi^k)\|_\infty)\sum_{i\in (S^{k-1})^c}\|\Delta x^k_{J_i}\|\\
		&\leq(2n^{-1}\widetilde{\rho}|\!\| A_{\overline{J}\cdot}|\!\|_1+\lambda+\|{G}(\xi^k)\|_\infty)\sum_{i\in S^{k-1}}\|\Delta x^k_{J_i}\|.
	\end{align*}
	As a consequence, combining $\lambda\geq c(\|{G}(\xi^k)\|_\infty+2n^{-1}\widetilde{\rho}|\!\| A_{\overline{J}\cdot}|\!\|_1)$ again can derive the conclusion.
\end{proof}

Lemma \ref{compatity condition2} demonstrates that the left term $\sum_{i\in (S^{k-1})^c}\|\Delta x^k_{J_i}\|$ of inequality can be bounded by its right term
$\frac{2(c+1)}{c-2}\sum_{i\in S^{k-1}}\|\Delta x^k_{J_i}\|$.
Similarly, the selection of a constant $c>2$ is closely related to the function $\phi$ defined in \eqref{phi-assump}, which can be found in Remark \ref{compatity}. Under certain assumptions, the following theorem characterizes the statistical error bound from the iterates $x^k$ to the true solution $x^*$ of the problem.

\begin{theorem}\label{error bound2}
	Suppose $A$ satisfies the Assumption~\ref{ass0} on the set $\mathcal{C}(\overline{S},1.5\overline{r})$, and there exists an index set $S^{k-1}$ satisfying $S^{k-1}\supseteq \overline{S}$ and~$|S^{k-1}|\leq1.5\overline{r}$ for $k\geq1$ such that $\max\limits_{i\in (S^{k-1})^c}w_i^{k-1}\leq\frac{1}{2}$ holds. If $\lambda$ satisfies the following inequality
	\begin{align*}
	&c(\|{G}(\xi^k)\|_\infty+2n^{-1}\widetilde{\rho}|\!\| A_{\overline{J}\cdot}|\!\|_1)\le\lambda<\\
	&\frac{2\mu \|\varpi\|_\infty+\frac{\kappa\overline{\rho}}{\widetilde{\rho}}-\frac{3c\|A\|_\infty}{c-2}(\|{G}(\xi^k)\|_\infty+2n^{-1}\widetilde{\rho}|\!\| A_{\overline{J}\cdot}|\!\|_1)|S^{k-1}|}{\frac{3c\|A\|_\infty}{c-2}\|v_{\overline{S}}^{k-1}\|_\infty|S^{k-1}|},
	\end{align*}
	where $c>2$ is a constant, then it holds that
	\begin{align*}
		\|\Delta x^k\|\leq\frac{d_1}{d_2},
	\end{align*}
where $d_1:=2\|\varpi\|_\infty(\|{G}(\xi^k)\|_\infty+2n^{-1}\widetilde{\rho}|\!\| A_{\overline{J}\cdot}|\!\|_1+\lambda\|v^{k-1}_{\overline{S}}\|_\infty)\sqrt{|S^{k-1}|}$ and $d_2:=(2\mu\|\varpi\|_\infty+\frac{\overline{\rho}\kappa}{\widetilde{\rho}})-\frac{3c}{c-2}\|A\|_\infty(\|{G}(\xi^k)\|_\infty+2n^{-1}\widetilde{\rho}|\!\| A_{\overline{J}\cdot}|\!\|_1+\lambda\|v^{k-1}_{\overline{S}}\|_\infty)|S^{k-1}|$.
\end{theorem}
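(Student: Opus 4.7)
\textbf{Proof proposal for Theorem \ref{error bound2}.} The plan is to mirror the argument of Theorem \ref{error bound} but replace the strongly-convex loss ingredients with the piecewise-linear bookkeeping already carried out in Lemma \ref{compatity condition2}. The backbone inequality is \eqref{ieq-errb}, so I would first massage it into a form amenable to the restricted eigenvalue condition. Using $S^{k-1}\supseteq\overline{S}$ together with the hypothesis $\lambda\ge c(\|G(\xi^k)\|_\infty+2n^{-1}\widetilde{\rho}|\!\|A_{\overline{J}\cdot}|\!\|_1)$, the coefficient of $\sum_{i\in(S^{k-1})^c}\|\Delta x^k_{J_i}\|$ in \eqref{ieq-errb} is nonpositive and can be discarded. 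Bounding $\sum_{i\in S^{k-1}}\|\Delta x^k_{J_i}\|\le\sqrt{|S^{k-1}|}\,\|\Delta x^k\|$ then yields
\[
\mu\|\Delta x^k\|^2+\frac{0.5\overline{\rho}\|A\Delta x^k\|^2}{n\widetilde{\rho}(\|z^k\|_\infty+\|\varpi\|_\infty)}\le E_k\sqrt{|S^{k-1}|}\,\|\Delta x^k\|,
\]
where $E_k:=\|G(\xi^k)\|_\infty+2n^{-1}\widetilde{\rho}|\!\|A_{\overline{J}\cdot}|\!\|_1+\lambda\|v^{k-1}_{\overline{S}}\|_\infty$.

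Next I would activate the restricted eigenvalue hypothesis. Lemma \ref{compatity condition2} places $\Delta x^k$ inside the cone $\mathcal{C}(\overline{S},1.5\overline{r})$ since $|S^{k-1}|\le 1.5\overline{r}$ and $S^{k-1}\supseteq\overline{S}$; thus Assumption \ref{ass0} gives $\|A\Delta x^k\|^2\ge 2n\kappa\|\Delta x^k\|^2$. The factor $n$ cancels with $1/n$, producing the clean term $\frac{\overline{\rho}\kappa\|\Delta x^k\|^2}{\widetilde{\rho}(\|z^k\|_\infty+\|\varpi\|_\infty)}$ on the left, which matches the $\frac{\overline{\rho}\kappa}{\widetilde{\rho}}$ appearing in $d_2$.

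To eliminate the awkward denominator $\|z^k\|_\infty+\|\varpi\|_\infty$, I would multiply both sides by this quantity and then upper-bound it. Writing $z^k=A\Delta x^k-\varpi$ and applying the triangle inequality gives $\|z^k\|_\infty+\|\varpi\|_\infty\le\|A\Delta x^k\|_\infty+2\|\varpi\|_\infty\le\|A\|_\infty\|\Delta x^k\|_1+2\|\varpi\|_\infty$. The cone inclusion supplied by Lemma \ref{compatity condition2} further gives $\|\Delta x^k\|_1\le\frac{3c}{c-2}\sqrt{|S^{k-1}|}\,\|\Delta x^k\|$. Substituting these bounds and regrouping produces the inequality
\[
\Bigl(2\mu\|\varpi\|_\infty+\tfrac{\overline{\rho}\kappa}{\widetilde{\rho}}\Bigr)\|\Delta x^k\|^2\le\tfrac{3c}{c-2}\|A\|_\infty E_k|S^{k-1}|\,\|\Delta x^k\|^2+2\|\varpi\|_\infty E_k\sqrt{|S^{k-1}|}\,\|\Delta x^k\|,
\]
and the upper bound hypothesis on $\lambda$ is exactly what is needed to guarantee $d_2>0$, so dividing by $\|\Delta x^k\|$ and moving the quadratic term to the left yields the claimed bound $\|\Delta x^k\|\le d_1/d_2$.

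The main obstacle, as in Theorem \ref{error bound}, is the presence of the data-dependent denominator $\|z^k\|_\infty+\|\varpi\|_\infty$: unlike the strongly-convex-squared case this denominator comes from the asymmetric treatment of indices in $\overline{J}$ versus $\mathcal{J}_k$ in the proof of Lemma \ref{compatity condition2}, so one must be careful to use the operator norm $\|A\|_\infty$ (with its implicit $\ell_1$-to-$\ell_\infty$ interpretation) together with the cone estimate to turn that denominator into an additive perturbation compatible with the restricted eigenvalue bound. Once that is done, the remaining steps are algebraic rearrangement analogous to the passage from \eqref{dqw} to the conclusion of Theorem \ref{error bound}.
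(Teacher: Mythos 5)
Your proposal is correct and follows essentially the same route as the paper's proof: drop the nonpositive off-support term in \eqref{ieq-errb} using the lower bound on $\lambda$, clear the denominator $\|z^k\|_\infty+\|\varpi\|_\infty$ via the bound $\|A\Delta x^k\|_\infty+2\|\varpi\|_\infty$ combined with the cone estimate from Lemma \ref{compatity condition2}, invoke Assumption \ref{ass0}, and rearrange so that the upper bound on $\lambda$ guarantees $d_2>0$. The only slip is notational: the cone estimate controls the group norm $\|G(\Delta x^k)\|_1=\sum_{i}\|\Delta x^k_{J_i}\|$ rather than the element-wise $\|\Delta x^k\|_1$, but this does not affect the argument.
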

\begin{proof}
	According to \eqref{ieq-errb} in the proof of Lemma \ref{compatity condition2} and $S^{k-1}\supseteq\overline{S}$, we have the following inequalities
	\begin{align*}
		&\mu\|\Delta x^k\|^2+\frac{0.5\overline{\rho}}{n\widetilde{\rho}}\frac{\|A\Delta x^k\|^2}{\|z^k\|_\infty+\|\varpi\|_\infty}\\
		&\leq2n^{-1}\widetilde{\rho}|\!\| A_{\overline{J}\cdot}|\!\|_1\big(\sum_{i\in S^{k-1}}\|\Delta x^k_{J_i}\|+\sum_{i\in(S^{k-1})^c}\|\Delta x^k_{J_i}\|\big)\\
		&\quad+\lambda\sum_{i\in\overline{S}}v_i^{k-1}\|\Delta x^k_{J_i}\|-\frac{\lambda}{2}\sum_{i\in (S^{k-1})^c}\|\Delta x^k_{J_i}\|\\
		&\quad+\|{G}(\xi^k)\|_\infty(\sum_{i\in S^{k-1}}\|\Delta x^k_{J_i}\|+\sum_{i\in(S^{k-1})^c}\|\Delta x^k_{J_i}\|)\\
		&\leq\sum_{i\in S^{k-1}}\big(\frac{2\widetilde{\rho}|\!\| A_{\overline{J}\cdot}|\!\|_1}{n}+\|{G}(\xi^k)\|_\infty\big)\|\Delta x^k_{J_i}\|\\
		&\quad+(\frac{2\widetilde{\rho}|\!\| A_{\overline{J}\cdot}|\!\|_1}{n}-0.5\lambda+\|{G}(\xi^k)\|_\infty)\sum_{i\in (S^{k-1})^c}\|\Delta x^k_{J_i}\|\\
		&\quad+\sum_{i\in\overline{S}}\lambda v_i^{k-1}\|\Delta x^k_{J_i}\|\\
		&\leq\sum_{i\in S^{k-1}}(\frac{2\widetilde{\rho}|\!\| A_{\overline{J}\cdot}|\!\|_1}{n}+\|{G}(\xi^k)\|_\infty)\|\Delta x^k_{J_i}\|\sum_{i\in\overline{S}}\lambda v_i^{k-1}\|\Delta x^k_{J_i}\|\\
		&\leq(\|{G}(\xi^k)\|_\infty+\frac{2\widetilde{\rho}|\!\| A_{\overline{J}\cdot}|\!\|_1}{n}+\lambda\|v^{k-1}_{\overline{S}}\|_\infty)\|[G(\Delta x^k)]_{S^{k-1}}\|_1,
	\end{align*}
	where the third to last inequality is due to $\lambda\geq c(\|{G}(\xi^k)\|_\infty+2n^{-1}\widetilde{\rho}|\!\| A_{\overline{J}\cdot}|\!\|_1)$.
	Note that $\|z^k\|_\infty+\|\varpi\|_\infty=\|A\Delta x^k-\varpi\|_\infty+\|\varpi\|_\infty\leq\|A\Delta x^k\|_\infty+2\|\varpi\|_\infty$. Therefore, $\frac{0.5\overline{\rho}}{n\widetilde{\rho}}\frac{\|A\Delta x^k\|^2}{\|A\Delta x^k\|_\infty+2\|\varepsilon\|_\infty}\leq\frac{0.5\overline{\rho}\|A\Delta x^k\|^2}{\|z^k\|_\infty+\|\varpi\|_\infty}$ is obtained. Combining $\frac{\|A\Delta x^k\|^2}{2n}\geq\kappa\|\Delta x^k\|^2$ yields $\frac{\overline{\rho}\kappa}{\widetilde{\rho}}\frac{\|\Delta x^k\|^2}{\|A\Delta x^k\|_\infty+2\|\varpi\|_\infty}\leq\frac{0.5\overline{\rho}}{n\widetilde{\rho}}\frac{\|A\Delta x^k\|^2}{\|A\Delta x^k\|_\infty+2\|\varpi\|_\infty}.$
	Furthermore, by combining with the previous inequalities, it can be concluded that
	\begin{align}\label{c4}
		&\mu\|\Delta x^k\|^2+\frac{\overline{\rho}\kappa}{\widetilde{\rho}}\frac{\|\Delta x^k\|^2}{\|A\Delta x^k\|_\infty+2\|\varpi\|_\infty}\nonumber\\
		&\leq(\|{G}(\xi^k)\|_\infty+\frac{2\widetilde{\rho}|\!\| A_{\overline{J}\cdot}|\!\|_1}{n}+\lambda\|v^{k-1}_{\overline{S}}\|_\infty)\|[G(\Delta x^k)]_{S^{k-1}}\|_1.
	\end{align}
	By utilizing $\sum_{i\in (S^{k-1})^c}\|\Delta x^k_{J_i}\|\leq\frac{2(c+1)}{c-2}\sum_{i\in S^{k-1}}\|\Delta x^k_{J_i}\|$ in Lemma \ref{compatity condition2}, it follows that $\|A\Delta x^k\|_\infty\leq\|A\|_\infty\|{G}(\Delta x^k)\|_1\leq
	\frac{3c}{c-2}\|A\|_\infty\|[{G}(\Delta x^k)]_{S^{k-1}}\|_1.$
	Multiplying $\|A\Delta x^k\|_\infty+2\|\varepsilon\|_\infty$ simultaneously on both sides of inequality \eqref{c4} above yields
	\begin{align*}
		&(\mu(\|A\Delta x^k\|_\infty+2\|\varpi\|_\infty)+\frac{\overline{\rho}\kappa}{\widetilde{\rho}})\|\Delta x^k\|^2\\
		&\leq(\|A\Delta x^k\|_\infty+2\|\varpi\|_\infty)[(\|{G}(\xi^k)\|_\infty+2n^{-1}\widetilde{\rho}|\!\| A_{\overline{J}\cdot}|\!\|_1\\
		&\quad+\lambda\|v^{k-1}_{\overline{S}}\|_\infty)\|[{G}(\Delta x^k)]_{S^{k-1}}\|_1]\\
		&\leq\frac{3c}{c-2}\|A\|_\infty[(\|{G}(\xi^k)\|_\infty+2n^{-1}\widetilde{\rho}|\!\| A_{\overline{J}\cdot}|\!\|_1\\
		&\quad+\lambda\|v^{k-1}_{\overline{S}}\|_\infty)\|[{G}(\Delta x^k)]_{S^{k-1}}\|^2_1]\\
		&\quad+2\|\varepsilon\|_\infty[(\|{G}(\xi^k)\|_\infty+2n^{-1}\widetilde{\rho}|\!\| A_{\overline{J}\cdot}|\!\|_1\\
		&\quad+\lambda\|v^{k-1}_{\overline{S}}\|_\infty)\|[{G}(\Delta x^k)]_{S^{k-1}}\|_1]\\
		&\leq\frac{3c}{c-2}\|A\|_\infty[(\|{G}(\xi^k)\|_\infty+2n^{-1}\widetilde{\rho}|\!\| A_{\overline{J}\cdot}|\!\|_1\\
		&\quad+\lambda\|v^{k-1}_{\overline{S}}\|_\infty)|S^{k-1}|\|\Delta x^k\|^2]\\
		&\quad+2\|\varepsilon\|_\infty[(\|{G}(\xi^k)\|_\infty+2n^{-1}\widetilde{\rho}|\!\| A_{\overline{J}\cdot}|\!\|_1\\
		&\quad+\lambda\|v^{k-1}_{\overline{S}}\|_\infty)\sqrt{|S^{k-1}|}\|\Delta x^k\|].
	\end{align*}
	Therefore, combining $\mu\|A\Delta x^k\|_\infty\|\Delta x^k\|^2\geq0$ follows
	\begin{align}\label{dqw2}
		&[(2\mu\|\varpi\|_\infty+\frac{\overline{\rho}\kappa}{\widetilde{\rho}})-\frac{3c\|A\|_\infty}{c-2}[(\|{G}(\xi^k)\|_\infty
		\nonumber\\
		&+\frac{2\widetilde{\rho}|\!\| A_{\overline{J}\cdot}|\!\|_1}{n}+\lambda\|v^{k-1}_{\overline{S}}\|_\infty)|S^{k-1}|]\|\Delta x^k\|^2\nonumber\\
		&\leq2\|\varpi\|_\infty[(\|{G}(\xi^k)\|_\infty+2n^{-1}\widetilde{\rho}|\!\| A_{\overline{J}\cdot}|\!\|_1+\nonumber\\
		&\quad\lambda\|v^{k-1}_{\overline{S}}\|_\infty)\sqrt{|S^{k-1}|}\|\Delta x^k\|].
	\end{align}
	Since $\lambda<\frac{2\mu \|\varpi\|_\infty+\frac{\kappa\overline{\rho}}{\widetilde{\rho}}-\frac{3c\|A\|_\infty}{c-2}[(\|{G}(\xi^k)\|_\infty+2n^{-1}\widetilde{\rho}|\!\| A_{\overline{J}\cdot}|\!\|_1)|S^{k-1}|]}{\frac{3c\|A\|_\infty}{c-2}\|v_{\overline{S}}^{k-1}\|_\infty|S^{k-1}|}$, so the proof is completed.
\end{proof}

It follows from inequality \eqref{dqw2} of Theorem \ref{error bound2} that the interval of $\lambda$ is nonempty. And if $n$ satisfies inequality $n>\frac{\frac{6c|S^{k-1}|}{c-2}\widetilde{\rho}|\!\| A_{\overline{J}\cdot}|\!\|_1}{2\mu\|\varpi\|_\infty+\frac{\kappa\overline{\rho}}{\widetilde{\rho}}
	-\frac{3c|S^{k-1}|}{c-2}\|A\|_\infty(\|{G}(\mu x^*)\|_\infty+\lambda)}$, then $c(\|{G}(\xi^k)\|_\infty+2n^{-1}\widetilde{\rho}|\!\| A_{\overline{J}\cdot}|\!\|_1)$ decreases and $\frac{2\mu \|\varpi\|_\infty+\frac{\kappa\overline{\rho}}{\widetilde{\rho}}-\frac{3c}{c-2}\|A\|_\infty[(\|{G}(\xi^k)\|_\infty+2n^{-1}\widetilde{\rho}|\!\| A_{\overline{J}\cdot}|\!\|_1)|S^{k-1}|]}{\frac{3c}{c-2}\|A\|_\infty\|v_{\overline{S}}^{k-1}\|_\infty|S^{k-1}|}$ increases as $n$ increases. In addition, as the number of samples $n$ increases, the distance between the iterative points generated by the proximal {\rm MM} algorithm and the true solution of the problem will become smaller. When the parameter $\lambda$ is fixed, the error bound of the iterates to the true solution decreases as the group sparsity increases. Based on the first two conclusions of this subsection, the following theorem will prove the statistical error bound of the cluster point of the iterative sequence generated by the Algorithm \ref{PMM} to the true solution of problem \eqref{prob1}.

\begin{theorem}\label{error bound3}
	Suppose $A$ satisfies the Assumption~\ref{ass0} on the set $\mathcal{C}(\overline{S},1.5\overline{r})$ and the noise vector $\varpi$ is nonzero. If $n>\frac{\frac{9\overline{r}c}{c-2}\widetilde{\rho}|\!\| A_{\overline{J}\cdot}|\!\|_1}{2\mu\|\varpi\|_\infty+\frac{\kappa\overline{\rho}}{\widetilde{\rho}}
		-\frac{4.5\overline{r}c}{c-2}\|A\|_\infty(\|{G}(\mu x^*)\|_\infty+\lambda)}$,
	and $\lambda$ satisfies 
	\begin{align*}
	&c(\|{G}(\mu x^*)\|_\infty+2n^{-1}\widetilde{\rho}|\!\| A_{\overline{J}\cdot}|\!\|_1)\le\lambda<\\
	&\frac{2\mu \|\varpi\|_\infty+\frac{\kappa\overline{\rho}}{\widetilde{\rho}}-\frac{4.5\overline{r}c}{c-2}\|A\|_\infty(\|{G}(\mu x^*)\|_\infty+2n^{-1}\widetilde{\rho}|\!\| A_{\overline{J}\cdot}|\!\|_1)}{\frac{4.5\overline{r}c}{c-2}\|A\|_\infty},
	\end{align*}
	where $c>2$ is a constant. Then, when the cluster point $\overline{x}$ of iterative sequence $\{x^k\}_{k\in\mathbb{N}}$ generated by Algorithm \ref{PMM} satisfies $\|\overline{x}_{J_i}\|\leq\frac{2a}{\rho(a+1)}$ for $i\in\overline{S}^c$, it holds that $\|\overline{x}-x^*\|\leq\frac{2\|\varpi\|_\infty[(\|{G}(\mu x^*)\|_\infty+2n^{-1}\widetilde{\rho}|\!\| A_{\overline{J}\cdot}|\!\|_1+\lambda)\sqrt{1.5\overline{r}}}{(2\mu\|\varpi\|_\infty+\frac{\overline{\rho}\kappa}{\widetilde{\rho}})-\frac{4.5\overline{r}c}{c-2}\|A\|_\infty(\|{G}(\mu x^*)\|_\infty+2n^{-1}\widetilde{\rho}|\!\|A_{\overline{J}\cdot}|\!\|_1+\lambda)}.$
\end{theorem}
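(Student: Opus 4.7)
The plan is to mirror the argument used for Theorem~\ref{error bound1} in the preceding subsection, with Theorem~\ref{error bound2} now playing the role of the per-iterate bound. The conclusion will follow once I can verify that for all sufficiently large $k$ the hypotheses of Theorem~\ref{error bound2} are met with $S^{k-1}\equiv\overline{S}$ and $|S^{k-1}|\le 1.5\overline{r}$, after which I pass to the limit $k\to\infty$.

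First I would exploit the convergence $x^k\to\overline{x}$, together with the boundedness of $\{Q_{k-1}\}$ and the controlled error $\|\delta^{k-1}\|\le\tfrac{\|Q^{1/2}(x^{k-1}-x^{k-2})\|}{\sqrt{2}\|Q^{-1/2}\|}\to 0$, to conclude that $\xi^k=Q_{k-1}(x^{k-1}-x^k)+\delta^{k-1}-\mu x^*\to -\mu x^*$, so $G(\xi^k)\to G(\mu x^*)$ and in particular $\tfrac{1}{2}\|G(\mu x^*)\|_\infty\le\|G(\xi^k)\|_\infty\le\tfrac{3}{2}\|G(\mu x^*)\|_\infty$ for $k\ge\widehat{k}$. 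Next, using the explicit formula \eqref{wk-equa1} for $w_i^{k-1}$ (so that $w_i^{k-1}=0$ when $\|x^{k-1}_{J_i}\|\le\tfrac{2}{\rho(a+1)}$) together with the standing assumption $\|\overline{x}_{J_i}\|\le\tfrac{2a}{\rho(a+1)}$ for $i\in\overline{S}^c$ and the closeness $\|x^{k-1}_{J_i}\|\le\|\overline{x}_{J_i}\|+\tfrac{1}{\rho(a+1)}$ which holds once $k\ge\widetilde{k}$, I can deduce that $\max_{i\in(S^{k-1})^c}w_i^{k-1}\le\tfrac{1}{2}$ and in fact $S^{k-1}\equiv\overline{S}$ for every $k\ge\overline{k}:=\max\{\widehat{k},\widetilde{k}\}+1$. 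Since each $w_i^{k-1}\in[0,1]$, one also has $\|v_{\overline{S}}^{k-1}\|_\infty\le 1$ automatically.

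Second, with these facts in hand I would invoke Theorem~\ref{error bound2} with $|S^{k-1}|=\overline{r}\le 1.5\overline{r}$. The hypothesis on $\lambda$ in Theorem~\ref{error bound3} is designed precisely so that after substituting the upper bound $\|G(\xi^k)\|_\infty\le\tfrac{3}{2}\|G(\mu x^*)\|_\infty$, the replacement $\|v_{\overline{S}}^{k-1}\|_\infty\le 1$, and $|S^{k-1}|\le 1.5\overline{r}$ into the interval prescribed by Theorem~\ref{error bound2}, the present $\lambda$ still lies in that interval. Similarly, the sample-size threshold on $n$ is exactly what renders that interval nonempty after these conservative replacements. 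Applying Theorem~\ref{error bound2} therefore yields a uniform bound
\[
\|x^k-x^*\|\le \frac{2\|\varpi\|_\infty\bigl(\|G(\xi^k)\|_\infty+2n^{-1}\widetilde{\rho}|\!\|A_{\overline{J}\cdot}|\!\|_1+\lambda\bigr)\sqrt{1.5\overline{r}}}{(2\mu\|\varpi\|_\infty+\overline{\rho}\kappa/\widetilde{\rho})-\tfrac{4.5\overline{r}c}{c-2}\|A\|_\infty\bigl(\|G(\xi^k)\|_\infty+2n^{-1}\widetilde{\rho}|\!\|A_{\overline{J}\cdot}|\!\|_1+\lambda\bigr)}
\]
for every $k\ge\overline{k}$. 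Letting $k\to\infty$ and using $\|G(\xi^k)\|_\infty\to\|G(\mu x^*)\|_\infty$ together with $x^k\to\overline{x}$ produces the stated bound.

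The main obstacle will be bookkeeping of the constants in the transition from the finite-$k$ estimate, which involves $\|G(\xi^k)\|_\infty$, to the limiting estimate, which involves $\|G(\mu x^*)\|_\infty$: I must ensure that the conservative replacement $|S^{k-1}|\le 1.5\overline{r}$ (in place of $\overline{r}$) and the factor $\tfrac{3}{2}$ from the transient bound on $\|G(\xi^k)\|_\infty$ do not force a tighter interval on $\lambda$ than the one stated in Theorem~\ref{error bound3}, and that the lower threshold on $n$ is consistent with both the hypotheses of Theorem~\ref{error bound2} and with the limit. Beyond this, the only subtle point is the identification $S^{k-1}\equiv\overline{S}$, which relies on the precise threshold structure of \eqref{wk-equa1}; once that is verified the rest is a direct transcription of the argument used in Theorem~\ref{error bound1}.
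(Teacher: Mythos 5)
Your proposal follows essentially the same route as the paper's own proof: establish $\xi^k\to-\mu x^*$ hence $G(\xi^k)\to G(\mu x^*)$, use the closeness of $x^k$ to $\overline{x}$ together with the smallness of $\|\overline{x}_{J_i}\|$ for $i\in\overline{S}^c$ and the threshold structure of $w_i^{k-1}$ to get $S^{k-1}\equiv\overline{S}$ and $\|v^{k-1}_{\overline{S}}\|_\infty\le 1$ for large $k$, then invoke Theorem~\ref{error bound2} and pass to the limit. The constant-bookkeeping concern you flag is legitimate but is handled in the paper exactly as you describe, so no change of approach is needed.
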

\begin{proof}
	For sufficiently large $k$, we first prove $S^{k-1}\equiv\overline{S}$. Since $x_k\rightarrow\overline{x}$ for $k\rightarrow\infty$, $\gamma_{i,k}\in[\underline{\gamma}_i,\gamma_{i,0}]$ for $i=1,2$ and $\|\delta^{k-1}\|\leq\frac{\|Q^{1/2}(x^k-x^{k-1})\|}{\sqrt{2}\|Q^{-1/2}\|}$. Therefore, according to the definition of $\xi^k$, $\xi^k\rightarrow-\mu x^*$ holds as $k\rightarrow\infty$. So for $i=1,2,\ldots,m$, $\|\xi^k_{J_i}\|\rightarrow\|\mu x^*_{J_i}\|$ holds as $k\rightarrow\infty$. Then ${G}(\xi^k)\rightarrow{G}(\mu x^*)$ holds $k\rightarrow\infty$. Therefore, there exists $\widehat{k}>0$ such that~$\frac{1}{2}\|G(\mu x^*)\|_\infty\leq\|{G}(\xi^k)\|_\infty\leq\frac{3}{2}\|{G}(\mu x^*)\|_\infty$ holds for all~$k\geq\widehat{k}$. As $k\rightarrow\infty$, according to $x_k\rightarrow\overline{x}$, then for arbitrarily small $\epsilon>0$ (we can take $\epsilon=\frac{1}{\rho(a+1)}$), there exists a positive integer $\widetilde{k}$ for all $k\geq\widetilde{k}$ such that $\|x^k_{J_i}\|-\|\overline{x}_{J_i}\|\leq\|x^k_{J_i}-\overline{x}_{J_i}\|\leq\frac{1}{\rho(a+1)}$ holds for each $i=1,2,\ldots,m$. By the assumption on $\overline{x}$, it is obtained that $\|\overline{x}_{J_i}\|\leq\frac{a}{\rho(a+1)}$ for $i\notin\overline{S}$, so it is further acquired that $\|x^k_{J_i}\|\leq\|\overline{x}_{J_i}\|+\frac{1}{\rho(a+1)}\leq\frac{a}{\rho(a+1)}+\frac{1}{\rho(a+1)}=\frac{1}{\rho}$ for $i\notin\overline{S}$. Pick $\overline{k}=\max\{\widetilde{k},\widehat{k}\}$, then it yields $S^{k-1}\equiv\overline{S}$ for all $k\geq\overline{k}+1$. When $k\geq\overline{k}+1$, it follows from $\max_{i\in\overline{S}^c}w^{k-1}_i\leq\frac{1}{2}$ that $\|v^{k-1}_{\overline{S}}\|_\infty\leq1$. So, by selecting $\lambda$ that satisfies Theorem \ref{error bound2} and combining it with Theorem \ref{error bound2}, one can obtain
$\|\overline{x}-x^*\|\leq\frac{2\|\varpi\|_\infty[(\|{G}(\mu x^*)\|_\infty+2n^{-1}\widetilde{\rho}|\!\| A_{\overline{J}\cdot}|\!\|_1+\lambda)\sqrt{1.5\overline{r}}}{(2\mu\|\varpi\|_\infty+\frac{\overline{\rho}\kappa}{\widetilde{\rho}})-\frac{4.5\overline{r}c}{c-2}\|A\|_\infty(\|{G}(\mu x^*)\|_\infty+2n^{-1}\widetilde{\rho}|\!\|A_{\overline{J}\cdot}|\!\|_1+\lambda)}.$
The proof is completed.
\end{proof}

From the above Theorem \ref{error bound3}, it can be observed that when the design matrix $A$ satisfies the Assumption \ref{ass0} on the set $\mathcal{C}(\overline {S}, 1.5\overline{r})$, and when the number of samples $n$ reaches a certain level, the value range of $\lambda$ is nonempty. Moreover, as the number of samples $n$ increases, the interval of $\lambda$ expands, and the distance from the cluster point of the iterative sequence generated by Algorithm \ref{PMM} to the true solution of the problem diminishes. The discussion regarding the constant $c$ can also refer to the previous section. The following proposition provides the statistical error bound from the $x^0$ of Algorithm \ref{PMM} to the true solution $x^*$ of the problem.

\begin{proposition}\label{initial-point}
	Let $\Theta(x):=\vartheta(Ax-b)+\widetilde{\lambda}\langle e,G(x)\rangle
	+\frac{\widetilde{\gamma}_{1,0}}{2}\|x\|^2+\frac{\widetilde{\gamma}_{2,0}}{2}\|Ax\|^2$. If $x^0$ is the approximate optimal solution to the problem \eqref{x0sub}, i.e., there exist $\xi^0\in\mathbb{R}^n$ and $\varepsilon\ge0$ satisfying $\|\xi^0\|\le\varepsilon$ such that $\xi^0\in\partial\Theta(x^0)$. In addition, if $\widetilde{\lambda}\geq2(2n^{-1}\widetilde{\rho}|\!\|A|\!\|_1+\widetilde{\gamma}_{1,0}\|G(x^*)\|_\infty+\widetilde{\gamma}_{2,0}\|G(A^{\top}Ax^*)\|_\infty+\varepsilon)$, then it holds that $\|x^0-x^*\|\le C\sqrt{\overline{r}}$.		
\end{proposition}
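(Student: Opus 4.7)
The plan is to combine the approximate subgradient inclusion $\xi^0\in\partial\Theta(x^0)$ with the $\widetilde{\gamma}_{1,0}$-strong convexity of $\Theta$ coming from the regularizer $\frac{\widetilde{\gamma}_{1,0}}{2}\|x\|^2$, and then split every resulting cross term with respect to the true group support $\overline{S}$, so that for $\widetilde{\lambda}$ meeting the stated lower bound the off-support contributions appear with non-positive sign and may be dropped, while the on-support contributions are controlled by Cauchy-Schwarz via a factor $\sqrt{\overline{r}}$.

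First, I would decompose $\xi^0=A^{\top}\omega^0+\widetilde{\lambda}\rho^0+\widetilde{\gamma}_{1,0}x^0+\widetilde{\gamma}_{2,0}A^{\top}Ax^0$ with $\omega^0\in\partial\vartheta(Ax^0-b)$ and $\rho^0\in\partial\langle e,G(\cdot)\rangle(x^0)$, apply the convexity inequality for the non-quadratic part at $x^0$ evaluated at $x^*$, and substitute $x^0=x^*+\Delta x^0$ in the resulting quadratic cross terms (with $\Delta x^0:=x^0-x^*$). This produces
\begin{align*}
&\vartheta(Ax^0-b)-\vartheta(Ax^*-b)+\widetilde{\lambda}\langle e,G(x^0)-G(x^*)\rangle\\
&\quad+\widetilde{\gamma}_{1,0}\|\Delta x^0\|^2+\widetilde{\gamma}_{2,0}\|A\Delta x^0\|^2\\
&\le\langle\xi^0,\Delta x^0\rangle-\widetilde{\gamma}_{1,0}\langle x^*,\Delta x^0\rangle-\widetilde{\gamma}_{2,0}\langle A^{\top}Ax^*,\Delta x^0\rangle.
\end{align*}

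Next, I would lower-bound the left-hand side term by term: the subgradient inequality of $\vartheta$ at $Ax^*-b$ together with the uniform bound $\widetilde{\rho}$ on $\partial\theta$ (Assumption \ref{theta}) gives $\vartheta(Ax^0-b)-\vartheta(Ax^*-b)\ge-n^{-1}\widetilde{\rho}|\!\|A|\!\|_1\|G(\Delta x^0)\|_1$ via group-H\"older, and $x^*_{J_i}=0$ on $\overline{S}^c$ gives $\widetilde{\lambda}\langle e,G(x^0)-G(x^*)\rangle\ge-\widetilde{\lambda}\sum_{i\in\overline{S}}\|\Delta x^0_{J_i}\|+\widetilde{\lambda}\sum_{i\notin\overline{S}}\|\Delta x^0_{J_i}\|$. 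On the right-hand side I bound $|\langle\xi^0,\Delta x^0\rangle|\le\varepsilon\|\Delta x^0\|\le\varepsilon\|G(\Delta x^0)\|_1$, and the two quadratic cross terms by $\widetilde{\gamma}_{1,0}\|G(x^*)\|_\infty\sum_{i\in\overline{S}}\|\Delta x^0_{J_i}\|$ (again using $x^*_{J_i}=0$ off $\overline{S}$) and $\widetilde{\gamma}_{2,0}\|G(A^{\top}Ax^*)\|_\infty\|G(\Delta x^0)\|_1$. Splitting $\|G(\Delta x^0)\|_1=\sum_{i\in\overline{S}}\|\Delta x^0_{J_i}\|+\sum_{i\notin\overline{S}}\|\Delta x^0_{J_i}\|$ and collecting yields
\begin{align*}
&\widetilde{\gamma}_{1,0}\|\Delta x^0\|^2+\widetilde{\gamma}_{2,0}\|A\Delta x^0\|^2\\
&\le(\widetilde{\lambda}+M)\sum_{i\in\overline{S}}\|\Delta x^0_{J_i}\|-(\widetilde{\lambda}-M_1)\sum_{i\notin\overline{S}}\|\Delta x^0_{J_i}\|,
\end{align*}
with $M_1:=n^{-1}\widetilde{\rho}|\!\|A|\!\|_1+\widetilde{\gamma}_{2,0}\|G(A^{\top}Ax^*)\|_\infty+\varepsilon$ and $M:=M_1+\widetilde{\gamma}_{1,0}\|G(x^*)\|_\infty$.

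Finally, the hypothesis implies $\widetilde{\lambda}\ge 2M\ge 2M_1$, so $\widetilde{\lambda}-M_1\ge 0$ and the off-support term is non-positive and may be dropped; dropping also the non-negative $\widetilde{\gamma}_{2,0}\|A\Delta x^0\|^2$ and applying Cauchy-Schwarz $\sum_{i\in\overline{S}}\|\Delta x^0_{J_i}\|\le\sqrt{\overline{r}}\,\|\Delta x^0\|$ gives $\widetilde{\gamma}_{1,0}\|\Delta x^0\|\le(\widetilde{\lambda}+M)\sqrt{\overline{r}}\le\frac{3\widetilde{\lambda}}{2}\sqrt{\overline{r}}$, hence $\|\Delta x^0\|\le C\sqrt{\overline{r}}$ with $C=\frac{3\widetilde{\lambda}}{2\widetilde{\gamma}_{1,0}}$. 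The main obstacle is keeping track of which terms contribute to $M_1$ versus to $M$: the factor of $2$ on $n^{-1}\widetilde{\rho}|\!\|A|\!\|_1$ inside the hypothesis is exactly what provides the slack needed so that both $\widetilde{\lambda}\ge 2M_1$ (to drop the off-support term) and $\widetilde{\lambda}+M\le\frac{3\widetilde{\lambda}}{2}$ (to keep $C$ proportional to $\widetilde{\lambda}/\widetilde{\gamma}_{1,0}$) hold simultaneously.
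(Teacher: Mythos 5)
Your proof is correct and follows essentially the same route as the paper's: compare $x^0$ with the feasible point $x^*$ via the (strong) convexity of $\Theta$ and the inclusion $\xi^0\in\partial\Theta(x^0)$, lower-bound the loss difference by $-O(n^{-1}\widetilde{\rho}|\!\|A|\!\|_1)\|G(\Delta x^0)\|_1$ using the uniform subgradient bound on $\theta$, split all cross terms over $\overline{S}$ versus $\overline{S}^c$ so that the lower bound on $\widetilde{\lambda}$ makes the off-support coefficient non-positive, and finish with Cauchy--Schwarz to produce the $\sqrt{\overline{r}}$ factor. The only differences are cosmetic (you use the subgradient decomposition instead of the function-value inequality, a factor $1$ instead of $2$ on the $n^{-1}\widetilde{\rho}|\!\|A|\!\|_1$ bound, and $\sqrt{\overline{r}}$ instead of the paper's $\sqrt{1.5\overline{r}}$), and all remain consistent with the stated hypothesis.
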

\begin{proof}
	Denote $\Delta x^0:=x^0-x^*$. Based on known assumptions, strong convexity of function $\Theta$ and $x^*\in \mathcal{X}$, it can be concluded that
	\begin{align*}
		&\vartheta(Ax^*-b)+\widetilde{\lambda}\langle e,G(x^*)\rangle
		+\frac{\widetilde{\gamma}_{1,0}}{2}\|x^*\|^2+\frac{\widetilde{\gamma}_{2,0}}{2}\|Ax^*\|^2\\
		&\ge\vartheta(Ax^0-b)+\widetilde{\lambda}\langle e,G(x^0)\rangle
		+\frac{\widetilde{\gamma}_{1,0}}{2}\|x^0\|^2+\frac{\widetilde{\gamma}_{2,0}}{2}\|Ax^0\|^2\\
		&+\langle\xi^0,x^*-x^0\rangle+\frac{1}{2}\langle x^*-x^0,(\widetilde{\gamma}_{1,0} I+\widetilde{\gamma}_{2,0}A^{\top}A)(x^*-x^0)\rangle.
	\end{align*}
	From $\vartheta(z)=\frac{1}{n}\sum^n_{i=1}\theta(z_i)$ and Assumption \ref{theta}, it follows
	\begin{align*}
		\vartheta(Ax^0-b)-\vartheta(Ax^*-b)
		&\geq-2n^{-1}\widetilde{\rho}\|G(A\Delta x^0)\|_1.
	\end{align*}
	According to $0.5\widetilde{\gamma}_{1,0}[\|x^0\|^2-\|x^*\|^2]=0.5\widetilde{\gamma}_{1,0}\|x^0-x^*\|^2+\widetilde{\gamma}_{1,0}\langle x^0-x^*,x^*\rangle$ and
	$0.5\widetilde{\gamma}_{2,0}[\|Ax^0\|^2
	-\|Ax^*\|^2]=0.5\widetilde{\gamma}_{2,0}\|A(x^0-x^*)\|^2+\widetilde{\gamma}_{2,0}\langle x^0-x^*,A^{\top}Ax^*\rangle$, it can be obtained that
	\begin{align*}
		&\widetilde{\gamma}_{1,0}\|\Delta x^0\|^2\leq \langle x^0-x^*,(\widetilde{\gamma}_{1,0} I+\widetilde{\gamma}_{2,0}A^{\top}A)(x^0-x^*)\rangle\\
		&\leq2p^{-1}\widetilde{\rho}\|G(A\Delta x^0)\|_1+\widetilde{\lambda}\sum_{i=1}^m(\|x^*_{J_i}\|-\|x^0_{J_i}\|)\\
		&\quad+\langle x^0-x^*,\xi^0-(\widetilde{\gamma}_{1,0} I+\widetilde{\gamma}_{2,0}A^{\top}A)x^*\rangle\\
		&\leq(2n^{-1}\widetilde{\rho}|\!\|A|\!\|_1+\widetilde{\lambda}+\widetilde{\gamma}_{1,0}\|G(x^*)\|_\infty+\widetilde{\gamma}_{2,0}\|G(A^{\top}Ax^*)\|_\infty\\
		&\quad+\varepsilon)\|[G(\Delta x^0)]_{\overline{S}}\|_1\\
		&\quad+(2n^{-1}\widetilde{\rho}|\!\|A|\!\|_1-\widetilde{\lambda}+\widetilde{\gamma}_{1,0}\|G(x^*)\|_\infty+\widetilde{\gamma}_{2,0}\|G(A^{\top}Ax^*)\|_\infty\\
		&\quad+\varepsilon)\|[G(\Delta x^0)]_{(\overline{S})^c}\|_1\\	&\leq(2n^{-1}\widetilde{\rho}|\!\|A|\!\|_1+\widetilde{\lambda}+\widetilde{\gamma}_{1,0}\|G(x^*)\|_\infty+\widetilde{\gamma}_{2,0}\|G(A^{\top}Ax^*)\|_\infty\\
		&\quad+\varepsilon)\|[G(\Delta x^0)]_{\overline{S}}\|_1\\ &\leq(2n^{-1}\widetilde{\rho}|\!\|A|\!\|_1+\widetilde{\lambda}+\widetilde{\gamma}_{1,0}\|G(x^*)\|_\infty+\widetilde{\gamma}_{2,0}\|G(A^{\top}Ax^*)\|_\infty\\
		&\quad+\varepsilon)\sqrt{1.5\overline{r}}\|\Delta x^0\|,
	\end{align*}
	where the second to last inequality is based on $\widetilde{\lambda}\geq2(2n^{-1}\widetilde{\rho}|\!\|A|\!\|_1+\widetilde{\gamma}_{1,0}\|G(x^*)\|_\infty+\widetilde{\gamma}_{2,0}\|G(A^{\top}Ax^*)\|_\infty+\varepsilon)$ and the nonnegativity of $\widetilde{\gamma}_{1,0}\|\Delta x^0\|^2$, this means that $\Delta x^0$ satisfies the inequality $\|[G(\Delta x^0)]_{(\overline{S})^c}\|_1\leq3\|[G(\Delta x^0)]_{\overline{S}}\|_1$. Then it yields that $\|\Delta x^0\|\leq\frac{(2n^{-1}\widetilde{\rho}|\!\|A|\!\|_1+\widetilde{\lambda}+\widetilde{\gamma}_{1,0}\|G(x^*)\|_\infty+\widetilde{\gamma}_{2,0}\|G(A^{\top}Ax^*)\|_\infty+\varepsilon)\sqrt{1.5\overline{r}}}{\widetilde{\gamma}_{1,0}}.$ 
\end{proof}

In accordance with Algorithm~\ref{PMM}, the generation of the iterates $x^k$ is contingent upon the selection of the initial point $x^0$. Consequently, in Proposition \ref{initial-point}, we stipulate the selected initial point should be within the corresponding cone $\mathcal{C}(\overline{S},1.5\overline {r})$. From the proof of the proposition, it can be seen obviously that the constant $C$ in the above proposition hinges on $\widetilde{\gamma}_{1,0}$ and $\widetilde{\gamma}_{2,0}$.

\section{Numerical experiment}\label{section5}

In this section, numerical testing is conducted on synthetic and real data to verify the effectiveness of the proximal~MM method for solving \eqref{DC-Sprob1}. The authors use MATLAB R2018b software to execute the algorithm, and all numerical tests are conducted on the same computer. The operating system of the computer is~64 bit~Windows 11, the processor is Intel(R)~Core (TM)~i5-1035G1, the frequency is~1.19GHz, and the memory is~8 GB.

\bigskip

The authors test the group zero-norm regularized composite optimization problem in problem \eqref{DC-Sprob1} by taking $\vartheta(Ax-b)=\|Ax-b\|_q/\sqrt{n}$, where $q=1,2$, $g(x)=x$, and $\mathcal{X}$ is the entire space, namely the following optimization problem
\begin{align}\label{sr-gopt}
	\mathop{\min}_{x\in\mathbb{R}^p}\Big\{\|Ax-b\|_q/\sqrt{n}+0.5\mu\|x\|^2+\lambda\langle e-w^k,{G}(x)\rangle\Big\}.
\end{align}
The authors use the inexact proximal~MM method (algorithm \ref{PMM}) to solve \eqref{sr-gopt} in section \ref{sec3.1}, where the termination criterion of algorithm \ref{PMM} is as follows in Remark \ref{PMM-stopcriterion}.
\begin{remark}\label{PMM-stopcriterion}
	For convenience, denote $f_1(z):=\|z\|_q/\sqrt{n}$, $h_k(x):=\textstyle{\sum_{i=1}^m}v_i^k\|x_{J_i}\|$, where $v_i^k=\lambda(1-w_i^k)$ for $i=1,2,\ldots,m$. Then the first-order optimality condition of \eqref{sr-gopt} has the following form:
	\[
	\xi\in\partial f_1(z),~\delta_k-A^{\top}\xi-\mu x\in\partial h_k(x),~Ax-z-b=0,
	\]
	where $\xi\in\mathbb{R}^p$ is the multiplier vector corresponding to~$Ax-z-b=0$. Therefore, the KKT residual ${\rm Err_{PMM}}^{k+1}$ of problem \eqref{sr-gopt} at $(x^{k+1},z^{k+1},\xi^{k+1})$ can be defined as
	$(\sqrt{r_1+r_2+r_3})/(1+\|b\|)$, where $r_1,~r_2$ and $r_3$ are respectively represented as
	$r_1=\|z^{k+1}-\mathcal{P}_{f_1}(z^{k+1}+\xi^{k+1})\|^2$, $r_2=\|x^{k+1}-\mathcal{P}_{h_k}(\delta_k+(1-\mu)x^{k+1}-A\xi^{k+1})\|^2$ and $r_3=\|Ax^{k+1}-z^{k+1}-b\|^2$. This paper uses ${\rm Err_{PMM}}^{k+1}\leq \epsilon_{\rm PMM}^{k+1}$ as the termination condition for Algorithm~\ref{PMM}.
\end{remark}

\subsection{Proximal dual semismooth Newton method for subproblem}
Subproblem \eqref{subprobk} can be equivalently written as
\begin{align}\label{eq-PMM-penalty-gz}
	&\min\limits_{x\in \mathbb{R}^p,z\in \mathbb{R}^n}\{f_1(z)+0.5\mu\|x\|^2+h_k(x)+\frac{\gamma_{1,k}}{2}\|x-x^k\|^2\nonumber\\
	&\quad\quad\quad\quad+\frac{\gamma_{2,k}}{2}\|z-z^k\|^2:~~Ax-z-b=0\}.
\end{align}
The multiplier variable $\xi$ is introduced for \eqref{eq-PMM-penalty-gz}, then the Lagrangian function of \eqref{eq-PMM-penalty-gz} is defined as
\begin{align*}
&\mathcal{L}(x,z;\xi):=f_1(z)+\frac{\mu}{2}\|x\|^2+h_k(x)+\frac{\gamma_{1,k}}{2}\|x-x^k\|^2\\
&\quad\quad\quad\quad\quad+\frac{\gamma_{2,k}}{2}\|z-z^k\|^2+\langle\xi,Ax-z-b\rangle.
\end{align*}
After calculation, it can be concluded that the dual problem of problem \eqref{eq-PMM-penalty-gz} is
\begin{align}\label{dual-eq-PMM-penalty-gz}
	&\mathop{\min}_{\xi\in\mathbb{R}^n}\Big\{\Psi_k(\xi):=\langle b,\xi\rangle+\frac{\gamma_{2,k}}{2}\|z^k+\gamma^{-1}_{2,k}\xi\|^2\nonumber\\
	&\quad\quad-e_{\gamma^{-1}_{2,k}}f_1(z^k+\gamma^{-1}_{2,k}\xi)-e_{r_k^{-1}}h_k(r_k^{-1}(\gamma_{1,k}x^k-A^{\top}\xi))\nonumber\\
	&\quad\quad+\frac{\|\gamma_{1,k}x^k-A^{\top}\xi\|^2}{2r_k}\Big\},
\end{align}
where $r_k:=\mu+\gamma_{1,k}$. Obviously, the strong duality of problem \eqref{eq-PMM-penalty-gz} and \eqref{dual-eq-PMM-penalty-gz} holds. Due to the smoothness and convexity of function $\Psi_k(\xi)$, solving the dual problem \eqref{dual-eq-PMM-penalty-gz} is equivalent to finding the roots of the following nonsmooth system:
\begin{align}\label{nonsmooth system}
	&\nabla\Psi_k(\xi)=b+z_k+\gamma^{-1}_{2,k}\xi-\Pi_\Gamma(z^k+\gamma^{-1}_{2,k}\xi)+\nonumber\\
	&A\Pi_\Lambda(r_k^{-1}(\gamma_{1,k}x^k-A^{\top}\xi))+r_k^{-1}A(A^{\top}\xi-\gamma_{1,k}x^k)=0,
\end{align}
where $\Gamma:=\{y:\|y\|\leq\gamma^{-1}_{2,k}\}$,~$\Lambda:=\Lambda_1\times\Lambda_2\times\cdots\times\Lambda_m$ and $\Lambda_i:=\{y_{J_i}:\|y_{J_i}\|\leq r_k^{-1}v_i^k\}$ holds for $i=1,2,\ldots,m$. For convenience, denote $u_k:=z^k+\gamma^{-1}_{2,k}\xi$ and $y^k:=r_k^{-1}(\gamma_{1,k}x^k-A^{\top}\xi)$. The generalized Clarke Jacobian $\partial\nabla\Psi_k$ of $\nabla\Psi_k$ satisfies
\begin{align}\label{Clarke Jacobian}
	&\partial(\nabla\Psi_k)(\xi)\subseteq\widehat{\partial}^2(\Psi_k)(\xi):=\gamma^{-1}_{2,k}(I-\partial\Pi_\Gamma(z^k+\gamma^{-1}_{2,k}\xi))\nonumber\\
	&\qquad\quad\left. \right.+r_k^{-1}A[I-\partial\Pi_\Lambda(r_k^{-1}(\gamma_{1,k}x^k-A^{\top}\xi))]A^{\top}
\end{align}
with
\begin{align}
	&\partial\Pi_\Gamma(u_k)\nonumber\\
	&=\begin{cases}\label{z-prox-operator}
		I &{\rm if}~\|u_k\|<\gamma^{-1}_{2,k},\\
		{\rm conv}(I,I-\gamma^{2}_{2,k}{u_k}^{\top}u_k) &{\rm if}~\|u_k\|=\gamma^{-1}_{2,k},\\
		\left\{\gamma^{-1}_{2,k}\left(\frac{1}{\|u_k\|}I-\frac{1}{\|u_k\|^3}{u_k}^{\top}u_k\right)\right\} &{\rm if}~\|u_k\|>\gamma^{-1}_{2,k}.
	\end{cases}
\end{align}
and
\begin{align}
	&\partial\Pi_{\Lambda_i}(y^k_{J_i})\nonumber\\
	&=\begin{cases}\label{z-prox-operator}
		I &{\rm if}~\|y^k_{J_i}\|<r_k^{-1}v_i^k,\\
		{\rm conv}(I,I-(r_k^{-1}v_i^k)^{-2}{y^k_{J_i}}^{\top}y^k_{J_i}) &{\rm if} ~\|y^k_{J_i}\|=r_k^{-1}v_i^k,\\
		\left\{r_k^{-1}v_i^k\left(\frac{1}{\|y^k_{J_i}\|}I-\frac{1}{\|y^k_{J_i}\|^3}{y^k_{J_i}}^{\top}y^k_{J_i}\right)\right\} &{\rm if}~\|y^k_{J_i}\|>r_k^{-1}v_i^k.
	\end{cases}
\end{align}
$\mathcal{U}_j(u)$ and~$\mathcal{V}_j(u)$ are respectively defined as
\begin{align}
	\mathcal{U}_j(u):=\{{\rm Diag}(v_1,v_2,\ldots,v_n)|v_i\in\partial\Pi_{\Gamma}(\gamma_{2,j}z^j+u)\}
\end{align}
and
\begin{align}
	&\mathcal{V}_j(u):=\{{\rm Diag}(W_{J_1},W_{J_2},\ldots,W_{J_m})|W_{J_i}\in\nonumber\\
	&\quad\quad\quad\quad\quad\partial\Pi_{\Lambda_i}[(\gamma_{1,j}x^j-A^{\top}u)_{J_i}]\}.
\end{align}
It is worth emphasizing that directly applying the semismooth Newton method to solve the system \eqref{nonsmooth system} will fail, because the generalized Hessian $\partial(\nabla\Psi_k)$ defined in the solving system \eqref{nonsmooth system} may be singular. At this point, this paper will use the proximal dual semismooth Newton method to solve the subproblem \eqref{dual-eq-PMM-penalty-gz} in inexact proximal point Algorithm \ref{PMM}. The specific iteration steps of this algorithm are as follows.

\setlength{\fboxrule}{0.8pt}
\noindent
	{
		\begin{algorithm}\label{PPA}({\bf Inexact proximal point algorithm for problem \eqref{dual-eq-PMM-penalty-gz}})
			\begin{description}
				\item[S.0] Fix $k\in\mathbb{N}$. Pick $\underline{\varrho}\geq0$, $\varrho_0>0$ and $\xi^j\in\mathbb{R}^p$. Set $j=0$.
				
				\item[{\rm{\bf while}}] termination conditions are not met {\rm{\bf do}}
				
				\item[S.1] $\xi^{j+1}$ is obtained by using the semismooth Newton method to solve the following problem
				\begin{equation}\label{PPA-DUAL}	\xi^{j+1}\approx\arg\min_{\xi\in\mathbb{R}^p}\Upsilon_{k,j}(\xi):=\Psi_k(\xi)+\frac{\varrho_j}{2}\|\xi-\xi^j\|^2.
				\end{equation}
				
				\item[S.2] Set $\varrho_{j+1}\downarrow\underline{\varrho}$ and $j\leftarrow j+1$, and return to step S.1.
				
				\item[{\rm{\bf end~while}}]
				
			\end{description}
		\end{algorithm}
	}


\begin{remark}\label{remark-PPA}
	According to \cite[Section~3]{Roc76}, for the problem \eqref{PPA-DUAL}, the algorithm \ref{PPA} adopts the following inexact discriminant criteria:
	\[
	\|\nabla\Upsilon_{k,j}(\xi^{j+1})\|\leq\alpha_j\varrho_j\|\xi^{j+1}-\xi^j\|~~\rm{and} ~~\sum_{j=0}^{\infty}\alpha_j<\infty.
	\]
	In this criteria, the results of the global convergence and linear convergence analysis of algorithm \ref{PPA} can be referenced in \cite{Roc76}.
\end{remark}
Note that solving the subproblem \eqref{PPA-DUAL} is equal to solving the roots of the following systems
\begin{equation}\label{PPA-DUAL-root}
	\nabla\Upsilon_{k,j}(\xi)=0.
\end{equation}
Since the mapping $\nabla\Upsilon_{k,j}:\mathbb{R}^p\rightarrow\mathbb{R}^p$ is~Lipschitz continuous, we define the generalized Hessian matrix of $\Upsilon_{k,j}$ at $\xi$ as $\partial^2\Upsilon_{k,j}(\xi):=\partial_C\nabla\Upsilon_{k,j}(\xi)$. By \cite[Theorem 2.2]{Hiriart84}, for all $d\in\mathbb{R}^p$, it holds that $\partial^2\Upsilon_{k,j}(\xi)d=\widehat{\partial}^2\Upsilon_{k,j}(\xi)d$, where
\begin{align*}
&\widehat{\partial}^2\Upsilon_{k,j}(\xi)=\varrho_jI+\gamma^{-1}_{2,k}\partial[\mathcal{P}_{\gamma^{-1}_{2,k}f}](z^k+\gamma^{-1}_{2,k}\xi)\\
&\qquad\qquad\quad+\gamma_{1,k}^{-1}A\partial[\mathcal{P}_{\gamma_{1,k}^{-1}h_k}](x_k-\gamma_{1,k}^{-1}A^{\top}\xi)A^{\top}.
\end{align*}
Since the matrices $U\in\partial[\mathcal{P}_{\gamma^{-1}_{2,k}f}](z^k+\gamma^{-1}_{2,k}\xi)$ and $V\in\partial[\mathcal{P}_{\gamma_{1,k}^{-1}h_k}](x_k-\gamma_{1,k}^{-1}A^{\top}\xi)$ are symmetric positive semidefinite matrices. For each $j\in\mathbb{N}$, combining $\varrho_j>0$ infers that $\varrho_jI+\gamma^{-1}_{2,k}U+\gamma_{1,k}^{-1}AVA^{\top}$ is positive definite, so all elements in $\widehat{\partial}^2\Upsilon_{k,j}(\xi)$ are nonsingular. Thus, the following semismooth Newton method is applied to problem \eqref{PPA-DUAL} to seek an inexact root $\xi^{j+1}$. The global and local convergence analysis of this algorithm can be found in reference \cite[Theorem 3.3-3.4]{ZhaoSunToh2010}. The detailed iterative steps of the semismooth Newton method for solving \eqref{PPA-DUAL-root} are given below.


\setlength{\fboxrule}{0.8pt}
\noindent
	{
		\begin{algorithm} \label{SNCG}({\bf  Semismooth Newton method(SNCG) for system \eqref{PPA-DUAL-root}})
			\begin{description}
				\item[Initialization] Fix~$k,j\in\mathbb{N}$. Choose $\underline{\eta}>0$, $\beta\in(0,1)$, $\varsigma\in(0,1]$ and $0<c_1<c_2<1$. Let $\xi^0=\xi^j$ and set $l=0$.
				
				\item[{\rm{\bf while}}] termination conditions are not met {\rm{\bf do}}
				
				\item[S.1] Choose~$U^l\in\partial[\mathcal{P}_{\gamma^{-1}_{2,k}f}](z^k+\gamma^{-1}_{2,k}\xi^l)$ and $V^l\in\partial[\mathcal{P}_{\gamma_{1,k}^{-1}h_k}](x_k-\gamma_{1,k}^{-1}A^{\top}\xi^l)$. Set $W^l=\varrho_j I+\gamma^{-1}_{2,k}(I-U^l)+r_{1,k}^{-1}A[I-V^l]A^{\top}$. Solving the following linear system
				\begin{equation}\label{SNCG-dj}
					W^ld=-\nabla\Upsilon_{k,j}(\xi^l)
				\end{equation}
				yields~$d^l\in\mathbb{R}^p$ such that $\|W^ld+\nabla\Upsilon_{k,j}(\xi^l)\|\leq\min(\underline{\eta},\|\nabla\Upsilon_{k,j}(\xi^l)\|^{1+\varsigma})$.
				
				\item[S.2] Set $\alpha_l=\beta^{m_l}$, where $m_l$ is the smallest nonnegative integer $m$ that satisfies the following relationships
				\begin{align} \Upsilon_{k,j}(\xi^l+\beta^md^l)\leq\Upsilon_{k,j}(\xi^l)+c_1\beta^m\langle\nabla\Upsilon_{k,j}(\xi^l),d^l\rangle,
				\end{align}
				\begin{align}
					|\langle\nabla\Upsilon_{k,j}(\xi^l+\beta^md^l),d^l\rangle|\leq c_2|\langle\nabla\Upsilon_{k,j}(\xi^l),d^l\rangle|.
				\end{align}
				\item[S.3] Set~$\xi^{l+1}=\xi^l+\alpha_ld^l$ and $l\leftarrow l+1$, and return to step S.1.
				
				\item[{\rm{\bf end~while}}]
				
			\end{description}
		\end{algorithm}
	}


\begin{remark}
	When we are solving the linear system \eqref{SNCG-dj}, based on the size of the sample size $n$ and the number of features $p$ of the problem, if $(n,p)$ is of a small to medium scale, the direct method can be used to solve it. That is, in this paper, the Sherman Morrison Woodbury formula is used as follows:
	\begin{align*}
	&(I_p+A^{\top}A)^{-1}=I_p-A^{\top}(I_n+AA^{\top})^{-1}A,\\
	&(I_n+AA^{\top})^{-1}=I_n-A(I_p+A^{\top}A)^{-1}A^{\top},
	\end{align*}
	where if $n<p$, then Cholesky decomposition can be used to process $I_n+AA^{\top} $; If $n>p$, this article discusses $I_p+A^{\top}A$ for processing. Of course, in small-scale problems, this article can also use the conjugate gradient method to solve linear systems \eqref{SNCG-dj}. If $(n,p)$ is large-scale, the conjugate gradient method can be used.
	
	Next, the termination conditions of algorithm \ref{SNCG} in this article are considered. For $k\in \mathbb{N}\cup\{0\}$, let $\xi^{k,*}$ be the unique root of the system \eqref{nonsmooth system}. Set $x^{k,*}=\mathcal{P}_{r_k^{-1}h_k}(r_k^{-1}(\gamma_{1,k}x^k-A^{\top}\xi^{k,*}))$ and~$z^{k,*}=\mathcal{P}_{\gamma^{-1}_{2,k}f}(z^k+\gamma^{-1}_{2,k}\xi^{k,*})$. Then $Ax^{k,*}-z^{k,*}-b=0$, that is, ~$(x^{k,*},z^{k,*})$ is a feasible solution of the problem~\eqref{eq-PMM-penalty-gz}, and it is obtained that
	\begin{align*}
	&f_1(z^{k,*})+\frac{\mu}{2}\|x^{k,*}\|^2+h_k(x^{k,*})+\frac{\gamma_{1,k}}{2}\|x^{k,*}-x^k\|^2\\
	&+\frac{\gamma_{2,k}}{2}\|z^{k,*}-z^k\|^2+\Psi_k(\xi^{k,*})=\frac{\|x^k\|^2}{2}+\frac{\|z^k\|^2}{2}.
	\end{align*}
	Thus, the dual gap is ${\rm val_{gap}}:=\frac{\|x^k\|^2}{2}+\frac{\|z^k\|^2}{2}$, and then the termination rules of~\ref{SNCG} can be set as $\frac{|{\rm val_{gap}}|}{1+\|b\|}\leq\epsilon^k_{\rm SNCG}$ and $\frac{\|\Psi_k(\xi^{k,*})\|}{1+\|b\|}\leq\epsilon^k_{\rm SNCG}$.
\end{remark}

\begin{remark}\label{weighted-L2}
	This section provides the proximal mapping of function $h_k(x)$ and the Clarke Jacobian of this proximal mapping. For arbitrarily given $\mu\geq0$ and any $x\in\mathbb{R}^p$, set $h_k(x):=\sum_{i=1}^mv_i^k\|x_{J_i}\|+\frac{\mu}{2}\|x\|^2$. Then for any $z\in\mathbb{R}^p$, it holds $[\mathcal{P}_{\gamma^{-1}}h_k(z)]_{J_i}=\frac{\gamma}{\gamma+\mu}\max(1-v_i^k/(\gamma\|z_{J_i}\|))z_{J_i}$
	and its corresponding Clarke Jacobian is
	\begin{align}
		&\partial_C(\mathcal{P}_{\gamma^{-1}}h_k)(z)=\nonumber\\
		&\left\{{\rm diag}(\xi_{J_1},\ldots,\xi_{J_m})\mid\xi_{J_i}=\begin{cases}\label{weighted-l2}
			\frac{\gamma}{\gamma+\mu}(I-M),\ {\rm if}~|z_i|>\overline{c},\\
			\frac{\gamma\alpha}{\gamma+\mu}(I-M),\ {\rm if}~|z_i|=\overline{c},\\
			\{0\},\ {\rm if}~|z_i|<\overline{c}.
		\end{cases}\right\}
	\end{align}
	where $M=\frac{1}{\gamma}(\frac{1}{\|z_{J_i}\|}I-\frac{zz^{\top}}{\|z_{J_i}\|^3})$,~$\overline{c}=\gamma^{-1}v_i^k$ for $\alpha\in[0,1]$.
\end{remark}
In this subsection, the proximal alternating direction method of multiplier(pADMM) is also used to solve the problem~\eqref{sr-gopt}, which can be equivalently written as
\begin{align}\label{p-sim-opt}
	&\mathop{\min}_{x\in\mathbb{R}^p,z\in\mathbb{R}^n}\{\|z\|_q/\sqrt{n}+\frac{\mu}{2}\|x\|^2\nonumber\\
	&\qquad+\sum_{i=1}^m\lambda(1-w_i^k)\|x_{J_i}\|):Ax-z-b=0\}.
\end{align}
For a given parameter $\sigma>0$, the augmented Lagrange function of \eqref{p-sim-opt} is
\begin{align*}
&\mathcal{L}_\sigma(x,z;\xi)=\|z\|_q/\sqrt{n}+\frac{\mu}{2}\|x\|^2+\sum_{i=1}^m\lambda(1-w_i^k)\|x_{J_i}\|\\
&\qquad+\langle \xi,Ax-z-b\rangle+\frac{\sigma}{2}\|Ax-z-b\|^2.
\end{align*}
This paper will provide the detailed iterative steps of pADMM for solving problem \eqref{p-sim-opt}.


\setlength{\fboxrule}{0.8pt}
\noindent
	{
		\begin{algorithm} \label{spADMM-pri}(pADMM for solving~\eqref{p-sim-opt})
			\begin{description}
				\item[Initialization:] Choose parameters~$\sigma>0$,~$\gamma\geq\sigma\|A^{\top}A\|$ and $\tau\in(1,\frac{1+\sqrt{5}}{2})$, pick the initial point~$(x^0,z^0;\xi^0)\in\mathbb{R}^p\times\mathbb{R}^n\times\mathbb{R}^n$ with~$x^0=x^{k-1}$. Set~$j=0$.
				\item[{\rm{\bf while}}]termination conditions do not satisfies~{\rm{\bf do}}
				\item[S.1]  Computing the following convex optimization problem
				\begin{equation}\label{z-part}
					z^{j+1}=\mathop{\arg\min}_{z\in\mathbb{R}^n}
					\mathcal{L}_{\sigma}(z,x^{j};\xi^j).
				\end{equation}

				\item[S.2] Computing the following convex optimization problem
				\begin{equation}\label{x-part}
					x^{j+1}=\mathop{\arg\min}_{x\in\mathbb{R}^p}
					\mathcal{L}_{\sigma}(z^{j+1},x;\xi^j)+\frac{1}{2}\|x-x^j\|^2_{\gamma I-\sigma A^{\top}A}.
				\end{equation}
				\item[S.3] Updating multiplier by following equation
				\begin{equation}\label{xi-part}
					\xi^{j+1}:=\xi^j+\tau\sigma(Ax^{j+1}-z^{j+1}-b).
				\end{equation}
				
				\item[S.4] Set~$j\leftarrow j+1$ and return to step~{\rm{\bf S.1}}.
				\item[{\rm{\bf end~while}}]
			\end{description}
		\end{algorithm}
	}


\begin{remark}\label{t1}
	(i)~This paper first considers the subproblem \eqref{z-part} on $z$, i.e.,
	\begin{align*}		z^{j+1}=\mathop{\arg\min}_{z\in\mathbb{R}^n}\Big\{\frac{\|z\|}{\sqrt{n}}+\frac{\sigma}{2}\|z-u^j\|^2\Big\}.
	\end{align*}
	It's easy to obtain
	\begin{equation}
		z^{j+1}=\begin{cases}
			0 &{\rm if}~\|u^j\|\leq1/(\sigma\sqrt{n}),\\
			\frac{u^j(\|u^j\|-1/({\sigma\sqrt{n}}))}{\|u^j\|} &{\rm if} ~\|u^j\|>1/(\sigma\sqrt{n}),
		\end{cases}
	\end{equation}
	where $u^j=Ax^j-b+\sigma^{-1}\xi^j$.

	(ii)~Next this paper considers the subproblem \eqref{x-part} on $x$ in the following
	\begin{align*}		x^{j+1}=\mathop{\arg\min}_{x\in\mathbb{R}^p}\Big\{\frac{\gamma}{2}\|x-\widetilde{x}\|^2+h_k(x)\Big\},
	\end{align*}
	where $\widetilde{x}:=x^j-\gamma^{-1}A^{\top}(\sigma(A x^j-z^{j+1}-b)+\xi^j)]$, $h_k(x):=\sum_{i=1}^mv_i^k\|x_{J_i}\|+\frac{\mu}{2}\|x\|^2$ and $v^k:=\lambda(1-w^k)$ for $i=1,\ldots,m$, it yields from Remark \ref{weighted-L2} that $[\mathcal{P}_{\gamma^{-1}}h_k(g)]_{J_i}=\frac{\gamma}{\gamma+\mu}\max(1-\frac{v_i^k}{\gamma\|g_{J_i}\|},0)g_{J_i}$.
\end{remark}

\subsection{Numerical testing for~$\ell_1$ loss function}
The numerical test is divided into two parts. The first part is the synthetic data as follows. In this subsection, the synthetic data is first generated through the observation model \eqref{observation model}, group sparsity rate $s^*$ satisfies $s^*\ll p$. Each row of the input matrix $A\in\mathbb{R}^{n\times p}$ follows a multivariate normal distribution $N(0,\Sigma)$, $\Sigma$ is the corresponding covariance matrix, and there are several ways to generate the covariance matrix $\Sigma$: (1) $\Sigma=I$; (2) Autoregressive structure: $\Sigma=(0.5^{|i-j|})_{ij}$; (3) Autoregressive structure: $\Sigma=(0.8^{|i-j|})_{ij}$; (4) Composite symmetric structure: $\Sigma=(\alpha+(1-\alpha)\mathbb{I}_{i=j})_{i}$ with $\alpha=0.6$; (5) Composite symmetric structure: $\Sigma=(\alpha+(1-\alpha)\mathbb{I}_{i=j})_{i}$ with $\alpha=0.8$.
The non-zero elements of the noise vector $\varpi$ are generated as follows: (1) Normal distribution $N(0,100)$; (2) $t$-Distribution $\sqrt{2}\times t_4$ with degrees of freedom 4; (3) The Cauchy distribution with a density function of $d(u)=\frac {1} {\Pi(1+u^2)}$; (4) Mixed normal distribution~$N(0,\sigma^2)$, where~$\sigma\sim{\rm Unif} (1,5)$; (5) A Laplace distribution with a density function of $d(u)=0.5 {\rm exp}(-|u|)$. The authors randomly select the $\overline{r}$ groups from the $m$ groups, such as $\{m_1, m_2,\ldots, m_{\overline {r}}\}$, as the support for~$x^*$, $x^*_{J_j}=5{\rm randn}(|J_i|,1)-0.5$ and~$b=Ax^*+\varpi$. The data is grouped in the same way as in reference \cite{Zhang2020}.

Before testing, an example is used to test the impact of the noise vector $\varpi$ on the error between the solutions generated by the above two algorithms and the true solution. If the number of groups for $\varpi$ is $m$ and the group support is $\mathcal{I}$, then the group sparsity rate of $\varpi$ is $(m-|\mathcal{I}|)/m$.

\begin{example}
	Given~$(n,p)=(1000,5000)$,~the chosen covariance matrix is an autoregressive structure:~$\Sigma=(0.5^{|i-j|})_{ij}$, true solution is~$x^*_{J_j}=5 {\rm randn}(|J_i|,1)-0.5$.
\end{example}
The figure fig1-1 below shows the curves of the relative errors obtained by two algorithms under the same parameter $\lambda$ and different noise group sparsity. From the figure, it can be seen that the relative errors obtained by the two algorithms decrease with the increase of group sparsity, which is consistent with the conclusion of the theorem verified in this paper. In addition, the relative error obtained by the Algorithm \ref{PMM} is smaller than that obtained by pADMM. The figure fig1-2 shows the relative error curves obtained by the two algorithms at different $\lambda$. From the graph, it can be seen that if the group sparsity of the noise vector is properly controlled, there will be an interval of $\lambda$ between the two algorithms, so that within this interval, the relative errors obtained by the algorithms will tend to stabilize.

Next, the authors present 20 examples of different combinations of four covariance matrices and five types of noise generation methods, and the conclusions obtained under two algorithms. The conclusion provides the values of $\lambda$, the group sparsity of the solutions obtained by the two algorithms, the objective function value, the relative error, and the running time (excluding the time to obtain the initial point). From the table \ref{result-synthetic-l1}, it can be seen that the relative error and time of the solutions obtained by the PMM algorithm are smaller than those obtained by~pADMM. Please refer to the following table for details.

In the following, denote the number of nonzero of solution $\widehat{x}$ by ${\rm nnz}:=\sum_{i=1}^p\mathbb{I}\{|{x}_i|>10^{-8}\|x\|_\infty\}$, denote the number of groups by ${\rm ng}$, $x^{\rm out}$ represents the output of the algorithm, ${\rm L2err}:=\frac{\|x^{\rm out}-x^*\|}{\|x^*\|}$ is relative error $\ell_2$. ${\rm pobj}$ is the objective value of problem, $\eta_{\rm kkt}$ represents kkt residual, ${\rm Time(s)}$ represents time (seconds). Let the termination accuracy of PMM be $\epsilon_{\rm PMM}=10^{-5}$, the termination accuracy of SNCG be~$\epsilon_{\rm SNCG}=10^{-8}$ and the termination accuracy of pADMM be $\epsilon_{\rm pADMM}=10^{-5}$. In the synthetic data set, the following parameters are selected: $a=4$, $\mu=10^{-8}$, $\sigma=1$, $\tau=1.618$, $\rho=2$, $\varrho=1/1.4$ and $\lambda_0={\gamma}_1\max(10^{-6},0.05\max({\rm abs}(A^{\top}b)))$ as the value at which the initial point is solved. Also, the authors select $\lambda={\gamma}_2\lambda_0$, the specific values of ${\gamma}_1,{\gamma}_2$ correspond to the table, $\widetilde{\gamma}_{1,0}=10$,~$\widetilde{\gamma}_{2,0}=\widetilde{\gamma}_{1,0}$, $\underline{\gamma}_1=\underline{\gamma}_2=10^{-6}$, $\gamma_{1,0}=\gamma_{2,0}=\max(10^{-4},\widetilde{\gamma}_{1,0})$.

The second part of the test is aimed at the large-scale LIBSVM dataset~$(A,b)$ \cite{Chang2011} originated from the UCI database \cite{Lichman}. Similar to literature \cite{Li2018}, this paper also uses method in \cite{Huang2010} to expand the features of these data through polynomial basis functions. The last digit in these data set represents the order of the basis function, for example, the examples we tested like abalone7, bodyfat7, housing7, mpg7, pyrim5 and space ga9. In testing LIBSVM data, the authors select parameters $a=6$, $\mu=10^{-8}$, $\sigma=1.168$, $\varrho=1/1.4$, the spacific values of $\lambda=\overline{\gamma}\max(10^{-6},0.05\max({}(A^{\top}b)))$, $\overline{\gamma}$ correspond to the table, select $\rho=\max(1,6/\|x_0\|_\infty)$. In examples 1, 2, and 8, this paper chooses $\widetilde{\gamma}_{1,0}=0.001$ and picks $\widetilde{\gamma}_{1,0}=0.01$ in other examples. In addition, $\widetilde{\gamma}_{2,0}=0.01\widetilde{\gamma}_{1,0}$ and $\gamma_{1,0}=\gamma_{2,0}=\max(10^{-4},\widetilde{\gamma}_{1,0})$ are selected in all examples.

Table~\ref{result-lib-l1} compares the numerical results of the pADMM and the PMM in solving the group zero-norm regularized problem with $\ell_1$ loss. For each composite data set, we fit the model onto the training data set and use the validation data set to select the regularization parameter $\lambda$. As you can see, in most cases, PMM not only achieves better group sparsity than pADMM, but also it takes less time.

\subsection{Numerical testing for $\ell_2$ loss function}
Numerical testing is also divided into two main parts. The first part is synthetic data, which is generated by the observation model \eqref{observation model}. Each row of the input matrix~$A\in\mathbb{R}^{n\times p}$ follows a multivariate normal distribution~$N(0,\Sigma)$, where~$\Sigma$ is the corresponding covariance matrix. The true predicted variable~$x^*$ is an unknown sparse vector, and the group sparsity~$s^*$ satisfies $s^*\ll p$. In addition, the data matrix generation method is $A={\rm randn}(n,p)$, this paper randomly selects~$\overline{r}$ groups from $m$ groups, such as $\{m_1, m_2,\ldots, m_{\overline {r}}\}$ as the support for~$x^*$, $x^*_{J_j}=5 {\rm randn}(|J_i|,1)-0.5$, noise vector $\varpi={\rm randn}(n, 1)/\|{\rm randn}(n,1)\|$ and $b=Ax^*+{\rm randn}(n,1)/\|{\rm randn}(n,1)\|$. The data is grouped in the same way as in reference \cite{Zhang2020}.

Similar to the previous section, this section also uses an example to test the impact of the noise vector~$\varpi$ on the error between the solutions generated by the above two algorithms and the true solutions. If the number of groups for $\varpi$ is~$m$ and the group support is~$\mathcal{I}$, then the group sparsity of~$\varpi$ is~$(m-|\mathcal{I}|)/m$.

\begin{example}
	Set~$(n,p)=(ceil(4*4096/15),4*4096)$, the covariance matrix is taken as: $\Sigma=(0.5^{|i-j|})_{ij}$, the true solution is~$x^*_{J_j}=5 {\rm randn}(|J_i|,1)-0.5$.
\end{example}
The figure fig2-1 below shows the curves of the relative errors obtained by two algorithms under the same parameter~$\lambda$~and different noise group sparsity. From the figure, it can be seen that the relative errors obtained by the two algorithms decrease with the increase of group sparsity, which is consistent with the conclusion of the theorem verified in the article. In addition, the relative error obtained by the PMM~algorithm~is smaller than that obtained by pADMM. The figure fig2-2 on the right shows that if the group sparsity of the noise vector is properly controlled, there will be an interval of~$\lambda$, within which the relative errors obtained by both algorithms will tend to stabilize.

The real dataset tested in the second part is the same as in the previous section. Denote~the objective values by ${\rm pobj}$,~let $\eta_{\rm kkt}$ be~kkt residual. Termination accuracy of PMM is $\epsilon_{\rm PMM}=10^{-7}$,~the termination accuracy of SNCG is~$\epsilon_{\rm SNCG}=10^{-8}$,~the termination accuracy of pADMM is $\epsilon_{\rm pADMM}=10^{-5}$. In the synthesized dataset, this section selects the following parameters: $a=4$, $\mu=10^{-8}$,~$\sigma=1$,~$\tau=1.618$,~$\rho=2$,~$\varrho=1/1.4$, $\widetilde{\gamma}_{1,0}=0.01$,~$\widetilde{\gamma}_{2,0}=0.1\widetilde{\gamma}_{1,0}$,~$\gamma_{1,0}=\gamma_{2,0}=\max(10^{-4},\widetilde{\gamma}_{1,0})$,~$\underline{\gamma}_1=\underline{\gamma}_2=10^{-6}$, where the specific values of $\lambda=\overline{\gamma}\max(10^{-6},0.05\max({\rm abs}(A^{\top}b)))$ and $\overline{\gamma}$ are listed in the table. In testing the LIBSVM data, the corresponding parameters are selected: $a=6$, $\mu=10^{-8}$, $\sigma=1.168$, $\varrho=1/1.4$, $\lambda=\overline{\gamma}\max(10^{-6},0.05\max({\rm abs}(A^{\top}b)))$, and the specific values of $\overline{\gamma}$ and $\rho=\max(1,6/\|x_0\|_\infty)$ are listed in the table. In examples 1, 2, and 8, the authors choose $\widetilde{\gamma}_{1,0}=0.001$ and choose $\widetilde{\gamma}_{1,0}=0.01$ in other examples. In addition, $\widetilde{\gamma}_{2,0}=0.01\widetilde{\gamma}_{1,0}$ and $\gamma_{1,0}=\gamma_{2,0}=\max(10^{-4},\widetilde{\gamma}_{1,0})$ are selected in all examples.

This subsection compares the numerical results of the pADMM and the PMM in solving the group zero-norm regularized problem with the square root loss. In the PMM proposed in this paper, the authors use it as the initial point for the second stage, if $\eta_{kkt}<10^{-4}$, PMM stops. If $\eta_{kkt}<10^{-6}$ or when the algorithm reaches the preset maximum number of iterations (500~for PMM and 10000 for pADMM) or the preset maximum running time of 4 hours, the algorithm will terminate. For each composite dataset, fit the model onto the training dataset and use the validation dataset to select the regularization parameter $\lambda$. The test results of SCAD regularized PMM and pADMM on the synthetic dataset and real test set are shown in Table~\ref{result-synthetic-l2} and Table~\ref{result-lib-l2}, respectively. As you can see, in most cases, PMM not only obtains better objective function values than pADMM, but also achieves better group sparsity, although sometimes it takes more time.

\includegraphics[height=0.54\textwidth]{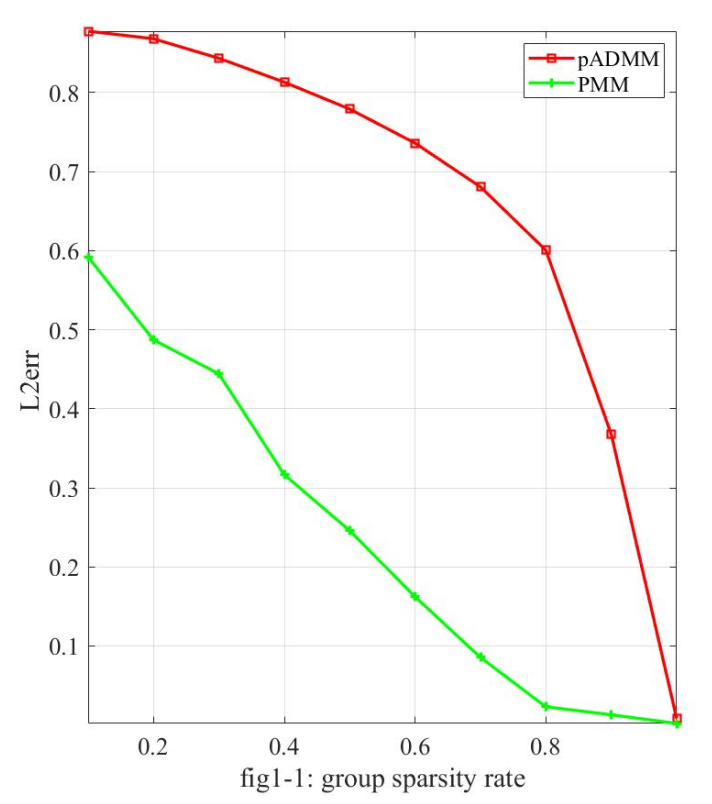}
\includegraphics[height=0.54\textwidth]{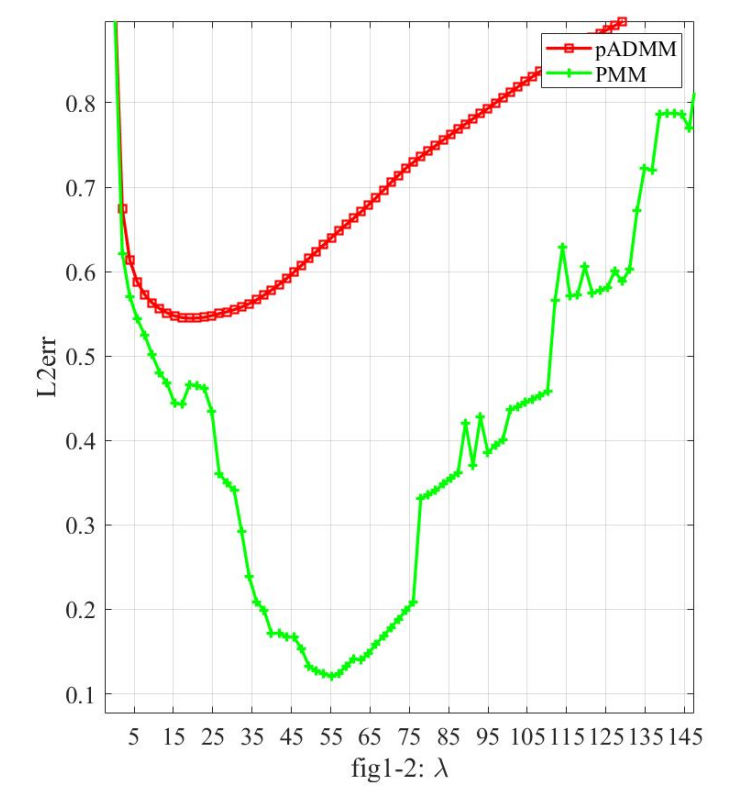}
\includegraphics[height=0.54\textwidth]{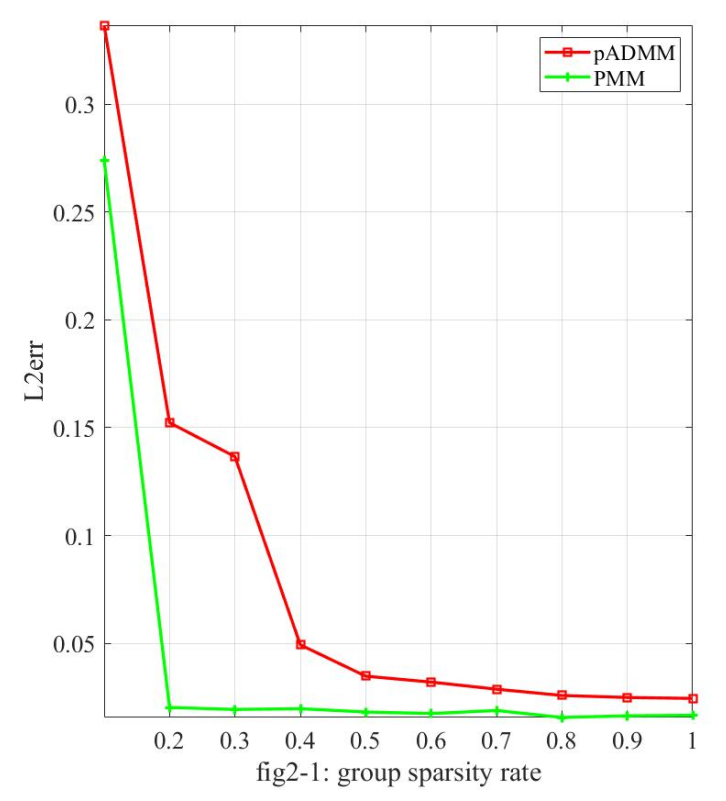}
\includegraphics[height=0.54\textwidth]{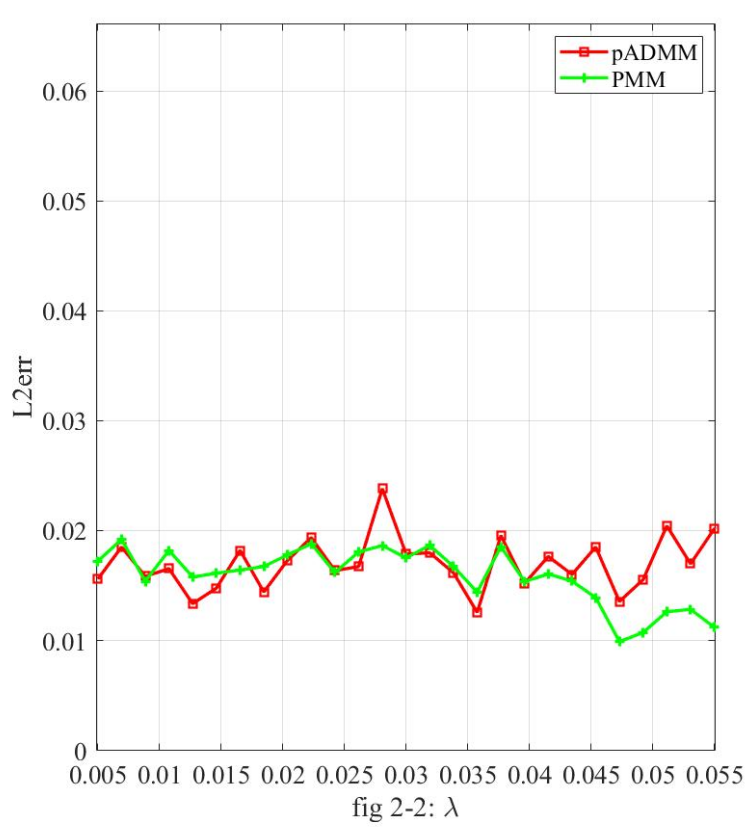}

\begin{sidewaystable}
	\caption{Performance of two algorithms on the synthetic data set}
	\label{result-synthetic-l1}
	\centering
	\scalebox{0.85}{
		\begin{tabular}{c|c|c|c|cc|cc|cc}
			\hline
			\multirow{2}*{Probname}   & \multirow{2}*{ng}     & \multirow{2}*{${\gamma}_1$} & \multirow{2}*{${\gamma}_2$}
			& \multicolumn{2}{ c| }{L2err}   & \multicolumn{2}{ c| }{pobj}     &\multicolumn{2}{ c }{Time(s)}    \\
			&  & & & PMM&pADMM  &  PMM&pADMM  &   PMM&pADMM \\
			\hline
			\multirow{2}*{(1,1)}  & \multirow{2}*{1} &  \multirow{2}*{0.2} &  \multirow{2}*{2}
			&  \multirow{2}*{8.6032e-12}     & \multirow{2}*{5.9192e-04}     & \multirow{2}*{2.4773}
			&  \multirow{2}*{2.4774}  & \multirow{2}*{6.775} & \multirow{2}*{8.947}  \\
			\multirow{2}*{(1,2)}  & \multirow{2}*{1} & \multirow{2}*{0.1}  &  \multirow{2}*{2}
			&  \multirow{2}*{5.3186e-04}     & \multirow{2}*{3.7698e-04}     & \multirow{2}*{0.453}
			&  \multirow{2}*{0.453}  & \multirow{2}*{5.326} & \multirow{2}*{34.873} \\
			\multirow{2}*{(1,3)}  & \multirow{2}*{1} &  \multirow{2}*{0.2}  &  \multirow{2}*{2}
			&  \multirow{2}*{1.95e-07}     & \multirow{2}*{9.34e-04}     & \multirow{2}*{3.2621}
			&  \multirow{2}*{3.2621} & \multirow{2}*{1.514} & \multirow{2}*{8.791} \\
			\multirow{2}*{(1,4)}   & \multirow{2}*{1} & \multirow{2}*{0.2}  &  \multirow{2}*{2}
			&  \multirow{2}*{2.09e-2}     & \multirow{2}*{2.28e-02}     & \multirow{2}*{0.7445}
			&  \multirow{2}*{0.7446}   & \multirow{2}*{2.144}     & \multirow{2}*{8.966} \\
			\multirow{2}*{(1,5)} & \multirow{2}*{1} &  \multirow{2}*{0.15} &  \multirow{2}*{2}
			&  \multirow{2}*{6.9132e-09}     & \multirow{2}*{1.1e-03}     & \multirow{2}*{0.3253}
			&  \multirow{2}*{0.3254} & \multirow{2}*{1.395}  & \multirow{2}*{9.052} \\
			& &  &  & & & & & &  \\
			\hline
			\multirow{2}*{(2,1)} & \multirow{2}*{1} &  \multirow{2}*{0.2} &  \multirow{2}*{3}
			&  \multirow{2}*{4.2456e-12} & \multirow{2}*{2.11e-01} & \multirow{2}*{2.4773}
			&  \multirow{2}*{4.1884} & \multirow{2}*{1.629} & \multirow{2}*{4.1884} \\
			\multirow{2}*{(2,2)}   & \multirow{2}*{1} &  \multirow{2}*{0.15}  &  \multirow{2}*{2}
			&  \multirow{2}*{3.2397e-17}     & \multirow{2}*{7.9e-03}     & \multirow{2}*{0.4529}
			&  \multirow{2}*{0.4967}  & \multirow{2}*{0.836}& \multirow{2}*{0.754}  \\
			\multirow{2}*{(2,3)}   & \multirow{2}*{1} &  \multirow{2}*{0.2}   &  \multirow{2}*{3}
			&  \multirow{2}*{4.04e-18}     & \multirow{2}*{6.5e-03}     & \multirow{2}*{3.2621}
			&  \multirow{2}*{3.2996}  & \multirow{2}*{0.957}& \multirow{2}*{0.723}  \\
			\multirow{2}*{(2,4)}   & \multirow{2}*{1} &  \multirow{2}*{0.15}   &  \multirow{2}*{3}
			&  \multirow{2}*{1.3095e-16}     & \multirow{2}*{2.44e-02}     & \multirow{2}*{0.743}
			&  \multirow{2}*{0.8949}  & \multirow{2}*{6.563}& \multirow{2}*{34.742}  \\
			\multirow{2}*{(2,5)}   & \multirow{2}*{1} &  \multirow{2}*{0.15}   &  \multirow{2}*{3}
			&  \multirow{2}*{3.70e-17}     & \multirow{2}*{8.3e-03}     & \multirow{2}*{0.3253}
			&  \multirow{2}*{0.3756}  & \multirow{2}*{0.779}& \multirow{2}*{0.628}  \\
			& &  &  & & & & & &  \\
			\hline
			\multirow{2}*{(3,1)}   & \multirow{2}*{1} &  \multirow{2}*{0.15}  &  \multirow{2}*{3}
			&  \multirow{2}*{7.43e-12}     & \multirow{2}*{4.57e-02}     & \multirow{2}*{2.4773}
			&  \multirow{2}*{2.5938}  & \multirow{2}*{0.18}& \multirow{2}*{1.802}  \\
			\multirow{2}*{(3,2)}   & \multirow{2}*{1} &  \multirow{2}*{0.18}   &  \multirow{2}*{5}
			&  \multirow{2}*{3.87e-09}     & \multirow{2}*{5.86e-02}     & \multirow{2}*{0.4529}
			&  \multirow{2}*{0.6414}  & \multirow{2}*{1.68}& \multirow{2}*{1.81}  \\
			\multirow{2}*{(3,3)}   & \multirow{2}*{1} &  \multirow{2}*{0.1}   &  \multirow{2}*{3}
			&  \multirow{2}*{1.56e-10}     & \multirow{2}*{6.65e-02}     & \multirow{2}*{3.2621}
			&  \multirow{2}*{3.542}  & \multirow{2}*{1.239}& \multirow{2}*{1.82}  \\
			\multirow{2}*{(3,4)}   & \multirow{2}*{1} &  \multirow{2}*{0.1}   &  \multirow{2}*{3}
			&  \multirow{2}*{7.907e-15}     & \multirow{2}*{5.34e-02}     & \multirow{2}*{0.743}
			&  \multirow{2}*{0.8999}  & \multirow{2}*{0.269}& \multirow{2}*{1.829}  \\
			\multirow{2}*{(3,5)}   & \multirow{2}*{1} &  \multirow{2}*{0.1}   &  \multirow{2}*{3}
			&  \multirow{2}*{6.2996e-12}     & \multirow{2}*{1.1e-01}     & \multirow{2}*{0.3253}
			&  \multirow{2}*{0.9805}  & \multirow{2}*{1.614}& \multirow{2}*{1.884}  \\
			& &  &  & & & & & &   \\
			\hline
			\multirow{2}*{(4,1)}   & \multirow{2}*{1} &  \multirow{2}*{0.1}  &  \multirow{2}*{2}
			&  \multirow{2}*{7.4911e-05}     & \multirow{2}*{1.0e-01}     & \multirow{2}*{2.4778}
			&  \multirow{2}*{2.4773}  & \multirow{2}*{5.579}& \multirow{2}*{25.994}  \\
			\multirow{2}*{(4,2)}    & \multirow{2}*{1} &  \multirow{2}*{0.15}   &  \multirow{2}*{3}
			&  \multirow{2}*{3.57e-12}     & \multirow{2}*{1.62e-07}     & \multirow{2}*{0.4529}
			&  \multirow{2}*{0.4529}  & \multirow{2}*{0.064}& \multirow{2}*{36.229}  \\
			\multirow{2}*{(4,3)}    & \multirow{2}*{1} &  \multirow{2}*{0.15}   &  \multirow{2}*{3}
			&  \multirow{2}*{3.68e-12}     & \multirow{2}*{6.99e-09}     & \multirow{2}*{3.2621}
			&  \multirow{2}*{3.2621}  & \multirow{2}*{0.054}& \multirow{2}*{4.156} \\
			\multirow{2}*{(4,4)}    & \multirow{2}*{1} &  \multirow{2}*{0.15}   &  \multirow{2}*{3}
			&  \multirow{2}*{2.89e-12}     & \multirow{2}*{6.58e-09}     & \multirow{2}*{0.743}
			&  \multirow{2}*{0.743}  & \multirow{2}*{0.062}& \multirow{2}*{7.175}  \\
			\multirow{2}*{(4,5)}    & \multirow{2}*{1} &  \multirow{2}*{0.15}   &  \multirow{2}*{3}
			&  \multirow{2}*{8.85e-14}     & \multirow{2}*{4.83e-02}     & \multirow{2}*{0.3253}
			&  \multirow{2}*{0.5773}  & \multirow{2}*{1.271}& \multirow{2}*{39.527}  \\
			& &  &  & & & & & &  \\
			\hline
			\multirow{2}*{(5,1)}   & \multirow{2}*{1} &  \multirow{2}*{0.1}  &  \multirow{2}*{2}
			&  \multirow{2}*{9.40e-13}     & \multirow{2}*{2.0e-03}     & \multirow{2}*{2.4773}
			&  \multirow{2}*{2.4923}  & \multirow{2}*{0.144}& \multirow{2}*{1.831}  \\
			\multirow{2}*{(5,2)}    & \multirow{2}*{1} &  \multirow{2}*{0.15}   &  \multirow{2}*{3}
			&  \multirow{2}*{5.96e-12}     & \multirow{2}*{3.94e-02}     & \multirow{2}*{0.4529}
			&  \multirow{2}*{0.5951}  & \multirow{2}*{0.349}& \multirow{2}*{9.359}  \\
			\multirow{2}*{(5,3)}    & \multirow{2}*{1} &  \multirow{2}*{0.15}   &  \multirow{2}*{3}
			&  \multirow{2}*{9.56e-12}     & \multirow{2}*{6.62e-02}     & \multirow{2}*{3.2621}
			&  \multirow{2}*{3.5052}  & \multirow{2}*{0.519}& \multirow{2}*{9.246} \\
			\multirow{2}*{(5,4)}    & \multirow{2}*{1} &  \multirow{2}*{0.15}   &  \multirow{2}*{3}
			&  \multirow{2}*{8.37e-12}     & \multirow{2}*{3.73e-02}     & \multirow{2}*{0.743}
			&  \multirow{2}*{0.9093}  & \multirow{2}*{0.54}& \multirow{2}*{8.973}  \\
			\multirow{2}*{(5,5)}    & \multirow{2}*{1} &  \multirow{2}*{0.15}   &  \multirow{2}*{3}
			&  \multirow{2}*{2.34e-08}     & \multirow{2}*{2.2e-03}     & \multirow{2}*{0.3253}
			&  \multirow{2}*{0.3328}  & \multirow{2}*{0.157}& \multirow{2}*{9.229}  \\
			& &  &  & & & & & &  \\
			\hline
	\end{tabular}}
\end{sidewaystable}

\begin{sidewaystable}
	\caption{Performance of two algorithms on the synthetic data set}
	\label{result-synthetic-l2}
	\centering
	\scalebox{0.82}{
		\begin{tabular}{c|c|c|cc|cc|cc|cc }
			\hline
			\multirow{2}*{Probname}   & \multirow{2}*{nnz}     & \multirow{2}*{$\overline{\gamma}$}
			& \multicolumn{2}{ c| }{L2err}   & \multicolumn{2}{ c| }{$\eta_{\rm kkt}$}     &\multicolumn{2}{ c| }{pobj}            & \multicolumn{2}{ c }{Time(s)} \\
			ng&   & & PMM&pADMM  &  PMM&pADMM  &   PMM&pADMM   &  PMM&pADMM \\
			\hline
			(334,5000)  & \multirow{2}*{8} &  \multirow{2}*{3}
			&  \multirow{2}*{1.25e-02}     & \multirow{2}*{2.26e-02}     & \multirow{2}*{1.2e-08}
			&  \multirow{2}*{3.35e-04}  & \multirow{2}*{5.24e-04} & \multirow{2}*{6.35e-04} & \multirow{2}*{0.969} & \multirow{2}*{18.311} \\
			500  & &    & & & & & &  &  & \\
			\hline
			(667,10000)  & \multirow{2}*{8} & \multirow{2}*{3}
			&  \multirow{2}*{9.3e-03}     & \multirow{2}*{1.79e-02}     & \multirow{2}*{3.72e-08}
			&  \multirow{2}*{1.51e-04}  & \multirow{2}*{2.63e-04} & \multirow{2}*{3.256e-04} & \multirow{2}*{2.534} & \multirow{2}*{71.754} \\
			500 & &   & & & &  & & &  & \\
			\hline
			(1000,15000)  & \multirow{2}*{8} &  \multirow{2}*{3}
			&  \multirow{2}*{7.3e-03}     & \multirow{2}*{1.22e-02}     & \multirow{2}*{2.68e-08}
			&  \multirow{2}*{9.69e-05} & \multirow{2}*{1.73e-04} & \multirow{2}*{2.04e-04} & \multirow{2}*{4.846} & \multirow{2}*{157.918}\\
			500  & &    & & & & & &  &  & \\
			\hline
			(1334,20000) & \multirow{2}*{8} &  \multirow{2}*{3}
			&  \multirow{2}*{5.8e-03}     & \multirow{2}*{1.36e-02}     & \multirow{2}*{6.46e-09}
			&  \multirow{2}*{9.07e-05}   & \multirow{2}*{1.33e-04} & \multirow{2}*{1.89e-04} & \multirow{2}*{6.95} & \multirow{2}*{298.104}\\
			500 & &    & & & &  & & &  & \\
			\hline
			(1667,25000)   & \multirow{2}*{8} & \multirow{2}*{5}
			&  \multirow{2}*{5.5e-03}     & \multirow{2}*{1.06e-02}     & \multirow{2}*{2.47e-08}
			&  \multirow{2}*{6.64e-05}   & \multirow{2}*{1.04e-04}     & \multirow{2}*{1.35e-04} &\multirow{2}*{10.878} &  \multirow{2}*{413.387}\\
			500   & &   & & & &  & & &  & \\
			\hline
			(2000,30000)& \multirow{2}*{8} &  \multirow{2}*{5}
			&  \multirow{2}*{4.6e-03}     & \multirow{2}*{1.1e-02}     & \multirow{2}*{2.83e-08}
			&  \multirow{2}*{5.96e-05} & \multirow{2}*{8.86e-05}  & \multirow{2}*{1.25e-04} & \multirow{2}*{12.525} &\multirow{2}*{595.586}\\
			500  & &    & & & & & &  &  & \\
			\hline
	\end{tabular}}
\end{sidewaystable}

\begin{sidewaystable}
	\caption{Performance of two algorithms on the LIBSVM data set}
	\label{result-lib-l1}
	\centering
	\scalebox{0.85}{
		\begin{tabular}{c|c|c|c|cc|cc|cc}
			\hline
			\multirow{2}*{Probname}   & \multirow{2}*{ng}   & \multirow{2}*{${\gamma}_1$}   & \multirow{2}*{${\gamma}_2$}
			& \multicolumn{2}{ c| }{nnz}    &\multicolumn{2}{ c| }{pobj}   & \multicolumn{2}{c}{Time(s)}  \\
			&   &  & & PMM&pADMM  &  PMM&pADMM  &   PMM&pADMM   \\
			\hline
			E2006.train  & \multirow{2}*{150} & \multirow{2}*{5e-5} & \multirow{2}*{5}
			&  \multirow{2}*{5}     & \multirow{2}*{6}     & \multirow{2}*{0.2605}
			&  \multirow{2}*{0.2620}  & \multirow{2}*{278.3120} & \multirow{2}*{1.7031e+03} \\
			(16087,150360)  & &  &  & & & & & &  \\
			\hline
			E2006.test  & \multirow{2}*{150} &  \multirow{2}*{5e-5} & \multirow{2}*{8}
			& \multirow{2}*{7}  &  \multirow{2}*{13}  & \multirow{2}*{0.2333}     & \multirow{2}*{0.2340} & \multirow{2}*{63.45} & \multirow{2}*{350.61} \\
			(3308,150358) & & &   & & & &  & &  \\
			\hline
			log1p.E2006.train   & \multirow{2}*{1425} & \multirow{2}*{8e-3} & \multirow{2}*{3} &\multirow{2}*{7}
			& \multirow{2}*{33} &\multirow{2}*{0.1461}  & \multirow{2}*{0.2732}
			& \multirow{2}*{6.59e+03} &  \multirow{2}*{1.03e+04}  \\
			(16087,4272227)  & & &  & & & & & &  \\
			\hline
			log1p.E2006.test & \multirow{2}*{1425} &  \multirow{2}*{1e-4} & \multirow{2}*{1} & \multirow{2}*{3}
			&  \multirow{2}*{44}  & \multirow{2}*{0.0014}
			&  \multirow{2}*{0.11}  & \multirow{2}*{296.5630}     & \multirow{2}*{2.2866e+03}  \\
			(3308,4272226)  & & &  & & & &  & & \\
			\hline
			abalone7   & \multirow{2}*{21} & \multirow{2}*{1e-3} & \multirow{2}*{1}
			&  \multirow{2}*{3}  &  \multirow{2}*{8}    & \multirow{2}*{1.4477}     & \multirow{2}*{2.5991}
			& \multirow{2}*{344.8810}     & \multirow{2}*{4.2898e+03} \\
			(4177,6435)   & & &   & & & &  & & \\
			\hline
			bodyfat7  & \multirow{2}*{3876}  & \multirow{2}*{1e-2}  & \multirow{2}*{3}
			&  \multirow{2}*{3}     & \multirow{2}*{2}     & \multirow{2}*{0.0020}
			&  \multirow{2}*{0.0051}  & \multirow{2}*{342.0210}     & \multirow{2}*{1.7581e+03}\\
			(252,116280)  & &  &  & & & & & &   \\
			\hline
			housing7    & \multirow{2}*{150} &  \multirow{2}*{5e-3}  & \multirow{2}*{1}
			&  \multirow{2}*{16}     & \multirow{2}*{16}     & \multirow{2}*{3.3016}
			&  \multirow{2}*{4.8431}     & \multirow{2}*{265.4630}  & \multirow{2}*{5.7502e+03} \\
			(506,77520)  & &  &  & & & & & &   \\
			\hline
			mpg7   & \multirow{2}*{11} &  \multirow{2}*{5e-3}  & \multirow{2}*{1}
			&  \multirow{2}*{3}     & \multirow{2}*{9}     & \multirow{2}*{1.6688}
			&  \multirow{2}*{1.1006} & \multirow{2}*{1.1142} & \multirow{2}*{0.2730} \\
			(392,3432)  & &  &  & & & &  & &  \\
			\hline
			pyrim5  & \multirow{2}*{150} &  \multirow{2}*{2e-3}  & \multirow{2}*{10}
			&  \multirow{2}*{2}     & \multirow{2}*{9}     & \multirow{2}*{0.0024}
			&  \multirow{2}*{0.0012}   &\multirow{2}*{60.7730} & \multirow{2}*{3.1668e+03} \\
			(74,201376)   & &  &  & & & & & &   \\
			\hline
			space ga9   & \multirow{2}*{50} & \multirow{2}*{4e-3}  & \multirow{2}*{1}
			& \multirow{2}*{4}     & \multirow{2}*{8}  & \multirow{2}*{0.0743}
			& \multirow{2}*{0.0777} & \multirow{2}*{576.1090} & \multirow{2}*{3.9092e+03} \\
			(3107,5005)  & & &  & & & & & &  \\
			\hline
	\end{tabular}}
\end{sidewaystable}

\begin{sidewaystable}
	\caption{Performance of two algorithms on the LIBSVM data set}
	\label{result-lib-l2}
	\centering
	\scalebox{0.85}{
		\begin{tabular}{c|c|c|cc|cc|cc|cc }
			\hline
			\multirow{2}*{Probname}   & \multirow{2}*{ng}     & \multirow{2}*{$\overline{\gamma}$}
			& \multicolumn{2}{ c| }{nnz}   & \multicolumn{2}{ c| }{$\eta_{\rm kkt}$}     &\multicolumn{2}{ c| }{pobj}            & \multicolumn{2}{ c }{Time(s)} \\
			(n,p) &     & & PMM&pADMM  &  PMM&pADMM  &   PMM&pADMM     &  PMM&pADMM \\
			\hline
			E2006.train  & \multirow{2}*{150} & \multirow{2}*{0.9}
			&  \multirow{2}*{1}     & \multirow{2}*{2}     & \multirow{2}*{7.55e-10}
			&  \multirow{2}*{2.13e-04}  & \multirow{2}*{3e-03} & \multirow{2}*{3e-03} & \multirow{2}*{1.176} &\multirow{2}*{24.318}\\
			(16087,150360)  & &    & & & & & &  &  & \\
			\hline
			E2006.test  & \multirow{2}*{150} &  \multirow{2}*{0.9}
			& \multirow{2}*{2}  &  \multirow{2}*{5}  & \multirow{2}*{4.96e-05}     & \multirow{2}*{2.5e-04} & \multirow{2}*{6.5e-03} & \multirow{2}*{6.6e-03} &\multirow{2}*{2.073} & \multirow{2}*{4.55}\\
			(3308,150358) & &    & & & &  & & &  & \\
			\hline
			log1p.E2006.train   & \multirow{2}*{1425} & \multirow{2}*{3} &\multirow{2}*{1}& \multirow{2}*{2} &\multirow{2}*{2.37e-08}  & \multirow{2}*{5.01e-01}
			& \multirow{2}*{5.5e-03}
			&  \multirow{2}*{4.4e-03}   & \multirow{2}*{216.968}     & \multirow{2}*{78.395}\\
			(16087,4272227)  & &   & & & & & &  &  & \\
			\hline
			log1p.E2006.test & \multirow{2}*{1425} &  \multirow{2}*{5} & \multirow{2}*{1}
			&  \multirow{2}*{2}  & \multirow{2}*{1.82e-08}
			&  \multirow{2}*{2.78e-01}  & \multirow{2}*{1.14e-02}     & \multirow{2}*{7.3e-03} &\multirow{2}*{131.983} & \multirow{2}*{19.189} \\
			(3308,4272226)  & &   & & & &  & & &  & \\
			\hline
			abalone7   & \multirow{2}*{21} & \multirow{2}*{1}
			&  \multirow{2}*{3}     & \multirow{2}*{3}     & \multirow{2}*{5.49e-09}
			&  \multirow{2}*{5.4e-03} & \multirow{2}*{0.036}     & \multirow{2}*{0.036}
			&\multirow{2}*{2.951} & \multirow{2}*{6.268}  \\
			(4177,6435)   & &    & & & &  & & &  & \\
			\hline
			bodyfat7  & \multirow{2}*{3876}  & \multirow{2}*{1}
			&  \multirow{2}*{2}     & \multirow{2}*{2}     & \multirow{2}*{5.47e-11}
			&  \multirow{2}*{3.23e-01}  & \multirow{2}*{6.59e-04}     & \multirow{2}*{6.73e-04} & \multirow{2}*{2.074} &\multirow{2}*{0.6430}\\
			(252,116280)  & &    & & & & & &  &  & \\
			\hline
			housing7    & \multirow{2}*{150} &  \multirow{2}*{1}
			&  \multirow{2}*{5}     & \multirow{2}*{5}     & \multirow{2}*{ 2.14e-09}
			&  \multirow{2}*{3.3e-03}     & \multirow{2}*{0.1143}  & \multirow{2}*{0.1143} & \multirow{2}*{5.554} & \multirow{2}*{7.886}\\
			(506,77520)  & &    & & & &   &  & \\
			\hline
			mpg7   & \multirow{2}*{11} &  \multirow{2}*{5}
			&  \multirow{2}*{1}     & \multirow{2}*{3}     & \multirow{2}*{7.81e-06}
			&  \multirow{2}*{5.66e-01} & \multirow{2}*{9.86e-02} & \multirow{2}*{7.27e-01} & \multirow{2}*{64.773} & \multirow{2}*{152.617}\\
			(392,3432)  & &    & & & &  & & &  & \\
			\hline
			pyrim5  & \multirow{2}*{150} &  \multirow{2}*{18}
			&  \multirow{2}*{7}     & \multirow{2}*{7}     & \multirow{2}*{8.21e-09}
			&  \multirow{2}*{5.30e-02}   &\multirow{2}*{8.97e-04} & \multirow{2}*{8.79e-04} & \multirow{2}*{229.067}  & \multirow{2}*{0.182}\\
			(74,201376)   & &    & & & & & &  &  & \\
			\hline
			space ga9   & \multirow{2}*{50} & \multirow{2}*{1}
			& \multirow{2}*{6}     & \multirow{2}*{6}  & \multirow{2}*{2.85e-08}
			& \multirow{2}*{3.3e-03} & \multirow{2}*{1.9e-03} & \multirow{2}*{1.9e-03} &   \multirow{2}*{9.423} & \multirow{2}*{2.572}\\
			(3107,5005)  & &   & & & & & &  &  & \\
			\hline
	\end{tabular}}
\end{sidewaystable}

\bigskip
\section{Conclusions and Prospects}\label{section6}

This paper uses the proximal~MM~algorithm to solve the equivalent surrogate problem of the original group zero-norm regularized composite optimization problem, where the subproblems are solved using the semismooth Newton method. The convergence and convergence rate of the algorithm are analyzed and the statistical error bounds of the iterative sequence and its limits generated by the PMM method with respect to the true solution are constructed. The numerical results on the synthetic data set and the~UCI data set contribute to the understanding of theory of the PMM method and have practical applications for statistics and machine learning.

\section*{Acknowledgements}
The first two authors would like to express their sincere thanks to Prof. Shaohua Pan from the School of Mathematics, South China University of Technology for helpful suggestions on this paper. The authors extend their gratitude to Prof. Defeng Sun from The Hong Kong Polytechnic University for generously sharing the Semismooth Newton code and providing valuable insights regarding the numerical experiment.

\section{Supplementary theorem}

The following lemma states that under a mild condition, the group zero-norm regularized minimization problem has a nonempty global optimal solution set.

\begin{lemma}\label{existence-global-opt-sol}
	Let $A\in\mathbb{R}^{n\times p}$ and $b\in\mathbb{R}^n$ be given, and let $g:\mathbb{R}^n\rightarrow\mathbb{R}$ be an lsc coercive function with ${\rm inf}_{z\in\mathbb{R}^n}g(z)>-\infty$. Then, for any given $\nu>0$, the following problem
	\begin{equation}\label{coercive}
		\mathop{\min}_{x\in\mathbb{R}^p}\{\nu g(Ax-b)+\|{G}(x)\|_0\}
	\end{equation}
	has a nonempty globally optimal solution set.
\end{lemma}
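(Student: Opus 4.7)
The plan is to overcome the central obstacle that $F(x) := \nu g(Ax-b) + \|G(x)\|_0$ need not be coercive on $\mathbb{R}^p$: even though $g$ is coercive on $\mathbb{R}^n$, the matrix $A$ may have a nontrivial kernel, so $x \mapsto g(Ax-b)$ is constant along $\ker A$, and a direct Weierstrass argument does not apply. On the other hand, $\|G(\cdot)\|_0$ is integer-valued and lower semicontinuous (since $x^k \to x$ with $x_{J_i}\ne 0$ forces $x^k_{J_i}\ne 0$ for large $k$), but it does not help with coercivity either. The idea is to reduce the problem to finitely many support-restricted subproblems where coercivity can be recovered on the image under $A$.

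First I would enumerate the $2^m$ group-support patterns $S \subseteq [m]$ and, for each, define the closed subspace $V_S := \{x \in \mathbb{R}^p : x_{J_i}=0 \text{ for all } i \notin S\}$ together with the restricted value $\alpha_S := \inf_{x \in V_S} g(Ax-b)$. To show $\alpha_S$ is attained, I would observe that $A(V_S) - b$ is an affine subspace of $\mathbb{R}^n$, hence closed, and take a minimizing sequence $y^k = Ax^k - b$ with $x^k \in V_S$. Boundedness of $\{g(y^k)\}$ together with coercivity of $g$ forces $\{y^k\}$ to be bounded, so some subsequence converges to $y^* \in A(V_S)-b$; lower semicontinuity of $g$ then yields $g(y^*) \le \alpha_S$, proving attainment. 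Any preimage $x_S^\star \in V_S$ with $Ax_S^\star - b = y^*$ is then a minimizer of the restricted problem.

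Next I would combine these finitely many restricted minimizers. For any $x \in \mathbb{R}^p$ with group support $S(x) := \{i : x_{J_i}\ne 0\}$, we have $x \in V_{S(x)}$, so
\begin{equation*}
F(x) = \nu g(Ax-b) + |S(x)| \ge \nu \alpha_{S(x)} + |S(x)| \ge \min_{S \subseteq [m]}\bigl[\nu \alpha_S + |S|\bigr].
\end{equation*}
Picking $S^\star \in \arg\min_{S} [\nu \alpha_S + |S|]$ (which exists since there are only $2^m$ candidates) and setting $x^\star := x_{S^\star}^\star$, we have $S(x^\star) \subseteq S^\star$, hence
\begin{equation*}
F(x^\star) = \nu \alpha_{S^\star} + |S(x^\star)| \le \nu \alpha_{S^\star} + |S^\star| = \min_{S}\bigl[\nu \alpha_S + |S|\bigr].
\end{equation*}
Combining these two inequalities shows $F(x^\star) = \inf_x F(x)$, so the global optimal set is nonempty.

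The only real obstacle is step two, namely establishing attainment of $\alpha_S$ despite the absence of coercivity on $\mathbb{R}^p$; the trick of transporting the minimizing sequence to its image $y^k = Ax^k - b$ and exploiting closedness of the affine subspace $A(V_S)-b$ together with coercivity and lower semicontinuity of $g$ resolves this cleanly, after which the remaining combinatorial reduction over the finite collection $\{S \subseteq [m]\}$ is straightforward.
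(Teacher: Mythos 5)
Your proof is correct, but it is organized quite differently from the paper's. The paper runs a single minimizing sequence for the full objective, splits the groups into those along which the sequence is unbounded ($I$) and the rest, uses coercivity of $g$ to force boundedness of $z^k=Ax^k-b$, and then invokes closedness of $A_K(\mathbb{R}^{|K|})$ to replace the runaway coordinates by a bounded preimage $u^*$ whose zero-norm contribution is at most $|I|$; lower semicontinuity of $g$ and of the surviving sign terms then closes the argument. You instead enumerate all $2^m$ group supports up front, show each restricted infimum $\alpha_S=\inf_{x\in V_S}g(Ax-b)$ is attained by pushing the minimizing sequence to the image space (where coercivity of $g$ gives boundedness and closedness of the affine set $A(V_S)-b$ gives a preimage), and then take the best of finitely many candidates via the two-sided bound $F(x^\star)\le\min_S[\nu\alpha_S+|S|]\le\inf_x F(x)$. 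Both arguments rest on the same two technical pillars --- boundedness of $Ax^k-b$ from coercivity of $g$, and closedness of linear images in finite dimensions --- but your support-enumeration removes the case split and the subsequence bookkeeping over unbounded groups, at the cost of an exponential (purely proof-level) enumeration; the paper's single-sequence argument is more economical in that respect and adapts more directly to settings where the number of groups is not treated as fixed. The one step worth stating explicitly in your write-up is that $\alpha_S$ is finite (it is bounded above by $g(-b)$ since $0\in V_S$, and below by $\inf g$), which is what makes the boundedness-of-$\{y^k\}$ argument go through.
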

\begin{proof}
	Notice that the objective function of \eqref{coercive} is lower bounded. So, it has an infimum, say $\alpha^*$. Then there exists a sequence $\{x^k\}\subset\mathbb{R}^p$ such that for each $k$,
	\begin{equation}\label{c1}
		\nu g(Ax^k-b)+\|{G}(x^k)\|_0\leq\alpha^*+\frac{1}{k}~~\forall~k.
	\end{equation}
	If the sequence $\{x^k\}$ is bounded, then by letting $\overline{x}$ be an arbitrary limit point of $\{x^k\}$ and using the lower semicontinuity of $x\rightarrow g(Ax-b)$ and $\|{G}(\cdot)\|_0$, we have
	\begin{equation}\label{c2}
		\nu g(A\overline{x}-b)+\|{G}(\overline{x})\|_0\leq\alpha^*.
	\end{equation}
	This shows that $\overline{x}$ is a global optimal solution of the problem \eqref{coercive}. We next consider the case that $\{x^k\}$ is unbounded. For convenience, we define the following two disjoint index sets:
	\begin{align*}
	&I:=\{i\in\{1,2,\ldots,m\}|\{x_{J_i}^k\} {\rm is~unbounded}\}\\
	&{\rm and}~\overline{I}:=\{1,2,\ldots,m\}\backslash I.
	\end{align*}
	Together with inequality \eqref{c1}, it immediately follows that
	\begin{equation}\label{c3}
		\nu g(Ax^k-b)+|I|+\sum_{i\in\overline{I}}{\rm sign}(\|x_{J_i}^k\|)\leq\alpha^*+\frac{1}{k}
	\end{equation}
	for all sufficiently large $k$. Since $g(Ax-b)$ is coercive, which means that there exists a bounded sequence $\{z^k\}\subset\mathbb{R}^n$ such that $z^k=Ax^k-b$. Let $K=\bigcup_{i\in I}{J_i}$ and $\overline{K}=\{1,2,\ldots,m\}\backslash K$. Clearly, $A_Kx_K^k=b+z^k-A_{\overline{K}}x_{\overline{K}}^k$. Notice that $\{z^k\}$ and $\{x_{\overline{K}}^k\}$ are bounded. We may assume (taking a subsequence if necessary) that $\{z^k\}$ and $x_{\overline{K}}^k$ are convergent, say, $z^k\rightarrow z^*$ and $x_{\overline{K}}^k\rightarrow\xi^*$. Notice that for each $k$, $x_{K}^k$ is a solution of the system $A_{K}y=b+z^k-A_{\overline{K}}x_{\overline{K}}^k$. That is, $\{b+z^k-A_{\overline{K}}x_{\overline{K}}^k\}\subset A_{K}(\mathbb{R}^{|K|})$. Together with the closeness of the set $A_{K}(\mathbb{R}^{|K|})$, we have $b+z^*-A_{\overline{K}}\xi^*\in A_K(\mathbb{R}^{|K|})$. So, there exists $u^*\in\mathbb{R}^{|K|}$ such that $A_Ku^*=b+z^*-A_{\overline{K}}\xi^*$, i.e., $A_Ku^*+A_{\overline{K}}\xi^*-z^*=b$. Now passing the limit to \eqref{c3} and using $z^k=Ax^k-b$, it yields
	\[
	\nu g(z^*)+|I|+\sum_{i\in \overline{I}}{\rm sign}(\|\hat{\xi}^*_{J_i}\|)\leq\alpha^*,
	\]
	where $\hat{\xi}^*\in\mathbb{R}^P$ with $\hat{\xi}^*_K=0$ and $\hat{\xi}^*_{\overline{K}}={\xi}^*$. Let $\hat{u}^*\in\mathbb{R}^P$ with $\hat{u}_K^*=u^*$ and $\hat{u}_{\overline{K}}^*=0$. Along with $\nu g(A_{K}u^*+A_{\overline{K}}\xi^*-b)+\|{G}(\hat{u}^*)\|_0+\|{G}(\hat{\xi}^*)\|_0\leq\nu g(z^*)+|I|+\sum_{i\in \overline{I}}{\rm sign}(\|\hat{\xi}_{J_i}^*\|)$, we conclude that $x^*$ with $x_K^*=u^*$ and $x_{\overline{K}}^*=\xi^*$ is a global optimal solution of \eqref{coercive}.
\end{proof}

\bibliographystyle{IEEEtran}

\begin{thebibliography}{99}
\bibitem{Tang2019}
{\sc P.\ Tang, C.\ Wang, D.\ Sun, and K. C.\ Toh},
{\em A sparse semismooth newton based proximal majorization-minimization algorithm for nonconvex square-root-loss regression problems}, 2019.
%
%
%
%
%
%
%

\bibitem{Attouch10}
{\sc H.\ Attouch, J.\ Bolte, P.\ Redont, A.\ Soubeyran},
{\em Proximal alternating minimization and projection methods for nonconvex problems: an approach based on the Kurdyka-{\L}ojasiewicz inequality[J]}, Mathematics of Operations Research, 2010, 35: 438-457.

\bibitem{Attouch-Bolte09}
{\sc H.\ Attouch, J.\ Bolte},
{\em On the convergence of the proximal algorithm for nonsmooth functions involving analytic features[J]}, Mathematical Programming: Series A and B, 2009, 116(1-2): 5-16.

\bibitem{Pan-Liang-Liu2023}
{\sc Shaohua Pan, Ling Liang, Yulan Liu},
{\em Local optimality for stationary points of group zero-norm regularized problems and equivalent surrogates[J]}, Optimization: A Journal of Mathematical Programming and Operations Research 2023, 72(9): 2311-2343.


\bibitem{Ahsen2017}
{\sc M.\ E.\ Ahsen, M.\ Vidyasagar},
{\em Error bounds for compressed sensing algorithms with
	group sparsity: A unified approachy[J]}, Applied and Computational Harmonic Analysis, 2017, 43(2): 212-232.






%
%
%
\bibitem{Bradley98}
{\sc P.\ S.\ Bradley, O.\ L.\ Mangasarian},
{\em Feature selection via concave minimization and support vector machines[J]},
In Proceeding of international conference on machine learning ICML, 1998.


%
%
%
\bibitem{Beck19}
{\sc A.\ Beck, N.\ Hallak},
{\em Optimization problems involving group sparsity terms[J]},
Mathematical Programming, 2019, 178: 39-67.

\bibitem{Chartrand07}
{\sc R.\ Chartrand},
{\em Exact reconstruction of sparse signals via nonconvex minimization},
IEEE Signal Processing Letters, 14(2007): 707-710.

\bibitem{Chen10}
{\sc X. J.\ Chen, F. M.\ Xu and Y. Y.\ Ye},
{\em Lower bound theory of nonzero entries in solutions of $\ell_2$-$\ell_p$ minimization},
SIAM Journal on Scientific Computing, 32(2010): 2832-2852.


\bibitem{Deng13}
{\sc W.\ Deng, W. T.\ Yin and Y.\ Zhang},
{\em Group sparse optimization by alternating direction method},
Proceedings of SPIE the International Society for Optical Engineering, vol. 8858.

\bibitem{Fan01}
{\sc J. Fan and R. Li},
{\em Variable selection via nonconcave penalized likelihood and its oracle properties},
Journal of the American Statistical Association, 9(2001): 1348-1360.

\bibitem{Guo15}
{\sc X.\ Guo, H.\ Zhang, Y.\ Wang and J. L.\ Wu},
{\em Model selection and estimation in high dimensional regression models with group SCAD},
Statistics \& Probability Letters, 103(2015): 86-92.


\bibitem{Hu17}
{\sc Y. Hu, C. Li, K. Meng, J. Qin and X. Yang},
{\em Group sparse optimization via $l_{p,q}$ regularization},
Journal of Machine Learning Research, 18(2017): 1-52.


\bibitem{Ling13}
{\sc Q.\ Ling, Z. W.\ Wen and W. T.\ Yin},
{\em Decentralized jointly sparse signal recovery by reweighted $l_q$ minimization},
IEEE Transactions on Signal Processing, 61(2013): 1165-1170.


\bibitem{Meier08}
{\sc L.\ Meier, S.\ van De Geer and P.\ B\"{u}hlmann},
{\em The group lasso for logistic regression},
Journal of the Royal Statistical Society Series B, 70(2008): 53-71.


\bibitem{Rinaldi10}
{\sc F.\ Rinaldi, F.\ Schoen and M.\ Sciandrone},
{\em Concave programming for minimizing the zero-norm over polyhedral sets},
Computation Optimization and Applications, 46(2010): 467-486.



\bibitem{YangZou15}
{\sc Y.\ Yang and H.\ Zou},
{\em A fast unified algorithm for solving group-lasso penalize learning problems},
Statistical Computation, 15(2015): 1129-1141.

\bibitem{Zhang10}
{\sc C. H.\ Zhang},
{\em Nearly unbiased variable selection underminimax concave penalty},
The Annals of Statistics, 38(2010): 894-942.




\bibitem{Bolte14}
{\sc J.\ Bolte, S.\ Sabach, M.\ Teboulle},
{\em Proximal alternating linearized minimization for nonconvex and nonsmooth problems[J]}, Mathematical Programming, 2014, 146(1-2): 459-494.








\bibitem{Chang2011}
{\sc C.\ C.\ Chang, C.\ J.\ Lin},
{\em LIBSVM: A library for support vector machines[J]}, ACM Transactions on Intelligent Systems and Technology, 2007.


%
%



\bibitem{Davenport2011}
{\sc M.\ A.\ Davenport, M.\ F.\ Duarte, Y.\ C.\ Eldar, G.\ Kutyniok},
{\em Introduction to compressed sensing[J]}, Preprint, 2011: 1-68.

\bibitem{Duarte2011}
{\sc M.\ F.\ Duarte, Y.\ C.\ Eldar},
{\em Structured compressed sensing: from theory to applications[J]}, IEEE Transactions on Signal Processing, 2011, 59(9): 4053-4085.





\bibitem{David21}
{\sc D.\ David},
{\em Sparse group fused lasso for model segmentation: a hybrid approach[J]},
Advances in Data Analysis and Classification, 2021, 15: 625-671.








\bibitem{Elad2010}
Elad M..
Sparse and Redundant Representations: from Theory to Applications in Signal and Image Processing[J], Springer, Berlin, 2010.



\bibitem{Eldar09}
{\sc Y.\ C.\ Eldar, M.\ Mishali},
{\em Robust recovery of signals from a structured union of subspaces[J]},
IEEE Transactions on Information Theory, 2009, 55: 5302-5316.

\bibitem{Fan2014}	
{\sc J.\ Q.\ Fan, L.\ Xue, H.\ Zou},
{\em Strong oracle optimality of folded concave penalized
	estimation[J]}, Annals of Statistics, 2012, 42(3): 819-849.	



\bibitem{Gong13}
{\sc P.\ Gong, J.\ Ye, C.\ Zhang},
{\em Multi-stage multi-task feature learning,
	Journal of Machine Learning Research[J]}, 2013, 14: 2979-3010.





\bibitem{Huang10}
{\sc J.\ Huang, T.\ Zhang},
{\em The benefit of group sparsity[J]},
The Annals of Statistics, 2010, 38: 1978-2004.

\bibitem{Huang2010}
{\sc L.\ Huang, J.\ Jia, B.\ Yu, B.\ G.\ Chun, P.\ Maniatis, M.\ Naik},
{\em Predicting execution time of computer programs using sparse polynomial regression[J]}, in Advances in Neural Information Processing Systems, 2010: 883-891.

\bibitem{Hiriart84}
{\sc J.\ B.\ Hiriart-Urruty, J.\ J.\ Strodiot, V.\ H.\ Nguyen},
{\em Generalized Hessian matrix and second-order optimality conditions for problems with $C^{1,1}$ data[J]}, Applied Mathematics and Optimization, 1984, 11(1): 169-180.







\bibitem{Ioffe08}
{\sc A.\ D.\ Ioffe, J.\ V.\ Outrata},
{\em On metric and calmness qualification conditions in subdifferential calculus[J]},
Set-Valued Analysis, 2008, 16: 199-227.



\bibitem{Jenatton2011}
{\sc R.\ Jenatton, J.\ Audibert, F.\ Bach},
{\em Structured variable selection with sparsity-inducing norms[J]}, Journal of Machine Learning Research, 2011, 12(10): 2777-2824.






\bibitem{LiPong18}
{\sc G.\ Y.\ Li, T.\ K.\ Pong},
{\em Calculus of the exponent of Kurdyka-{\L}ojasiewicz inequality and its applications to linear convergence of first-order methods[J]}, Foundations of Computational Mathematics, 2018, 18: 1199-1232.


\bibitem{Lichman}
{\sc M.\ Lichman},
{\em UCI Machine Learning Repository[J]}, http://archive.ics.uci.edu/ml/
datasets.html.

\bibitem{LiuBiPan18}
{\sc Y.\ L.\ Liu, S.\ J.\ Bi, S.\ H.\ Pan},
{\em Equivalent Lipschitz surrogates for zero-norm and rank optimization problems[J]},
Journal of Global Optimization, 2018, 72(4): 679-704.


\bibitem{Li2018}
{\sc X. D.\ Li, D. F.\ Sun, K. C.\ Toh},
{\em A highly efficient semismooth Newton augmented Lagrangian method for solving Lasso problems[J]}, SIAM Journal on Optimization, 2018, 28: 433-458.

\bibitem{Li2015}
{\sc Y.\ Li, B.\ Nan, and J.\ Zhu},
{\em Multivariate sparse group lasso for the multivariate multiple
	linear regression with an arbitrary group structurey[J]}, Biometrics,
2015, 71(2): 354-363.


\bibitem{Loh2015}
{\sc P. L.\ Loh and M. J.\ Wainwright},
{\em Regularized M-estimators with nonconvexity:
	Statistical and algorithmic theory for local optima[C]}, Journal of Machine Learning Research, 2015, 16: 559-616.



\bibitem{Meier08}
{\sc L.\ Meier, S.\ van De Geer, P.\ B\"{u}hlmann},
{\em The group lasso for logistic regression[J]},
Journal of the Royal Statistical Society Series B, 2008, 70: 53-71.



\bibitem{Obozinski10}
{\sc G.\ Obozinski, B.\ Taskar, M. I.\ Jordan},
{\em Joint covariate selection for grouped classification[J]},
Statistics and Computing, 2010, 20: 231-252.



\bibitem{Poignard2018}
{\sc B.\ Poignard},
{\em Asymptotic theory of the adaptive sparse group lasso[J]}, Annals of the Institute
of Statistical Mathematics, 2018: 1-32.


\bibitem{Rao2013}
{\sc N.\ Rao, C.\ Cox, R.\ Nowak, and T. T.\ Rogers},
{\em Sparse overlapping sets lasso for multitask
	learning and its application to fmri analysis[J]}, in Advances in neural information processing
systems, 2013: 2202-2210.

\bibitem{Rao2015}
{\sc N.\ Rao, R.\ Nowak, C.\ Cox, and T.\ Rogers},
{\em Classification with the sparse group lassoy[J]},
IEEE Transactions on Signal Processing, 2015, 64(2): 448-463.


\bibitem{Roc76}
{\sc R. T.\ Rockafellar},
{\em Augmented Lagrangians and applications of the proximal point algorithm in convex programming[J]},
Mathematics of Operations Research, 1976, 1: 97-116.

\bibitem{Roc70}
{\sc R. T.\ Rockafellar},
{\em Convex Analysis[M]}, Princeton University Press, 1970.

\bibitem{RW98}
{\sc R. T.\ Rockafellar, R. J.\ B.\ Wets},
{\em Variational Analysis[M]}, Springer, 1998.



\bibitem{Stojnic09}
{\sc M.\ Stojnic, F.\ Parvaresh, B.\ Hassibi},
{\em On the reconstruction of block-sparse signals with an optimal number of measurements[J]},
IEEE Transactions on Signal Processing, 2009, 57: 3075-3085.



\bibitem{Steinhaeuser2012}
{\sc K.\ Steinhaeuser, A.\ Banerjee, S.\ Chatterjee, and A.\ Ganguly},
{\em Sparse group
	lasso: Consistency and climate applications[J]}, in Proceedings of the 2012 SIAM International
Conference on Data Mining, SIAM, 2012: 47-58.




\bibitem{Tropp2010}
{\sc J. A.\ Tropp, S. J.\ Wright},
{\em Computational methods for sparse solution of linear inverse problems[J]}, Proceedings of the IEEE, 2010, 98(6): 948-958.











\bibitem{WuPanBi21}
{\sc Y. Q.\ Wu, S. H.\ Pan, S. J.\ Bi},
{\em Kurdyka-{\L}ojasiewicz property of zero-norm composite functions[J]}, Journal of Optimization Theory and Applications, 2021, 188: 94-112.

\bibitem{Yu21}
{\sc P. R.\ Yu, G. Y.\ Li, T. K.\ Pong},
{\em Kurdyka-{\L}ojasiewicz exponent via inf-projection[J]}, Foundations of Computational Mathematics, 2022, 22: 1171-1217.


\bibitem{Yuan06}
{\sc M.\ Yuan, Y.\ Lin},
{\em Model selection and estimation in regression with grouped variables[J]},
Journal of The Royal Statistical Society Series B, 2006, 68: 49-67.

\bibitem{Yuan12}
{\sc X. T.\ Yuan, X.\ Liu, S.\ Yan},
{\em Visual classification with multitask joint sparse representation[J]},
IEEE Transactions on Image Processing, 2012, 21: 4349-4360.


\bibitem{ZhangGY08}
{\sc J.\ Zhang, Z.\ Ghahramani, Y.\ Yang},
{\em Flexible latent variable models for multi-task learning[J]},
Machine Learning, 2008, 73: 221-242.



\bibitem{ZhaoSunToh2010}
{\sc X. Y.\ Zhao, D.\ F.\ Sun, K. C.\ Toh},
{\em A Newton-CG augmented Lagrangian method for semidefinite
	programming[J]}, SIAM Journal on Optimization, 2010, 20: 1737-1765.

\bibitem{Zhang2020}
{\sc Y. J.\ Zhang, N.\ Zhang, D. F.\ Sun, K. C.\ Toh},
{\em An efficient Hessian based algorithm for solving large-scale sparse group Lasso problems[J]}, Mathematical Programming, 2020, 179: 223-263.

\bibitem{Zhang-Pan-Bi-Sun23}
{\sc D. D.\ Zhang, S. H.\ Pan, S. J.\ Bi, D. F.\ Sun},
{\em Zero-norm regularized problems: equivalent surrogates, proximal MM method and statistical error bound[J]}, Computational Optimization and Applications, 2023.

\bibitem{Zhang-Zhang2012}	
{\sc C. H.\ Zhang, T.\ Zhang},
{\em A general theory of concave regularization for highdimensional
	sparse estimation problems[J]}, Statistical Science, 2012, 27: 576-593.

\end{thebibliography}

\newpage
\onecolumn
\appendices







\end{document}